\numberwithin{equation}{section}
\newtheorem{thm}{Theorem}[section]
\newtheorem{lemma}[thm]{Lemma}
\newtheorem{alg}[thm]{Algorithm}
\newtheorem{remark}[thm]{Remark}
\newtheorem{assumption}[thm]{Assumption}
\newcommand{\nr}[1]{\ensuremath{\left\|{#1} \right\|}}
\newcommand{\be}{\begin{equation}}
\newcommand{\ee}{\end{equation}}
\newcommand{\bea}{\begin{eqnarray}}
\newcommand{\eea}{\end{eqnarray}}
\newcommand{\beas}{\begin{eqnarray*}}
	\newcommand{\eeas}{\end{eqnarray*}}
\newcommand{\vertiii}[1]{{\left\vert\kern-0.25ex\left\vert\kern-0.25ex\left\vert #1
		\right\vert\kern-0.25ex\right\vert\kern-0.25ex\right\vert}}
\newcommand{\normiii}[1]{{\left\vert\kern-0.25ex\left\vert\kern-0.25ex\left\vert #1
		\right\vert\kern-0.25ex\right\vert\kern-0.25ex\right\vert}}
\date{}
\begin{document}
\title{A simple-to-implement nonlinear preconditioning of Newton’s
method for solving the steady Navier-Stokes equations}

\author{ 	
Muhammad Mohebujjaman\thanks{Department of Mathematics, University of Alabama at Birmingham, AL 35294, USA, mmohebuj@uab.edu; Partially supported by the National Science Foundation grant DMS-2425308.}
\and
Mengying Xiao\thanks{Department of Mathematics and Statistics, University of West Florida, Pensacola, FL 32514, mxiao@uwf.edu.}
\and
Cheng Zhang \thanks{Department of Mechanical Engineering, University of West Florida, Pensacola, FL 32514, czhang@uwf.edu.}
	}

	\maketitle
	
	\begin{abstract}{
	Newton’s method for solving stationary Navier-Stokes equations (NSE) is known for its fast convergence; however, it may fail when provided with a poor initial guess. This work presents a simple-to-implement nonlinear preconditioning technique for Newton’s iteration that retains quadratic convergence and expands the domain of convergence. The proposed AAPicard-Newton method adds an Anderson accelerated Picard step at each iteration of Newton’s method for solving NSE. This approach has been shown to be globally stable with a relaxation parameter $\beta_{k+1} \equiv 1$ in the Anderson acceleration optimization step, converges quadratically, and achieves faster convergence with a small convergence rate for large Reynolds numbers. Several benchmark numerical tests have been carried out and are well aligned with the theoretical results. 
		}
\end{abstract}

{\bf Key words: } Nonlinear preconditioning, Newton's method, Navier-Stokes equations, Anderson acceleration.

Mathematics Subject Classifications (2000): 65N12, 65N15, 65N30, 76D05

	\section{Introduction}
	
	The incompressible Navier-Stokes equations (NSE) are governed by  the form
	\begin{equation}\label{NS}
		\left\{\begin{aligned}
			-\nu \Delta u+u\cdot\nabla u+ \nabla p&={f} \quad \text{in}~\Omega,\\
			\nabla\cdot {u}&=0\quad \text{in}~\Omega,
		\end{aligned}\right.
	\end{equation}
	on a domain $\Omega \subset \mathbb{R}^d, d=2,3$. Here, $u$ is the velocity of the fluid, $p$ is the pressure, $\nu$ is the kinematic viscosity of the fluid,  and ${f}$ is an external forcing term. The parameter $Re:=\frac{1}{\nu}$ represents the Reynolds number, which characterizes the complexity of the fluid problem.  This work focuses on nonlinear solvers for the steady system defined by \eqref{NS} with homogeneous Dirichlet boundary conditions; however, with some extra work, the results can also be extended to solve the time-dependent NSE at a fixed temporal discretization step, as well as non-homogeneous mixed Dirichlet/Neumann boundary conditions. 
	
Finding the NSE solution efficiently and accurately is an open problem, as the NSE is
notorious for being one of the hard nonlinear differential equations for which analytical solutions are often unavailable. In practice, the nonlinear problem may exhibit poor convergence
rates or be sensitive to the choice of initial conditions, as seen with methods like Picard and Newton methods. The Picard method is known for its global stability if a unique solution exists. Specifically, it guarantees convergence to a solution regardless of the initial guess. Moreover, for sufficiently small problem data, the Picard is globally convergent with a linear convergence rate smaller than 1 \cite{GR86, PRX19}. However, this can be seen as a limitation in practical applications due to the convergence rate being close to 1. On the other hand, Newton's iteration demonstrates local quadratic convergence for small data and a sufficiently close initial guess \cite{GR86, LHRV23}. 
Common alternative strategies involve using  methods with a larger domain of convergence initially (such as Picard, Anderson accelerated Picard methods \cite{EPRX20, PRX19} or damped Newton \cite{NWbook06}) and subsequently switching to Newton's method once the iterates are sufficiently  close to the solution \cite{John16}.

Nonlinear preconditioning is a widely adopted technique in many fields \cite{G01, NSK16, YHC16}, including computational fluid dynamics, structural analysis, and optimization problems, where nonlinearities are prevalent. This technique enhances the efficiency and robustness of numerical methods for solving nonlinear problems by converting the nonlinear problem into better-conditioned forms. Several works \cite{CK02, DGKKM16, LKK18} have  focused on
nonlinear preconditioning for Newton's method, leading to faster convergence and a larger domain of convergence. For instance, \cite{PRTX24} proposes a nonlinear preconditioning of Newton's method by adding a Picard step at each iteration of Newton's method, called the
Picard-Newton method (P-N), and is in the form of\\

%\begin{itemize}
\noindent Step 1. Finding $(\tilde u_{k+1} , \tilde p_{k+1})$ such that
\begin{align*}
-\nu \Delta \tilde u_{k+1}+u_k\cdot\nabla \tilde u_{k+1}+ \nabla \tilde p_{k+1}&={f}, \\
		\nabla\cdot {\tilde u}_{k+1}&=0. 
	\end{align*}
Step 2. Finding $(u_{k+1}, p_{k+1})$ such that
\begin{align*}
	-\nu \Delta u_{k+1}+ \tilde u_{k+1}\cdot\nabla u_{k+1}+ u_{k+1}\cdot\nabla \tilde u_{k+1} + \nabla p_{k+1} &={f} + \tilde u_{k+1} \cdot\nabla \tilde u_{k+1}, \\
		\nabla\cdot {u}_{k+1}&=0.
\end{align*}
%\end{itemize}
We denote it by $u_{k+1} = g_{PN}(u_k) = g_N(g_P(u_k))$.  In Step 1, we apply the standard Picard iteration, using the previous velocity $u_k$. In Step 2, we utilize the usual Newton's method with the intermediate velocity $\tilde u_{k+1}.$ Thus, the P-N method linearizes the nonlinear system \eqref{NS}, though it requires two linear solves at each iteration. The P-N method is advantageous because it is easy to implement, globally stable, quadratically convergent, and has an extended convergence radius compared to the usual Newton method. The 2D lid-driven cavity numerical experiment in \cite{PRTX24} indicates that this 2-step method can converge  for significantly higher Reynolds numbers ($Re \le 12000$) than the  Picard method ($Re \le 4000$), Newton's method ($Re \le 2500$), or Newton with line search ($Re \le 3000$). This technique has been applied to other nonlinear partial differential equations; see \cite{FHHR25}.

Our work is inspired by \cite{PRTX24} and aims to further improve the performance of the Newton method by adding an Anderson accelerated Picard (AAPicard) preconditioning step \cite{Anderson65, EPRX20}. Anderson acceleration (AA) adds an easy-to-implement optimization step of Picard solutions in the iteration, leading to solid convergence results from recent work \cite{EPRX20, PR21, RX23}. AA improves the convergence of linearly convergent methods by slightly reducing the convergence rate and adjusting the convergence order of sublinearly and/or superlinearly convergent methods from $r$ to $\frac{r+1}2$. Specifically, for the NSE, \cite{PRX19, X23} found that AA enhances the Picard iteration with a small decrease in the convergence rate while decelerating Newton's method to a superlinear convergence order of $1.5$. Thus, it is reasonable to apply AA solely to the Picard step to obtain optimal performance, and we refer to this approach as the AAPicard-Newton method. We establish that the AAPicard-Newton method is globally stable, quadratically convergent,
and optimizing convergent performance when $\beta_{k+1}\equiv 1$. Our theoretical results, along with numerical tests, indicate that the AAPicard-Newton method significantly improves the convergence for large Reynolds numbers. For example, the 2D lid-driven cavity solved by AAPicard-Newton method converges for Reynolds number $Re \le 20,000$ and the 3D lid-driven cavity test for $ Re\le 3000$. These values are higher than $Re \le 12,000$ and $Re\le 1800$, respectively, for the results obtained using the P-N method.

The rest of the paper is arranged as follows. Section 2 provides mathematical preliminaries for a smoother analysis to follow. Section 3 analyzes the AAPicard-Newton method with various depths $m$ and relaxation parameter $\beta$. Section 4 presents several benchmark numerical experiments to verify the theoretical results. Section 5 discusses the conclusion and future study.

\section{Mathematical preliminaries}

We denote the natural function spaces for the NSE by 
	\begin{align}
		&Q:=\{v\in {L}^2(\Omega): \int_{\Omega}v\ dx=0\},\\
		& X:=
		\{v\in H^1\left(\Omega\right): v=0~~\text{on}~ \partial\Omega\},\\
		& V:=
		\{v\in X:  (\nabla \cdot v,q)=0\ \forall q\in Q\},
	\end{align}
	where $\Omega$ is an open connected set. The $L^2$ inner product and norm are denoted by $(\cdot,\cdot)$ and $\|\cdot\|$, respectively.  The notation $\langle \cdot,\cdot\rangle$,  defined as $\langle f,v\rangle = \int_\Omega f v\ dx,$ is used to represent the duality between $H^{-1}$ and $X$, and $\|\cdot\|_{-1}$ denotes the norm on $H^{-1}$.
		
 We define the nonlinear term: for all $v,w,z\in X,$
\[
b^*(v,w,z) = (v\cdot\nabla w,z) + \frac12 ((\nabla \cdot v)w,z).
\]
Alternative energy-conserving formulations of the nonlinear term, including the rotational form and the EMAC (Energy, Momentum, and Angular Momentum Conserving) scheme \cite{CHOR17,GS98,OR20}, can be utilized in this framework. A key characteristic of $ b^*$ is its skew-symmetric property \cite{temam}, which is defined by the equation:
\begin{align}
b^*(u,v,v) = 0,
\label{bsym}
\end{align}
for all $ u, v \in X $. If the first argument of $ b^* $ meets the condition $ \|\nabla \cdot u\| = 0 $, then the skew-symmetry is not required and does not influence the outcome. However, when certain finite element subspaces of $ X $ and $ Q $ are employed—like Taylor-Hood elements—it's possible that the discretely divergence free space does not produce entirely divergence-free functions. Therefore, to ensure generality and address all scenarios, we incorporate skew-symmetry in our analysis.
 The following inequality holds for all $v,w,z\in X$
\begin{align}
b^*(v,w,z)&\leq M \|\nabla v\|\|\nabla w\|\|\nabla z\|, \label{bstarbound}
	\end{align}
where $M$ dependent only on the domain $\Omega$, see \cite{laytonBook,temam}.

We present two inequalities in a Hilbert space $X$. For any vectors $u,v\in X$, we have the polarization identity \cite{S96} 
 \begin{align}
\label{eqn:polar}
2(u,v) &= \|u\|^2 + \|v\|^2 -\|u-v\|^2, \end{align}
and Young's inequality \cite{laytonBook}
\begin{align}
\label{eqn:youn}
(u,v) &\le \frac{\epsilon}2 \|u\|^2 + \frac{1}{2\epsilon} \|v\|^2,
\end{align}
for any $\epsilon>0.$

\subsection{NSE preliminaries}

Here we present the weak form \cite{temam} of the NSE \eqref{NS}, given by: Find $u\in V$ satisfying 
\begin{equation}
\nu(\nabla u,\nabla v) + b^*(u,u,v) = \langle f,v\rangle, \ \forall v\in V. \label{weakNS}
\end{equation}

It is well known that for any $f\in H^{-1}(\Omega)$ and $\nu>0$, the weak steady NSE system \eqref{weakNS} is well-posed \cite{GR86,laytonBook} if  the small data condition is satisfied \[
\sigma:=M\nu^{-2} \|f \|_{-1}<1,
\] and that any solution to \eqref{NS} or  \eqref{weakNS} satisfies
\begin{equation}
\| \nabla u \| \le \nu^{-1} \| f \|_{-1}  
, \label{nsstab}
\end{equation}
 by setting $v =u$ in equation \eqref{weakNS} and applying \eqref{bsym} and H\"older's inequality. 
For the rest of this paper, we assume $\sigma <1$. Of course, all works stated below are globally applicable and can be extended to $\sigma >0$ for local results with extra work (see \cite{laytonBook} Chapter 6.4, \cite{PRTX24}).

 The newly introduced, simple-to-implement, 2-step iterative method  -- Picard-Newton method \cite{PRTX24} is stated below.
\begin{alg}[Picard-Newton method \cite{PRTX24}]
\label{alg:pn}
The Picard-Newton method consists of applying the composition of the Newton and  Picard iteration for solving  Navier-Stokes equations: $g_{N}\circ g_{P}$, i.e.,
\begin{enumerate}[Step 1:]
\item[Step 1:] Find $\hat u_{k+1} = g_P(u_{k})$ by finding $\hat  u_{k+1}\in V$ 
satisfying  for all $v\in V$
\begin{equation}
\nu(\nabla \hat u_{k+1},\nabla v) + b^*(u_k,\hat u_{k+1},v) = \langle f,v \rangle.
 \label{wp}
\end{equation}
\item[Step 2:]  Find $u_{k+1} = g_N(\hat u_{k+1})$  by finding $u_{k+1}\in V$ satisfying  for all $v\in V$
\begin{align}
\label{wn}
\nu(\nabla u_{k+1}, \nabla v) +b^*( \hat u_{k+1}, u_{k+1}, v) + b^*(u_{k+1} , \hat u_{k+1}, v) - b^*(\hat u_{k+1}, \hat u_{k+1}, v) =  \langle f,v \rangle. 
\end{align}
\end{enumerate}
\end{alg}
\cite{PRTX24} has manifested that Algorithm \ref{alg:pn} is globally stable,
 \begin{align}
\label{pnstable}
 \|\nabla \hat u_k\|\le \nu^{-1}\|f\|_{-1}, \quad  \|\nabla  u_{k+1}\| \le & \frac{1+\sigma}{1-\sigma}\nu^{-1}\|f\|_{-1}.  \end{align}
  quadratically convergent for $\sigma<1$, and has a larger domain of convergence than the usual Newton's method.  Similar local convergence results can be derived for $\sigma \ge 1$ and any given data $f$, see \cite{PRTX24}.

%{\color{red} Xiao says: The bound of $u_{k}$ is incorrect in [18].
%\begin{proof}
%Let $v = \hat u_{k+1}$ in \eqref{wp} eliminates the second term and applying Cauchy-Schwarz inequality gives
%\begin{align*}
%\nu \| \nabla \hat u_{k+1}\| \le \|f\|_{-1}.
%\end{align*}
%Then let $v =u_{k+1}$ in \eqref{wn} eliminates the second term and using \eqref{bstarbound}, Cauchy-Schwarz inequalities yields
%\begin{align*}
%\nu\|\nabla u_{k+1}\|^2 \le \|f\|_{-1}\|\nabla u_{k+1}\| +M\|\nabla u_{k+1}\|^2 \|\nabla \hat u_{k+1}\| +M\|\nabla u_{k+1}\| \|\nabla \hat u_{k+1}\|^2,
%\end{align*}
%which reduces to 
%\begin{align*}
%\nu(1-\nu^{-2}M\|f\|_{-1}) \|\nabla u_{k+1}\| \le& \|f\|_{-1} +\nu^{-2}M \|f\|_{-1}^2, \\
%\|\nabla  u_{k+1}\| \le & \frac{1+\alpha}{1-\alpha}\nu^{-1}\|f\|_{-1}.
%\end{align*}
%We complete the proof.
%\end{proof}
%}

\subsection{Anderson acceleration preliminary}

Anderson acceleration has recently been shown to enhance the convergence of linearly converging fixed point methods, such as the Picard method for NSE \cite{EPRX20,PR21,PRX19}, and reduce the asymptotic convergence order of the Newton's method for NSE \cite{RX23, X23}. It is optimal to apply AA solely to the Picard step of the Picard-Newton method \cite{PRTX24}.

Given a fixed point function $g:X\rightarrow X$ with $X$ a Hilbert space with norm $\| \cdot \|_X$,
the Anderson acceleration algorithm with depth $m =1,2,3,\dots$ and damping parameters $0 < \beta_{k+1} \le 1$ is given by:
\\ \ \\
Step 0: Choose $x_0\in X.$\\
Step 1: Find $w_1\in X $ such that $w_1 = g(x_0)-x_0$.  
Set $x_1 = x_0 + w_1$. \\
Step $k+1$: For $k=1,2,3,\ldots$ Set $m_k = \min\{ k, m\}.$\\
\indent [a.] Find $w_{k+1} = g(x_k)-x_k$. \\
\indent [b.] Solve the minimization problem for $\{ \alpha_{j}^{k+1}\}_{k-m_k}^k$
\begin{align}\label{eqn:opt-v0}
\min_{\sum_{j=k-m_k}^{k} \alpha_j^{k+1}  = 1} 
\left\| \sum_{j=k-m_k}^{k} \alpha_j^{k+1} w_{j+1} \right\|_X
\end{align}
\indent [c.] For a selected damping factor $0 < \beta_{k+1} \le 1$, set
\begin{align}\label{eqn:update-v0}
x_{k+1} =  \sum_{j= k-m_k}^k \alpha_j^{k+1} x_{j}
 + \beta_{k+1} \sum_{j= k-m_k}^k \alpha_j^{k+1} w_{j+1},
\end{align}
where $w_{k+1}:= g(x_k)-x_k$ represents the stage $k$ residual.  
 
 \begin{remark}
 We assume the $\alpha_j^{k+1}$ are uniformly bounded.  As discussed in \cite{PR21,PR23}, this is equivalent to assuming the full column rank of the matrix with columns $(w_{j+1}-w_j)_{j=k,k-1,...k-m}$ and can be controlled by the length and angle filtering.
 \end{remark}

It is the optimization step that improves the linearly convergent methods, and we define the Anderson gain 
\begin{align}\label{thetadef}
\theta_k := \frac{ \left\| \sum_{j=k-m_k}^{k} \alpha_j^{k+1} w_{j+1} \right\|_X } { \| w_{k+1} \|_X }.
\end{align}
Clearly $0<\theta_k\le 1$. With this, it can be proven that AA improves the linear convergence rate by scaling it by the gain factor $\theta_k$ of the underlying AA optimization problem  \cite{EPRX20,PR21,PRX19} .  
For AA with depth $m$, using the result from Theorem 5.1 of \cite{PR21} we have that
\begin{align}\label{eqn:tgenm}
\nr{w_{k+1}}_X & \le \nr{w_k}_X \Bigg\{
 \theta_k ((1-\beta_{k}) + \kappa_g \beta_{k})
+ C \hat \kappa_g 
   \sum_{n = k-{m_{k-1}}}^{k} 
\nr{w_n}_X
  \Bigg\},
\end{align}
where $\kappa_g$ is the linear convergence rate of the usual fixed point iteration, $\hat \kappa_g$ is the Lipschitz constant of $g'$, $C$ depends on 
relaxation and gain parameters, as well as the degree to which the past $m$ differences $w_{j+1}-w_j$ are linearly independent.
The residual $w_{k+1}$ is bounded by a linear term and several quadratic terms. The linear term will be dominant if the initial guess is good enough, and therefore, AA improves the rate of convergence as $\theta_k<1$ and further enhances the performance if 
 $0<\beta_k < 1$, especially  for large $\kappa_g$. A greater depth results in a smaller $\theta_k$, and accelerates convergence. However, finding the optimal value of $\beta_k$ is case-dependent and should be evaluated through numerical tests.

\section{Analysis of AAPicard-Newton}
\cite{PRTX24} manifests that the Picard-Newton method converges quadratically (globally for $\sigma <1$ and locally for $\sigma>0$). In the following subsections, we will study the convergence behavior of AA with depth $m=1$ applied to this method, and present the general depth case $m= 2,3,\dots$ in the subsequent subsections. To make the work clean, we restrict ourselves to $\sigma<1$, but all results presented below can be extended locally to $\sigma>0$ with some additional effort (see \cite{laytonBook} Chapter 6.4, \cite{PRTX24}), and hence the analysis is omitted here.

\subsection{AAPicard-Newton $m=1$}
We now analyze the AAPicard-Newton method using depth $m=1$.  We begin by formally stating the method.
\begin{alg}[AAPicard-Newton $m=1$]
\label{alg:aapn1}
The AAPicard-Newton iteration with depth $m =1$ consists of applying the composition of the Newton and Anderson accelerated Picard iterations for solving Navier-Stokes equations: $ g_N \circ g_{AP} $, i.e.,
\begin{enumerate}
\item[Step 1:] Find $\tilde u_{k+1} = g_P(u_{k})$ by finding $\tilde  u_{k+1}\in V$ 
satisfying for all $v\in V$
\begin{equation}
\nu(\nabla \tilde u_{k+1},\nabla v) + b^*(u_k,\tilde u_{k+1},v) = \langle f,v \rangle.
 \label{wpaa}
\end{equation}
\item[Step 2:] 
For a selected damping factor $0< \beta_{k+1} \le 1$, set 
  \begin{align}
 \hat u_{k+1} =  & \beta_{k+1} \left( (1-\alpha_{k+1})  \tilde u_{k+1} + \alpha_{k+1} \tilde u_{k}  \right)
 + (1-\beta_{k+1}) \left(   (1-\alpha_{k+1})   u_{k} + \alpha_{k+1}  u_{k-1} \right) 
 \nonumber\\
 = & (1-\alpha_{k+1})  \tilde u_{k+1} + \alpha_{k+1} \tilde u_{k}  - (1-\beta_{k+1}) w_{k+1}^\alpha ,
   \label{hatu_dfn}
  \end{align}
where $\alpha_{k+1}$ minimizes 
\begin{align}
\label{eqn:min1}
 \|\nabla w_{k+1}^\alpha\| 
 \coloneqq 
 \left\|   (1-\alpha_{k+1}) \nabla (\tilde u_{k+1} - u_k) + \alpha_{k+1} \nabla (\tilde u_k - u_{k-1} ) \right\|.
\end{align}
\item[Step 3:] Find $u_{k+1} = g_N(\hat u_{k+1})$  by finding $u_{k+1}\in V$ satisfying  for all $v\in V$
\begin{align}
\label{wnaa}
\nu(\nabla u_{k+1}, \nabla v) +b^*( \hat u_{k+1}, u_{k+1}, v) + b^*(u_{k+1} , \hat u_{k+1}, v) - b^*(\hat u_{k+1}, \hat u_{k+1}, v) =  \langle f,v \rangle. 
\end{align}
\end{enumerate}
\end{alg}
\begin{remark}
The AAPicard-Newton $m=1$ Algorithm in \cite{PRTX24,FHHR25} differs slightly from Algorithm \ref{alg:aapn1} discussed here. For comparison, we present the algorithm in \cite{PRTX24} below. \\  
{\bf Algorithm} (PRTX \cite{PRTX24} AAPicard-Newton algorithm $m=1$ and no relaxation):\\
The method consists four steps at each iteration, and is described as follows:
 \begin{enumerate}
 \item[Step 1.] Find $\tilde u_{k+1} = g_P(u_k)$ by finding $\tilde u_{k+1}\in V$ satisfying \eqref{wpaa} for all $v\in V$.
\item[Step 2.] Find $\tilde {\tilde u}_{k+1} = g_P(\tilde u_{k+1})$ by finding $\tilde {\tilde u}_{k+1}\in V$ satisfying for all $v\in V$
\begin{align*}
\nu(\nabla \tilde{\tilde  u}_{k+1},\nabla v) + b^*(\tilde u_{k+1},\tilde{\tilde  u}_{k+1},v) = \langle f,v \rangle.
\end{align*}
\item[Step 3.] Set
 \begin{align*}
 \hat u_{k+1} =  &(1-\alpha_{k+1})  \tilde{\tilde u}_{k+1} + \alpha_{k+1} \tilde u_{k+1} ,
  \end{align*}
where $\alpha_{k+1}$ minimizes 
\begin{align*}
 \left\|   (1-\alpha_{k+1}) \nabla (\tilde {\tilde u}_{k+1} - \tilde u_{k+1} ) + \alpha_{k+1} \nabla (\tilde u_{k+1} - u_k)  \right\|.
\end{align*}
\item[Step 4.] Find $u_{k+1} = g_N(\hat u_{k+1})$  by finding $u_{k+1}\in V$ satisfying \eqref{wnaa} for all $v\in V$.
 \end{enumerate}
The differences between the two algorithms lie in Step 2 and Step 3. In Step 3, the algorithm uses Picard solutions from the current iteration only, requiring an additional Picard solve in Step 2 at each iteration. Thus, this method takes four linear solves per iteration: two Picard solves, one linear solve for optimization, and one Newton solve. Generally, increasing the Anderson depth $m$ enhances the convergence if the initial guess is good enough. However, adding more residual terms in Step 3 as $m$ increases will necessitate more Picard solves at each iteration. Consequently, for large $m$, the algorithm will need $m+3$ linear solves (including $m+1$ Picard solves, one linear solve for optimization, and one Newton solve) at each iteration. Furthermore, the computation time for Step 3 may increase significantly with larger $m$, making this algorithm less advisable for high depths.
 
 In contrast,  Algorithm \ref{alg:aapn1} involves only 1 Picard solve \eqref{wpaa} at each iteration and reuses the Picard solution(s) from the previous iteration(s) for the Anderson optimization step  \eqref{eqn:min1}. This same approach is taken by Algorithm \ref{alg:aapn2} and Algorithm \ref{alg:aapnm}, with depth $m\ge 2$. Overall, Algorithms \ref{alg:aapn1}, \ref{alg:aapn2} and \ref{alg:aapnm} require 3 linear solves at each iteration for any $m\ge 1$. This modification makes the method more practical for larger Anderson depth $m$, although it complicates convergence analysis.
\end{remark}

Let $\alpha_{k+1}$ minimize \eqref{eqn:min1}, define the Anderson gain
\begin{align}
\label{eqn:theta1}
\theta_{k+1} 
= \frac{\|\nabla w_{k+1}^\alpha\| }{\|\nabla (\tilde u_{k+1} - u_k)\|}
 =\frac{ \left\|   (1-\alpha_{k+1}) \nabla (\tilde u_{k+1} - u_k) + \alpha_{k+1} \nabla (\tilde u_k - u_{k-1} ) \right\| }{\|\nabla (\tilde u_{k+1} - u_k)\|}.
\end{align} 
Obviously, $0 \le \theta_{k+1} \le 1$, and moreover $\theta_{k+1} =1$ if and only if $\alpha_{k+1} =0$, which implies Algorithm \ref{alg:aapn1} is back to Algorithm \ref{alg:pn}.
From now on, we analyze this AAPicard-Newton iteration to show how it improves on Picard-Newton with $\theta_{k+1} <1$.  Because we are using AA and will draw from the AA theory of \cite{PR21}, convergence is analyzed in terms of residuals, not the difference between successive iterations. 
First, we assume on the parameters $\{\alpha_k\}$ we obtained from the Algorithm \ref{alg:aapn1}.
\begin{assumption}[$m=1$]
\label{ass:aapn1}
Let the sequence $\{\alpha_k\}$ from Algorithm \ref{alg:aapn1} be uniformly bounded such that for all $k$
\begin{align}
\label{eqn:aa1}
|\alpha_k| \le |1-\alpha_k| + |\alpha_k| \le C_A,
\end{align}
for some constant $C_A >0$.
\end{assumption}

Below presents the bounds for the Picard solution $\tilde u_{k+1}$ and the difference between successive iterations $\tilde u_{k+1} - \tilde u_k$, which is satisfied for any $\sigma >0$. Of course, it is also satisfied under the assumption $\sigma <1$.
\begin{lemma}%[$ \sigma >0$]
\label{lemma:pic}
For any positive integer $k$, we have
\begin{align}
\|\nabla \tilde u_{k+1}\| & \le \nu^{-1} \|f\|_{-1}, 
\label{ukbd1}\\
\label{eqn:picerr}
\|\nabla (\tilde u_{k+1} - \tilde u_{k})\| &\le  \sigma\|\nabla (u_k - u_{k-1})\|.
\end{align}
\end{lemma}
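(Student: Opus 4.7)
The plan is to prove the two inequalities independently, each by testing the Picard equation \eqref{wpaa} with a carefully chosen $v\in V$ and exploiting the skew-symmetry \eqref{bsym} together with the trilinear bound \eqref{bstarbound}.

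For \eqref{ukbd1} I would set $v=\tilde u_{k+1}$ in \eqref{wpaa}. The nonlinear term $b^*(u_k,\tilde u_{k+1},\tilde u_{k+1})$ vanishes by \eqref{bsym} (this is precisely why the skew-symmetrized form was introduced, since $u_k$ is not guaranteed to be discretely divergence-free when Taylor--Hood-type elements are used), leaving $\nu\|\nabla\tilde u_{k+1}\|^2=\langle f,\tilde u_{k+1}\rangle\le\|f\|_{-1}\|\nabla\tilde u_{k+1}\|$; dividing through yields the stated bound. This argument mirrors the one producing \eqref{nsstab} and, crucially, does not depend on the state of the outer iterate $u_k$, so the bound is automatic at every step.

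For \eqref{eqn:picerr} I would write \eqref{wpaa} at level $k+1$ and at level $k$, subtract, and split the nonlinear difference via the telescoping
\begin{align*}
b^*(u_k,\tilde u_{k+1},v)-b^*(u_{k-1},\tilde u_k,v)
= b^*(u_k,\tilde u_{k+1}-\tilde u_k,v)+b^*(u_k-u_{k-1},\tilde u_k,v).
\end{align*}
Testing the resulting equation with $v=\tilde u_{k+1}-\tilde u_k$ annihilates the first piece by \eqref{bsym}, and estimating the remaining piece via \eqref{bstarbound} together with the already-established bound $\|\nabla\tilde u_k\|\le\nu^{-1}\|f\|_{-1}$ gives
\[
\nu\|\nabla(\tilde u_{k+1}-\tilde u_k)\|^2
\le M\nu^{-1}\|f\|_{-1}\,\|\nabla(u_k-u_{k-1})\|\,\|\nabla(\tilde u_{k+1}-\tilde u_k)\|.
\]
Dividing by $\nu\|\nabla(\tilde u_{k+1}-\tilde u_k)\|$ produces exactly the contraction factor $\sigma=M\nu^{-2}\|f\|_{-1}$.

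I do not anticipate any genuine obstacle: both parts follow the standard energy/contraction template for Picard on the steady NSE under the small-data assumption. The only point requiring a little care is choosing the telescoping so that $u_k$ (the first argument shared between the two Picard equations at level $k+1$ and $k$) ends up occupying the first slot of the surviving middle trilinear form; the test $v=\tilde u_{k+1}-\tilde u_k$ then triggers skew-symmetry. The alternative splitting (with $u_{k-1}$ in the first slot) would not yield a useful estimate, since one would then be forced to bound a $b^*$ term whose second factor is a full iterate rather than the successive difference $u_k-u_{k-1}$.
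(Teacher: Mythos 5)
Your proposal is correct and follows essentially the same route as the paper: the stability bound comes from testing \eqref{wpaa} with $v=\tilde u_{k+1}$ and using skew-symmetry plus Cauchy--Schwarz, and the contraction estimate comes from subtracting consecutive Picard equations, splitting the nonlinear difference exactly as in the paper's equation \eqref{eqn:errpic}, and testing with $v=\tilde u_{k+1}-\tilde u_k$. No gaps.
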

\begin{proof}
Setting $v = \tilde u_{k+1}$ in \eqref{wpaa} eliminates the second term and yields \eqref{ukbd1} using Cauchy-Schwarz inequality.

We next prove \eqref{eqn:picerr}. Subtracting \eqref{wpaa} with $k$ from \eqref{wpaa} with $k+1$ yields
\begin{align}
\label{eqn:errpic}
\nu (\nabla (\tilde u_{k+1} - \tilde u_k), \nabla v) + b^*(u_k - u_{k-1},\tilde u_k, v)+b^*(u_{k}, \tilde u_{k+1} - \tilde u_k, v) = 0.
\end{align}
Letting $v = \tilde u_{k+1} - \tilde u_k$ eliminates the last term and gives
\begin{align*}
 \|\nabla (\tilde u_{k+1} - \tilde u_k)\| \le \nu^{-1}M\|\nabla (u_k - u_{k-1})\| \|\nabla \tilde u_k\| \le \sigma \|\nabla (u_k - u_{k-1})\|,
\end{align*}
thanks to \eqref{bstarbound} and \eqref{ukbd1}.
\end{proof}

Next, we find the bounds of $\hat u_{k+1}$ and $u_{k+1}$. 
\begin{assumption}
\label{ass:ukbd}
Let $u_0$ be a good initial guess such that for all $k$, the inequality is satisfied
 \begin{align}
\|\nabla u_{k+1}\| \le L\nu^{-1}\|f\|_{-1},
 \label{ukbd2_beta}
\end{align}
for some constant $L>0$.
\end{assumption}

Consequently, applying the triangle inequality, \eqref{eqn:aa1} and \eqref{ukbd1} to equation \eqref{hatu_dfn}, we can bound $\hat u_{k+1}$ as 
\begin{align}
\label{aaukbd}
 \|\nabla \hat u_{k+1}\| 
&\le |1-\alpha_{k+1}| \|\nabla \tilde u_{k+1}\| + |\alpha_{k+1} | \|\nabla \tilde u_{k}\| 
+ |1-\alpha_{k+1}| \|\nabla  u_{k}\| + |\alpha_{k+1} | \|\nabla u_{k-1}\| 
\nonumber\\ &\le
 \max\{1,L\}C_A\nu^{-1} \|f\|_{-1},
\end{align}
thanks to $0< \beta_{k} \le 1.$
Moreover, 
from equation \eqref{hatu_dfn}, we have
\begin{align}
 \|\nabla (\hat u_{k+1} - \hat u_{k})\| & \le 
\|\nabla ( \tilde u_{k+1} - \hat u_{k})\| 
+ |\alpha_{k+1}| \|\nabla (\tilde u_{k+1} - \tilde u_{k})\| 
+ (1-\beta_{k+1})  \| \nabla w_{k+1}^\alpha\|
\nonumber \\
& \le 
\|\nabla ( \tilde u_{k+1} - \hat u_{k})\| 
+ \sigma C_A \|\nabla ( u_{k} -  u_{k-1})\| 
+ (1-\beta_{k+1})\theta_{k+1} \| \tilde u_{k+1} - u_{k}\|,
\label{eqn:haterr}
\end{align}
thanks to triangle inequality and \eqref{eqn:picerr}.

\begin{remark}
\label{stab1}
Assumption \ref{ass:ukbd} is not  necessary for $\beta_{k+1}=1$. As one can easily obtain 
$$\|\nabla \hat u_{k+1}\| \le C_A\nu^{-1}\|f\|_{-1},$$
from \eqref{hatu_dfn}, triangle inequality and \eqref{ukbd1}. And then we have the following equation similar to \eqref{pnstable}
% \textcolor{red}{(Jaman says: after the correction, they do not look similar. Need to adjust accordingly)}
$$ \|\nabla u_{k+1}\| \le  \frac{1+\sigma C_A^2}{1-\sigma C_A } \ \nu^{-1}\|f\|_{-1},
$$
by letting $v= u_{k+1}$ in equation \eqref{wnaa} and followed by applying \eqref{bstarbound} and H\"older's inequality. This indicates that Algorithm \ref{alg:aapn1} is globally stable when $\beta_{k+1}=1$ and $\sigma, \sigma C_A<1$.
\end{remark}

Next, we present two preliminary lemmas in order to show the convergence of Algorithm \ref{alg:aapn1}.  We first show that $\|\nabla (\tilde u_{k+1} - \hat u_k)\| = \mathcal{O}(\|\nabla (u_k -\hat u_k)\|)$, which means that the convergence of Algorithm \ref{alg:aapn1} is essentially determined by the Newton's iteration step. 

\begin{lemma}%[$ 0<\sigma<1$]
\label{lemma10}
For any integer $k$, we have 
\begin{align}
\|\nabla (\tilde u_{k+1} - u_k) \| &\le \nu^{-1} M \|\nabla (u_k - \hat u_k)\|^2. 
\label{eqn:quad1}
\end{align}
If Assumptions \ref{ass:aapn1}, \ref{ass:ukbd} and $\sigma C_A \max\{1,L\} <1$ hold, we have inequality
\begin{align}
\|\nabla (\tilde u_{k+1} - \hat u_k)\| &= \mathcal{O}(\|\nabla (u_k -\hat u_k)\|) = C_k \|\nabla (u_k -\hat u_k)\|,
 \label{eqn:equiv}
\end{align}
for some constants $C_k>0$ independent of $\nu, h$.
\end{lemma}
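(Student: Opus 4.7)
The plan is to derive the two inequalities in succession, using the quadratic error identity characteristic of Newton-type iterations.

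For \eqref{eqn:quad1}, I first write down two equations: the current Picard step defining $\tilde u_{k+1}$, that is \eqref{wpaa}, and the previous Newton step defining $u_k = g_N(\hat u_k)$, which is \eqref{wnaa} shifted down by one index and tested against any $v\in V$. Subtracting these gives
\begin{align*}
\nu(\nabla(\tilde u_{k+1}-u_k),\nabla v) + b^*(u_k,\tilde u_{k+1},v) - b^*(\hat u_k,u_k,v) - b^*(u_k,\hat u_k,v) + b^*(\hat u_k,\hat u_k,v) = 0.
\end{align*}
The key algebraic manipulation is to split $b^*(u_k,\tilde u_{k+1},v) = b^*(u_k,\tilde u_{k+1}-u_k,v) + b^*(u_k,u_k,v)$ and then collapse the remaining four trilinear terms via the standard Newton identity
\begin{align*}
b^*(u_k,u_k,v) - b^*(\hat u_k,u_k,v) - b^*(u_k,\hat u_k,v) + b^*(\hat u_k,\hat u_k,v) = b^*(u_k-\hat u_k, u_k-\hat u_k, v).
\end{align*}
After this, testing with $v = \tilde u_{k+1}-u_k$ kills the middle term by the skew-symmetry \eqref{bsym}, leaving
\begin{align*}
\nu\|\nabla(\tilde u_{k+1}-u_k)\|^2 = -b^*(u_k-\hat u_k, u_k-\hat u_k, \tilde u_{k+1}-u_k),
\end{align*}
and one application of \eqref{bstarbound} followed by dividing by $\|\nabla(\tilde u_{k+1}-u_k)\|$ yields \eqref{eqn:quad1}.

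For \eqref{eqn:equiv}, the triangle inequality gives
\begin{align*}
\|\nabla(\tilde u_{k+1}-\hat u_k)\| \le \|\nabla(\tilde u_{k+1}-u_k)\| + \|\nabla(u_k-\hat u_k)\| \le \bigl(\nu^{-1}M\|\nabla(u_k-\hat u_k)\| + 1\bigr)\|\nabla(u_k-\hat u_k)\|,
\end{align*}
after invoking \eqref{eqn:quad1}. It then remains to bound the factor $\nu^{-1}M\|\nabla(u_k-\hat u_k)\|$ by a constant independent of $\nu$ and $h$. Using Assumption \ref{ass:ukbd} for $\|\nabla u_k\|$ and the bound \eqref{aaukbd} for $\|\nabla\hat u_k\|$, the triangle inequality produces $\|\nabla(u_k-\hat u_k)\| \le (L+\max\{1,L\}C_A)\nu^{-1}\|f\|_{-1}$, so that the prefactor becomes $\sigma(L+\max\{1,L\}C_A)+1$, which is finite by the smallness hypothesis $\sigma C_A\max\{1,L\}<1$ and is independent of the discretization. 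Taking $C_k$ equal to this expression completes \eqref{eqn:equiv}.

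The only subtlety is the subtraction step: one must be careful that $u_k$ here is the Newton iterate from the previous outer step, i.e.\ that it solves \eqref{wnaa} with $\hat u_k$, not $\hat u_{k+1}$, playing the role of the linearization point. Once that is set up, the manipulation is purely algebraic and the rest is mechanical bookkeeping.
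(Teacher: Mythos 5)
Your derivation of \eqref{eqn:quad1} is exactly the paper's: you subtract \eqref{wnaa} at step $k$ from \eqref{wpaa} at step $k+1$, collapse the four Newton terms into $b^*(u_k-\hat u_k,\,u_k-\hat u_k,\,v)$, test with $v=\tilde u_{k+1}-u_k$ so skew-symmetry removes the middle term, and apply \eqref{bstarbound}. That part is correct, including your closing caveat about which linearization point $u_k$ solves \eqref{wnaa} against.

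The second part has a genuine gap. The statement $\|\nabla(\tilde u_{k+1}-\hat u_k)\| = C_k\|\nabla(u_k-\hat u_k)\|$ with $C_k>0$ ``independent of $\nu,h$'' is consumed later (Lemma \ref{lemma20}, Theorem \ref{thm:aapn11}) through the factors $C_k^{-2}$ and $C_k^{-4}$, so the lemma really asserts a two-sided equivalence: $C_k$ must be bounded above \emph{and} away from zero. Your triangle-inequality argument,
\begin{align*}
\|\nabla(\tilde u_{k+1}-\hat u_k)\| \le \bigl(\nu^{-1}M\|\nabla(u_k-\hat u_k)\|+1\bigr)\|\nabla(u_k-\hat u_k)\|,
\end{align*}
gives only the upper bound on $C_k$ (and does so more simply than the paper's \eqref{eqn:equi2}). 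You never establish the reverse inequality $\|\nabla(u_k-\hat u_k)\| \le c\,\|\nabla(\tilde u_{k+1}-\hat u_k)\|$, which the paper gets by setting $v=\hat u_k-u_k$ in \eqref{wpn} (the quadratic term vanishes by skew-symmetry), applying the polarization identity \eqref{eqn:polar} to the viscous term and Young's inequality to $b^*(u_k,\tilde u_{k+1}-\hat u_k,u_k-\hat u_k)$, yielding $\tfrac{\nu}{4}\|\nabla(u_k-\hat u_k)\|^2 \le \nu\bigl(\tfrac12+L^2\sigma^2\bigr)\|\nabla(\tilde u_{k+1}-\hat u_k)\|^2$. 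Nor can you rescue that direction cheaply by writing $\|\nabla(u_k-\hat u_k)\| \le \|\nabla(u_k-\tilde u_{k+1})\| + \|\nabla(\tilde u_{k+1}-\hat u_k)\|$ and absorbing the first term via \eqref{eqn:quad1}: the absorption requires $\sigma(L+\max\{1,L\}C_A)<1$, which does not follow from the stated hypothesis $\sigma C_A\max\{1,L\}<1$. Without the polarization step, the constants $C_k^{-1}$ used downstream are uncontrolled, so your proof as written does not deliver what the lemma is later used for.
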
 
\begin{proof}
Subtracting \eqref{wnaa} at iteration  $k$ from \eqref{wpaa} at iteration $k+1$, we obtain
\begin{align}
\label{wpn}
\nu(\nabla (\tilde u_{k+1} - u_k), \nabla v) + b^*( u_k, \tilde u_{k+1}-u_k,v) + b^*(u_k - \hat u_k,  u_{k} -\hat u_k,v)  = 0.
\end{align}
Setting $v= \tilde u_{k+1} - u_k$ eliminates the second term.
Applying \eqref{bstarbound} establishes \eqref{eqn:quad1}.

On the other hand, setting $v = \hat  u_k - u_k$ in equation \eqref{wpn} eliminates the 
third term. Applying \eqref{eqn:polar} to the first term and 
\eqref{bstarbound} then \eqref{eqn:youn} with $\epsilon = 4$ to the second term yields 
\begin{align*}
&\frac{\nu}2 \left( \|\nabla (\tilde u_{k+1} -u_k)\|^2 + \|\nabla ( u_k - \hat u_k)\|^2 - \|\nabla (\tilde u_{k+1} - \hat u_k )\|^2 \right)\\
= & b^*(u_k, \tilde u_{k+1} - \hat u_k, u_k - \hat u_k)\\
\le & \frac{\nu}4 \|\nabla (u_k- \hat u_k)\|^2 + \nu^{-1}M^2 \|\nabla u_k\|^2 \|\nabla (\tilde u_{k+1} -\hat u_k)\|^2.
\end{align*}
Dropping the term $\|\nabla (\tilde u_{k+1} - u_k)\|^2$ reduces to 
\begin{align}
\label{eqn:equi1}
\frac{\nu}4 \|\nabla ( u_k - \hat u_k)\|^2 &\le 
 \left( \frac{\nu}2+ \nu^{-1}M^2\|\nabla u_k\|^2 \right)\|\nabla (\tilde u_{k+1} - \hat u_k )\|^2 \nonumber\\
& 
\le
\nu \left( 
\frac{1}2+  L^2\sigma^2 \right) \|\nabla (\tilde u_{k+1} - \hat u_k )\|^2,
\end{align}
thanks to \eqref{ukbd2_beta}.

To get the reverse inequality necessary for \eqref{eqn:equiv}, we can
rearrange equation \eqref{wpn} to get 
\begin{align*}
\nu(\nabla (\tilde u_{k+1} - u_k), \nabla v) + b^*( \hat u_k, \tilde u_{k+1}-u_k,v) + b^*(u_k - \hat u_k, \tilde u_{k+1} -\hat u_k, v)  = 0.
\end{align*}
Setting $v = \tilde u_{k+1} - \hat u_k$ eliminates the third term. 
Again applying \eqref{eqn:polar} to the first term and \eqref{bstarbound}  followed by \eqref{eqn:youn} to the second yields
\begin{align*}
\frac{\nu}2 \left(  \|\nabla (\tilde u_{k+1} - u_k)\|^2 + \|\nabla (\tilde u_{k+1} - \hat u_k)\|^2 -\|\nabla (u_k - \hat u_k)\|^2  \right)\\
\le \frac{M}2 \|\nabla \hat u_k\| \left(  \|\nabla (\tilde u_{k+1} - u_k)\|^2 + \|\nabla (\tilde u_{k+1} - \hat u_k)\|^2 \right).
\end{align*}
This reduces to 
\begin{align*}
(1-\max\{1,L\}C_A\sigma) \left( \|\nabla (\tilde u_{k+1} - \hat u_k)\|^2  + \|\nabla (\tilde u_{k+1} -  u_k)\|^2 \right) 
 \le \|\nabla (u_k -\hat u_k)\|^2,
\end{align*}
using \eqref{aaukbd}.
Since $ \max\{1,L\}C_A\sigma <1$, dropping the term $\|\nabla (\tilde u_{k+1} -  u_k)\|^2$ yields
\begin{align}
 \label{eqn:equi2}
( 1- \max\{1,L\}C_A\sigma ) \|\nabla (\tilde u_{k+1} - \hat u_k)\|^2 \le \|\nabla (u_k -\hat u_k)\|^2.
\end{align}
Inequalities \eqref{eqn:equi1} and \eqref{eqn:equi2} show the equivalence of $ \tilde u_{k+1} - \hat u_k$ and $u_k -\hat u_k$ satisfying 
\begin{align*}
(1-\max\{1,L\}C_A\sigma )\|\nabla (\tilde u_{k+1} - \hat u_k)\| ^2 
\le \|\nabla (u_k - \hat u_k)\|^2 \le 
\left( 2+  4\sigma^2 L^2 \right) \|\nabla (\tilde u_{k+1} - \hat u_k )\|^2.
\end{align*}
This establishes \eqref{eqn:equiv} and finishes the proof.
\end{proof}
Next, we bound the difference of $u_k$ between successive iterations. 
\begin{lemma}%[$0<\sigma<1$]
\label{lemma20} 
Let Assumptions \ref{ass:aapn1}, \ref{ass:ukbd} and $\sigma C_A \max\{1,L\} <1$ hold,
then we have
\begin{multline}
\label{eqn:newterr}
\|\nabla (u_{k+1} - u_{k})\| \le 
 \frac{6\nu^{-1}M\sigma^2 C_A^{2} }{1-\sigma C_A \max\{1,L\}}  \|\nabla (u_k -u_{k-1})\|^2\\
+\frac{(C_k^{-2}+6)\nu^{-1}M}{1-\sigma C_A \max\{1,L\}} \|\nabla (\tilde u_{k+1} - \hat u_{k})\|^2 + \mathcal{O}( \|\nabla (\tilde u_{k+1}- \hat u_{k})\|^4),
\end{multline}
where $C_k>0$ is given in Lemma \ref{lemma10}.
\end{lemma}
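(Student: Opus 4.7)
The plan is to derive the estimate by differencing the Newton equations \eqref{wnaa} at consecutive iterations, testing with $v = u_{k+1} - u_k$, and feeding the outcome into the preparatory bounds \eqref{eqn:haterr}, Lemma \ref{lemma10}, and the stability estimate \eqref{aaukbd}.

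First, I would subtract \eqref{wnaa} at iteration $k$ from \eqref{wnaa} at iteration $k+1$, use the multilinearity of $b^*$ to expand each difference $b^*(\hat u_{k+1}, u_{k+1}, v)-b^*(\hat u_k, u_k, v)$ and its two analogues, and regroup so that the ``Newton Jacobian'' piece $\nu(\nabla\delta u,\nabla v)+b^*(\hat u_{k+1},\delta u,v)+b^*(\delta u,\hat u_{k+1},v)$ is matched on the right by a remainder of the form $-b^*(\delta\hat u, u_k-\hat u_{k+1},v)-b^*(u_k-\hat u_k,\delta\hat u,v)$, where $\delta u := u_{k+1}-u_k$ and $\delta\hat u := \hat u_{k+1}-\hat u_k$. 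Testing with $v=\delta u$ kills $b^*(\hat u_{k+1},\delta u,\delta u)$ by skew-symmetry \eqref{bsym}, while the surviving term $b^*(\delta u,\hat u_{k+1},\delta u)$ is bounded via \eqref{bstarbound} and \eqref{aaukbd} by $\max\{1,L\}C_A\sigma\nu\|\nabla\delta u\|^2$, which under $\sigma C_A\max\{1,L\}<1$ is absorbed into the left. Bounding the two remainder terms with \eqref{bstarbound} and dividing by $\|\nabla\delta u\|$ yields the preliminary estimate
\[
\|\nabla\delta u\| \le \frac{M\nu^{-1}}{1-\max\{1,L\}C_A\sigma}\, \|\nabla\delta\hat u\|\bigl(\|\nabla(u_k-\hat u_{k+1})\|+\|\nabla(u_k-\hat u_k)\|\bigr).
\]

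Setting $T:=\|\nabla(\tilde u_{k+1}-\hat u_k)\|$ and $U:=\|\nabla(u_k-u_{k-1})\|$, I would then estimate each factor: \eqref{eqn:haterr} combined with \eqref{eqn:quad1} and \eqref{eqn:equiv} yields $\|\nabla\delta\hat u\| \le T + \sigma C_A U + \nu^{-1}MC_k^{-2}T^2$; \eqref{eqn:equiv} gives $\|\nabla(u_k-\hat u_k)\| = C_k^{-1}T$; and splitting $u_k-\hat u_{k+1}=(u_k-\tilde u_{k+1})+(\tilde u_{k+1}-\hat u_{k+1})$, applying \eqref{eqn:quad1} to the first piece, and using \eqref{hatu_dfn} with \eqref{eqn:picerr} and \eqref{eqn:quad1} on the second, produces $\|\nabla(u_k-\hat u_{k+1})\| \le 2\nu^{-1}MC_k^{-2}T^2+C_A\sigma U$. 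Substituting all three into the preliminary estimate, I would expand the product into a polynomial in $T$ and $U$, then apply Young's inequality \eqref{eqn:youn} with weights tailored to the target form: each mixed $TU$ contribution is split as $\tfrac12(C_A^2\sigma^2 U^2 + T^2)$, each $UT^2$ contribution as $\tfrac12(U^2+T^4)$, and each pure $T^3$ contribution as $\tfrac12(T^2+T^4)$. Collecting like terms then bounds every quadratic-in-$U$ contribution by $6\sigma^2 C_A^2 U^2$, every quadratic-in-$T$ contribution by $(C_k^{-2}+6)T^2$, and sweeps everything else into $\mathcal{O}(T^4)$.

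The main obstacle is this last book-keeping step: the expansion produces on the order of ten monomials of mixed degree in $T$ and $U$, and landing exactly on the coefficients $6\sigma^2 C_A^2$ and $C_k^{-2}+6$ stated in \eqref{eqn:newterr} requires choosing the Young weights so that the $C_k^{-2}$ structure of the $T^2$ coefficient is preserved, the $\sigma^2 C_A^2$ structure of the $U^2$ coefficient is recovered from cross terms, and every cubic or higher-order term in $T$ is safely absorbed into the quartic remainder without inflating either quadratic constant.
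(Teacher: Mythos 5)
Your strategy is, in substance, the paper's: you difference \eqref{wnaa} at consecutive iterations (your regrouped identity is exactly the paper's \eqref{eqn:equi03}), test with $v=u_{k+1}-u_k$, kill $b^*(\hat u_{k+1},\delta u,\delta u)$ by skew-symmetry, absorb $b^*(\delta u,\hat u_{k+1},\delta u)$ into the left using \eqref{aaukbd} and $\sigma C_A\max\{1,L\}<1$, and control the remainder through \eqref{eqn:haterr}, \eqref{eqn:quad1} and \eqref{eqn:equiv}. Your preliminary inequality coincides with the paper's intermediate estimate, and your bounds for $\|\nabla\delta\hat u\|$ and $\|\nabla(u_k-\hat u_k)\|$ are the ones the paper uses.

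The step that, as written, would not deliver \eqref{eqn:newterr} is the final bookkeeping. By expanding the product $\|\nabla\delta\hat u\|\bigl(\|\nabla(u_k-\hat u_{k+1})\|+\|\nabla(u_k-\hat u_k)\|\bigr)$ into monomials in $T$ and $U$ you create genuinely odd-degree terms such as $T\cdot 2\nu^{-1}MC_k^{-2}T^2$. A cubic $T^3$ is \emph{not} $\mathcal{O}(T^4)$ as $T\to 0$, so it cannot be swept into the remainder; and the split $T^3\le\tfrac12(T^2+T^4)$ feeds a contribution of size $\nu^{-1}MC_k^{-2}$ (times the outer prefactor $\nu^{-1}M$, hence of order $\nu^{-2}M^2$) into the $T^2$ coefficient, which is not of the stated form $(C_k^{-2}+6)\nu^{-1}M$. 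The paper sidesteps this entirely by applying Young's inequality to the \emph{whole factors} before expanding: from $2M\|\nabla\delta\hat u\|\,\|\nabla(u_k-\hat u_k)\|+M\|\nabla\delta\hat u\|\,\|\nabla(\hat u_{k+1}-u_k)\|$ it first obtains $M\|\nabla(u_k-\hat u_k)\|^2+2M\|\nabla\delta\hat u\|^2$, and only then expands $\|\nabla\delta\hat u\|^2\le 3\bigl(T^2+\sigma^2C_A^2U^2+(\text{quartic})\bigr)$ via $(a+b+c)^2\le 3(a^2+b^2+c^2)$ and \eqref{eqn:quad1}; every surviving power of $T$ and $U$ is then even, and the constants $6\sigma^2C_A^2$, $C_k^{-2}+6$ and the $\mathcal{O}(T^4)$ remainder drop out directly. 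If you reorder your Young's-inequality step to act on the three norm factors rather than on their monomial expansion, your argument becomes the paper's proof; as proposed, it proves a correct but weaker inequality with inflated, $\nu$-dependent constants in front of $\|\nabla(\tilde u_{k+1}-\hat u_k)\|^2$.
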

\begin{proof}
Subtracting \eqref{wnaa} with $k$ from \eqref{wnaa} with $k+1$ yields
\begin{align}
\label{eqn:equi03}
\nu(\nabla (u_{k+1} - u_{k}), \nabla v)  + b^*(u_{k} - \hat u_{k}, \hat u_{k+1} - \hat u_k,v) + b^*( u_{k+1} - u_k,\hat u_{k+1}, v)  \nonumber\\
- b^*(\hat u_{k+1} - \hat u_k, \hat u_{k+1} -u_{k}, v) +b^*(\hat u_{k+1} , u_{k+1} - u_k,v) =0.
\end{align}
Setting $v = u_{k+1} - u_k$ eliminates the last term. Applying  \eqref{bstarbound}, \eqref{aaukbd} and followed by triangle inequality and inequality $2ab \le a^2 + b^2$,  for any $a,b\ge 0$ gives
\begin{align*}
 & \nu(1- \sigma C_A\max\{1,L\} ) \|\nabla (u_{k+1} - u_k)\|  \\
 \le &  M\|\nabla (\hat u_{k+1} - \hat u_k)\| \|\nabla (u_{k} - \hat u_{k})\| + M \|\nabla (\hat u_{k+1} - \hat u_{k})\|\|\nabla (\hat u_{k+1} - u_{k})\| \\
 \le &2 M\|\nabla (\hat u_{k+1} - \hat u_k)\| \|\nabla (u_{k} - \hat u_{k})\| + M \|\nabla (\hat u_{k+1} - \hat u_{k})\|^2 \\
 \le & M \|\nabla (u_{k} - \hat u_{k})\|^2 +  2M\|\nabla (\hat u_{k+1} - \hat u_k)\|^2.
 \end{align*}
 Then utilizing \eqref{eqn:haterr} and $(a+b+c)^2 \le 3(a^2 +b^2+c^2)$ for any $a,b,c\ge0$ and followed by Lemma \ref{lemma10}, we obtain
  \begin{align*}
 & \nu(1- \sigma C_A\max\{1,L\} ) \|\nabla (u_{k+1} - u_k)\|  \\
 \le & M \|\nabla (u_{k} - \hat u_{k})\|^2 +  6M  \left( \|\nabla (\tilde u_{k+1}- \hat u_{k})\|^2 + \sigma^2 C_A^2  \|\nabla (u_k - u_{k-1})\|^2\right) \\
 &  + 6M (1- \beta_{k+1})^2 \theta_{k+1}^2 \|\nabla (\tilde u_{k+1} -u_k)\|^2 \\
 \le & M (C_k^{-2}+6) \|\nabla (\tilde u_{k+1} - \hat u_{k})\|^2 
 + 6M\sigma^2 C_A^2 \|\nabla (u_k - u_{k-1})\|^2 
 \\ &
 + 6 \nu^{-2} M^3 (1- \beta_{k+1})^2 \theta_{k+1}^2 C_k^{-4} 
  \|\nabla (\tilde u_{k+1} -\hat u_k)\|^4.
\end{align*}
Dividing both sides by $ \nu(1- \sigma C_A\max\{1,L\} )$, we obtain \eqref{eqn:newterr} and complete the proof.
\end{proof}

Finally, we are ready to prove a quadratic convergence result.  The quadratic convergence is proven for differences of $(\tilde u_{k+1} - \hat u_k)$, as this is how the analysis naturally leads us, i.e., the Step 1 solution minus the Step 2 solution of the previous iteration.  This can still be considered as quadratic convergence of the algorithm in the usual sense, if we consider Step 1 on iteration 1 to be the initial guess instead, and reorder Steps 1,2,3 to be Steps 2,3,1.
\begin{thm}[$m=1$] 
\label{thm:aapn11}
Let $\alpha_k \neq 0$, Assumptions \ref{ass:aapn1}, \ref{ass:ukbd} and $\sigma C_A\max\{1,L\}, \sigma <1$ hold, we have
\begin{multline}
\label{eqn:conv1}
\|\nabla (\tilde u_{k+1} - \hat u_k)\| \le  
 \frac{ \nu^{-1}M C_{k-1}^{-2}C_k}{1-\sigma} \left( \theta_k\beta_k \sigma + (1-\beta_k ) C_A \right) \|\nabla ( \tilde u_{k} - \hat u_{k-1})\|^2 
 \\
+(1-\beta_k ) \frac{ C_A \nu^{-1} M C_{k-2}^{-2}}{1-\sigma} \|\nabla (\tilde u_{k-1} -\hat u_{k-2})\|^2
\\
+ \text{higher order terms of } \{ \tilde u_j - \hat u_{j-1} \}_{j=2}^k \text{ and } u_1 - u_0,
\end{multline}
where $\theta_k, C_A, L$ are defined in \eqref{eqn:theta1}, \eqref{eqn:aa1} and \eqref{ukbd2_beta} respectively, and $C_k$ are some constants independent of $\nu, h$.
\end{thm}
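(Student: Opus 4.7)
My plan is to begin from the identity implied by the update rule \eqref{hatu_dfn}, namely
\[ \tilde u_{k+1} - \hat u_k = (\tilde u_{k+1} - \tilde u_k) + \alpha_k(\tilde u_k - \tilde u_{k-1}) + (1-\beta_k) w_k^\alpha, \]
and apply the triangle inequality in the $H^1$-seminorm. The task then reduces to estimating three pieces: the two consecutive Picard differences $\tilde u_{k+1}-\tilde u_k$ and $\tilde u_k-\tilde u_{k-1}$, and the AA-combined residual $w_k^\alpha$ weighted by $(1-\beta_k)$.

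For the Picard differences, I would invoke the Lipschitz-type bound \eqref{eqn:picerr} of Lemma \ref{lemma:pic}, which yields a factor of $\sigma$ multiplied by $\|\nabla(u_k - u_{k-1})\|$ and $\|\nabla(u_{k-1} - u_{k-2})\|$ respectively. These Newton-iterate differences are not themselves target residuals, so I would apply Lemma \ref{lemma20} to express each of them quadratically in the appropriate $e_j := \|\nabla(\tilde u_j - \hat u_{j-1})\|$; the $\|\nabla(u_{j-1}-u_{j-2})\|^2$ piece appearing there is of quartic order in $e_{j-1}$ and collapses into the higher-order remainder.

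For the term $(1-\beta_k)w_k^\alpha$, I would expand $w_k^\alpha = (1-\alpha_k)(\tilde u_k - u_{k-1}) + \alpha_k(\tilde u_{k-1} - u_{k-2})$, apply the triangle inequality with Assumption \ref{ass:aapn1} to obtain $\|\nabla w_k^\alpha\| \le C_A\|\nabla(\tilde u_k - u_{k-1})\| + C_A\|\nabla(\tilde u_{k-1} - u_{k-2})\|$, and then convert each Picard residual into a quadratic in $e_k$ (respectively $e_{k-1}$) via \eqref{eqn:quad1} of Lemma \ref{lemma10}. This produces the $(1-\beta_k)C_A$ coefficients on $e_k^2$ and $e_{k-1}^2$ that appear in the statement. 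At the stage where $w_k^\alpha$ enters paired with the $\beta_k$-weighted portion of the estimate, I would replace $\|\nabla w_k^\alpha\|$ directly by $\theta_k\|\nabla(\tilde u_k - u_{k-1})\|$ using the Anderson-gain identity \eqref{eqn:theta1}, and combining this with the $\sigma$ coming from the Picard Lipschitz step produces the $\theta_k\beta_k\sigma$ coefficient in the leading term.

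Finally, the equivalence in Lemma \ref{lemma10} converts bounds on $\|\nabla(u_k - \hat u_k)\|$ back to $e_{k+1}$, introducing the factor $C_k$ in the first-term coefficient. A linear-in-$e_{k+1}$ contribution multiplied by $\sigma$ is absorbed into the left-hand side, yielding the $\frac{1}{1-\sigma}$ prefactor. Every term of order four or higher in the $e_j$ -- including those generated by $\|\nabla(u_{j-1}-u_{j-2})\|^2$ and by $\|\nabla(\tilde u_{j+1}-u_j)\|^4$ in Lemma \ref{lemma20} -- collects into the higher-order remainder, together with the seed $u_1-u_0$ from the start of the recursion. The principal obstacle will be the bookkeeping: the proof requires a consistent choice, at each occurrence of $w_k^\alpha$, between the sharp Anderson-gain bound $\theta_k\|\nabla(\tilde u_k - u_{k-1})\|$ and the coarser $C_A$-triangle bound, paired with the $\beta_k$ and $1-\beta_k$ weights so that the exact coefficients $\theta_k\beta_k\sigma + (1-\beta_k) C_A$ and $(1-\beta_k) C_A$ emerge as stated.
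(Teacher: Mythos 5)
There is a genuine gap, and it is structural rather than a matter of bookkeeping. Your starting identity $\tilde u_{k+1}-\hat u_k=(\tilde u_{k+1}-\tilde u_k)+\alpha_k(\tilde u_k-\tilde u_{k-1})+(1-\beta_k)w_k^\alpha$ is correct, but applying the triangle inequality to it destroys exactly the cancellation the theorem is about. Take $\beta_k=1$: your estimate reduces to $\|\nabla(\tilde u_{k+1}-\hat u_k)\|\le\sigma\|\nabla(u_k-u_{k-1})\|+|\alpha_k|\sigma\|\nabla(u_{k-1}-u_{k-2})\|$. The second term is bounded, via Lemma \ref{lemma20}, by a constant times $\|\nabla(\tilde u_{k-1}-\hat u_{k-2})\|^2$ at leading order --- but that is \emph{not} a higher-order term: if $e_k\approx e_{k-1}^2$, then $e_{k-1}^2\approx e_k$, i.e.\ this contribution is of \emph{linear} order in the current residual, and it cannot be absorbed into the remainder of \eqref{eqn:conv1beta}. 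Moreover, the first term inherits from Lemma \ref{lemma20} the coefficient $\frac{(C_{k-1}^{-2}+6)\nu^{-1}M}{1-\sigma C_A\max\{1,L\}}$ on $e_k^2$, which contains no Anderson gain: $\theta_k$ simply cannot appear through this route, because $\theta_k$ measures the smallness of the \emph{combination} $w_k^\alpha=(1-\alpha_k)(\tilde u_k-u_{k-1})+\alpha_k(\tilde u_{k-1}-u_{k-2})$, not of its summands separately. Your attempt to recover $\theta_k\beta_k\sigma$ from ``the $\beta_k$-weighted portion'' is not available in your decomposition, where $w_k^\alpha$ carries only the weight $(1-\beta_k)$.

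The paper's proof avoids both problems by working at the level of the weak formulations. It assembles, from \eqref{wpaa}, \eqref{wnaa} and the combination \eqref{eqn:betak01}--\eqref{eqn:betak02}, a single variational equation satisfied by $u_k-\hat u_k$ and tests it with $v=u_k-\hat u_k$. In that equation $w_k^\alpha$ survives as an intact first argument of the trilinear form, $-\beta_k b^*(w_k^\alpha,\tilde u_k,v)$, so the gain identity $\|\nabla w_k^\alpha\|=\theta_k\|\nabla(\tilde u_k-u_{k-1})\|$ together with \eqref{eqn:quad1} produces the coefficient $\beta_k\sigma\theta_k MC_{k-1}^{-2}e_k^2$; and every occurrence of $\tilde u_k-\tilde u_{k-1}$ shows up only inside a \emph{product} of two first-order-small quantities, e.g.\ $\alpha_k b^*(u_k-u_{k-2},\tilde u_k-\tilde u_{k-1},v)$, which is quartic in the $e_j$ and genuinely higher order. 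The term $-b^*(u_k-\hat u_k,\tilde u_k,v)$ is absorbed into the left side using \eqref{ukbd1}, giving the $(1-\sigma)$ denominator, and Lemma \ref{lemma10} converts $\|\nabla(u_k-\hat u_k)\|$ to $C_k^{-1}\|\nabla(\tilde u_{k+1}-\hat u_k)\|$. If you want to salvage your approach you would have to keep $w_k^\alpha$ unexpanded and find a way to make $\alpha_k(\tilde u_k-\tilde u_{k-1})$ pair with another small factor; that is precisely what the energy argument does and what a norm-level triangle inequality cannot.
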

\begin{remark}
First of all, Algorithm \ref{alg:aapn1} converges quadratically for $0<\beta_{k+1} \le1$.
Evidently, $\beta_{k+1}=1$ optimizes the convergence as fewer residual terms are on the RHS of \eqref{eqn:conv1}, which reduces to  
\begin{multline}
\label{eqn:conv1beta}
\|\nabla (\tilde u_{k+1} - \hat u_k)\| \le  
 \frac{ \nu^{-1}M C_{k-1}^{-2}C_k}{1-\sigma}\sigma  \theta_k \|\nabla ( \tilde u_{k} - \hat u_{k-1})\|^2 
\\+ \text{higher order terms of } \{ \tilde u_j - \hat u_{j-1} \}_{j=2}^k \text{ and } u_1 - u_0.
\end{multline}
Besides that, the solutions $\{u_{k+1}\}$ are globally stable due to the discussion in Remark \ref{stab1}.  In addition, AAPicard-Newton method with $m=1$ is superior to Algorithm \ref{alg:pn} as $\theta_k <1$ when $\alpha_k\neq 0$ and $Re =\nu^{-1}$ large enough, leading to a small convergence rate. Otherwise, the higher-order terms may be dominant and decelerate the convergence. 
\end{remark}
\begin{proof}
From \eqref{hatu_dfn}, we rewrite  $ u_{k} - \hat u_k$ as 
\begin{align*}
u_k - \hat u_k  = &
u_k - \tilde u_{k} + \alpha_k(\tilde u_k - \tilde u_{k-1}) + (1-\beta_k) w_k^\alpha, 
\end{align*} 
and would like to construct an equation of $ u_{k} - \hat u_k$ below. We begin by subtracting \eqref{wnaa} with $u_{j-1}$ from \eqref{wpaa} with $\tilde u_j$ gives
\begin{align}
\nu(\nabla (\tilde u_j - u_{j-1}), \nabla v) + b^*(u_{j-1}, \tilde u_{j} -  u_{j-1},v)  + b^*(u_{j-1} - \hat u_{j-1}, u_{j-1} -\hat u_{j-1},  v) &=0.
\label{eqn:betak01}
\end{align}
Adding $(1-\alpha_k)\times$\eqref{eqn:betak01} with $j = k$ and  $\alpha_k\times$\eqref{eqn:betak01} with $j = k-1$ gives an equation of $w^\alpha_k$
\begin{multline}
\label{eqn:betak02}
\nu(\nabla w^\alpha_k, \nabla v) 
+  b^*(u_{k-1}, w_k^\alpha, v) 
- \alpha_kb^*(u_k - u_{k-1} , \tilde u_{k-1} - u_{k-2},v)  
 \\
+ (1-\alpha_k)  b^*(u_{k-1} - \hat u_{k-1}, u_{k-1} -\hat u_{k-1},  v)
+ \alpha_k  b^*(u_{k-2} - \hat u_{k-2}, u_{k-2} -\hat u_{k-2},  v)
=0.
\end{multline}
Adding $(1-\alpha_{k})\times$ \eqref{wpaa} with $\tilde u_{k}$ and $\alpha_{k}\times$ \eqref{wpaa} with $\tilde u_{k-1}$ yields
\begin{multline*}
\nu(\nabla ((1-\alpha_k) \tilde u_k + \alpha_k \tilde u_{k-1}) , \nabla v)
  - b^*(w_k^\alpha, \tilde u_k, v)
   +b^*((1-\alpha_k) \tilde u_k + \alpha_k \tilde u_{k-1}, \tilde u_k,v) \\
-\alpha_k b^*(u_{k-2},\tilde u_k - \tilde u_{k-1} ,v)
= \langle f,v\rangle. 
\end{multline*}
Subtracting it  from the sum of equation \eqref{wnaa} with $u_k$ and $(1-\beta_k)\times $ \eqref{eqn:betak02}, we obtain an equation of $u_k - \hat u_k$
\begin{multline*}
\nu(\nabla (u_k -\hat u_k),\nabla v)
=
- b^*(\hat u_k, u_k -\hat u_k, v)
 - b^*(u_k, \hat u_k,v)
-b^*(w_k^\alpha, \tilde u_k, v) 
 + b^*((1-\alpha_k) \tilde u_k + \alpha_k \tilde u_{k-1}, \tilde u_k,v)
 \\
  -\alpha_k b^*(u_{k-2},\tilde u_k - \tilde u_{k-1} ,v)
  -(1-\beta_k)  b^*(u_{k-1}, w_k^\alpha, v) 
+ (1-\beta_k) \alpha_kb^*(u_k - u_{k-1} , \tilde u_{k-1} - u_{k-2},v)  
 \\
- (1-\beta_k) (1-\alpha_k)  b^*(u_{k-1} - \hat u_{k-1}, u_{k-1} -\hat u_{k-1},  v)
- (1-\beta_k) \alpha_k  b^*(u_{k-2} - \hat u_{k-2}, u_{k-2} -\hat u_{k-2},  v).
\end{multline*}
Combining the third, fifth and sixth terms gives
\begin{align*}
& - b^*(u_k, \hat u_k,v)
 + b^*((1-\alpha_k) \tilde u_k + \alpha_k \tilde u_{k-1}, \tilde u_k,v)
 -\alpha_k b^*(u_{k-2},\tilde u_k - \tilde u_{k-1} ,v)
 \\
 = & - b^*(u_k - \hat u_k - (1-\beta_k)w_k^\alpha, \tilde u_k,v)
- b^*(u_k ,\hat u_k -  \tilde u_k ,v)
 -\alpha_k b^*(u_{k-2},\tilde u_k - \tilde u_{k-1} ,v)
\\
= & - b^*(u_k - \hat u_k, \tilde u_k,v)
 + (1-\beta_k) b^*(w_k^\alpha, \tilde u_k, v)
+ (1-\beta_k) b^*( u_k, w_k^\alpha,v)
+ \alpha_k b^*(u_k - u_{k-2}, \tilde u_k -\tilde u_{k-1},v),
\end{align*}
thanks to \eqref{hatu_dfn},
and then the equation of $u_k -\hat u_k$ becomes
\begin{multline*}
\nu(\nabla (u_k -\hat u_k),\nabla v)
=
- b^*(\hat u_k, u_k -\hat u_k, v) 
 -\beta_kb^*(w_k^\alpha, \tilde u_k, v) 
- b^*(u_k - \hat u_k, \tilde u_k,v)
\\
+\alpha_k b^*(u_k - u_{k-2}, \tilde u_k -\tilde u_{k-1},v)
  + (1-\beta_k)  b^*(u_k - u_{k-1}, w_k^\alpha, v) 
+ (1-\beta_k) \alpha_kb^*(u_k - u_{k-1} , \tilde u_{k-1} - u_{k-2},v)  
 \\
- (1-\beta_k) (1-\alpha_k)  b^*(u_{k-1} - \hat u_{k-1}, u_{k-1} -\hat u_{k-1},  v)
- (1-\beta_k) \alpha_k  b^*(u_{k-2} - \hat u_{k-2}, u_{k-2} -\hat u_{k-2},  v).
\end{multline*}
Setting $v = u_ k- \hat u_k$ eliminates the second term, and applying inequalities \eqref{bstarbound}, \eqref{eqn:aa1} and Lemma \ref{lemma:pic}  gives
\begin{align*}
& \nu (1-\sigma) \|\nabla (u_k - \hat u_k)\| 
\\
\le &
\beta_k M \|\nabla w_k^\alpha\|  \cdot \nu^{-1} \|f\|_{-1}  
+ |\alpha_k| M\|\nabla (u_k - u_{k-2})\| \cdot \sigma \|\nabla ( u_{k-1} -  u_{k-2})\|
\\
&
+ (1-\beta_k) M \|\nabla (u_k - u_{k-1})\| \|\nabla w_k^\alpha\| 
+ (1-\beta_k) C_A M \|\nabla (u_k - u_{k-1})\| \|\nabla (\tilde u_{k-1} - u_{k-2})\| 
\\
&
+ (1-\beta_k)C_AM\| \nabla (u_{k-1} -\hat u_{k-1})\|^2 
+ (1-\beta_k)C_A M \|\nabla (u_{k-2}- \hat u_{k-2})\|^2.
\end{align*}
Utilizing \eqref{eqn:theta1}, Lemma \ref{lemma10} and the inequality $2ab\le a^2 + b^2 $, we bound 
 the second and fourth terms by 
\begin{align*}
& \beta_k M \|\nabla w_k^\alpha\| \|\nabla \tilde u_k\| + (1-\beta_k) M \|\nabla (u_k - u_{k-1})\| \|\nabla w_k^\alpha\| 
\\
\le &
 \beta_k  \nu^{-1}\|f\|_{-1} M  \theta_k \|\nabla (\tilde u_k - u_{k-1})\|  + (1-\beta_k) M\|\nabla (u_k - u_{k-1})\| \cdot  \theta_k \|\nabla (\tilde u_k - u_{k-1})\| 
 \\
 \le & 
 \beta_k \sigma  \theta_k  MC_{k-1}^{-2}  \|\nabla ( \tilde u_{k} - \hat u_{k-1})\|^2
+ (1-\beta_k) \theta_k \nu^{-1}  M^2 C_{k-1}^{-2}\|\nabla (u_k - u_{k-1})\| \|\nabla ( \tilde u_{k} - \hat u_{k-1})\|^2
\\ 
\le & 
 \beta_k \sigma   \theta_k  MC_{k-1}^{-2}  \|\nabla ( \tilde u_{k} - \hat u_{k-1})\|^2
 + \frac12 (1-\beta_k) \theta_k \nu^{-1}  M^2\|\nabla (u_k - u_{k-1})\|^2 
 \\ &
 + \frac12 (1-\beta_k) \theta_k \nu^{-1}  M^2 C_{k-1}^{-4} \|\nabla ( \tilde u_{k} - \hat u_{k-1})\|^4,
\end{align*}
the third term by 
\begin{align*}
 & \sigma  C_A M\|\nabla (u_k - u_{k-2})\| \|\nabla ( u_{k-1} -  u_{k-2})\| 
 \\
\le & \sigma C_A M   \left( \|\nabla (u_{k} - u_{k-1})\|  \|\nabla (u_{k-1} - u_{k-2})\| +  \|\nabla (u_{k-1} - u_{k-2})\|^2 \right)
\\  \le & 
\frac12  C_A \sigma M  \left( \|\nabla (u_{k} - u_{k-1})\|^2 +3 \|\nabla (u_{k-1} - u_{k-2})\|^2 \right),
\end{align*}
 the fifth term by 
\begin{align*}
&  (1-\beta_k) C_A M \|\nabla (u_k - u_{k-1})\| \|\nabla (\tilde u_{k-1} - u_{k-2})\| 
\\
\le &
(1-\beta_k)C_A \nu^{-1}M^2\|\nabla (u_k - u_{k-1})\| \cdot C_{k-2}^{-2}\|\nabla (\tilde u_{k-1} - \hat u_{k-2})\|^2
\\ 
\le & 
\frac12 (1-\beta_k) C_A \nu^{-1}M^2\|\nabla (u_k - u_{k-1})\|^2
+\frac12 (1-\beta_k) C_A \nu^{-1}M^2 C_{k-2}^{-4} \|\nabla (\tilde u_{k-1} - \hat u_{k-2})\|^4,
\end{align*}
and the last two terms by 
\begin{align*}
& (1-\beta_k)C_A M\| \nabla (u_{k-1} -\hat u_{k-1})\|^2 
+ (1-\beta_k) C_A M \|\nabla (u_{k-2}- \hat u_{k-2})\|^2
\\
\le & 
(1-\beta_k ) C_A M C_{k-1}^{-2} \|\nabla (\tilde u_k -\hat u_{k-1})\|^2 
+ (1-\beta_k ) C_A M C_{k-2}^{-2} \|\nabla (\tilde u_{k-1} -\hat u_{k-2})\|^2.
\end{align*}
Combining the above five inequalities, we obtain
\begin{align*}
&  (1-\sigma)C_k^{-1} \|\nabla (\tilde u_{k+1} - \hat u_k)\| 
\\
\le & 
  \nu^{-1}M C_{k-1}^{-2} \left( \theta_k\beta_k \sigma + (1-\beta_k ) C_A \right) \|\nabla ( \tilde u_{k} - \hat u_{k-1})\|^2 
+  (1-\beta_k ) C_A \nu^{-1} M C_{k-2}^{-2} \|\nabla (\tilde u_{k-1} -\hat u_{k-2})\|^2
\\ &
  + \mathcal{O}( \|\nabla (u_k -u_{k-1})\|^2  +  \|\nabla (u_{k-1} - u_{k-2})\|^2)
  +(1-\beta_k)\cdot \mathcal{O} \left(  \|\nabla (\tilde u_k - \hat u_{k-1})\|^4 
  + \|\nabla (\tilde u_{k-1} - \hat u_{k-2})\|^4\right).
\end{align*}
Applying Lemma \ref{lemma20} recursively to the fourth term yields
\begin{align*}
&  \mathcal{O}( \|\nabla (u_k -u_{k-1})\|^2  +  \|\nabla (u_{k-1} - u_{k-2})\|^2)
\\ \le &
 \mathcal{O}( \|\nabla (u_{k-1} -u_{k-2})\|^4  +  \|\nabla (u_{k-2} - u_{k-3})\|^4)
 + \sum\limits_{j = k-1}^k\mathcal{O}( \|\nabla (\tilde u_j - \hat u_{j-1})\|^4 + \|\nabla (\tilde u_j - \hat u_{j-1})\|^8)
 \\ \le & \cdots 
 \\ \le &
  \mathcal{O}( \|\nabla (u_{2} -u_{1})\|^{2^{k-1}}  +  \|\nabla (u_{1} - u_{0})\|^{2^{k-1}}) 
+ \text{ higher order terms of }  \{\tilde u_j - \hat u_{j-1}\}_{j=2}^k 
\\ \le & 
 \mathcal{O}(\|\nabla (u_{1} - u_{0})\|^{2^{k}} + \|\nabla (u_{1} - u_{0})\|^{2^{k-1}}) 
+ \text{ higher order terms of }  \{\tilde u_j - \hat u_{j-1}\}_{j=2}^k .
\end{align*}
Combining the previous two inequalities yields \eqref{eqn:conv1} and we complete the proof.
\end{proof}

\subsection{AAPicard-Newton $m= 2$}
In this subsection, we analyze the AAPicard-Newton method with $m=2$. 
We add an additional previous residual term $\tilde u_{k-1} - u_{k-2}$ in the Anderson optimization step (Step 2 of Algorithm \ref{alg:aapn1}). Although this modification retains the quadratic convergence order, it improves the convergence rate of the iterative method and may enlarge the domain of convergence. Similarly, we can add more previous residuals in the Anderson step to  shrink the convergence rate further and possibly enlarge the domain of convergence. We will discuss the case $m>2$ in the following subsection.

The algorithm of AAPicard-Newton method with depth $m=2$ is stated below.

\begin{alg}[AAPicard-Newton $m=2$]
\label{alg:aapn2}
The AAPicard-Newton iteration with $m=2$ consists of applying the composition of the Newton and Anderson accelerated Picard iteration for solving  Navier-Stokes equations: $g_{N}\circ g_{AP}$, i.e.,
\begin{enumerate}[Step 1:]
\item[Step 1:] Find $\tilde u_{k+1} = g_P(u_{k})$ by finding $\tilde  u_{k+1}\in V$ 
satisfying \eqref{wpaa} for all $v\in V$.
\item[Step 2:] For a selected damping factor $ 0< \beta_{k+1} \le 1$,  set  
\begin{align}
\label{hatu_dfn2}
\hat u_{k+1} =& \beta_{k+1} \big( (1-\alpha_{k+1}^1 -\alpha_{k+1}^2)  \tilde u_{k+1} + \alpha_{k+1}^1 \tilde u_{k} + \alpha_{k+1}^2 \tilde u_{k-1} \big) \nonumber\\
&+ (1-\beta_{k+1}) \big((1-\alpha_{k+1}^1 -\alpha_{k+1}^2)   u_{k} + \alpha_{k+1}^1  u_{k-1} + \alpha_{k+1}^2  u_{k-2}  \big) 
\nonumber \\
= & 
(1-\alpha_{k+1}^1 -\alpha_{k+1}^2)  \tilde u_{k+1} + \alpha_{k+1}^1 \tilde u_{k} + \alpha_{k+1}^2 \tilde u_{k-1}
- (1-\beta_{k+1} ) w_{k+1,2}^\alpha ,
\end{align}
where $\alpha_{k+1}^1, \alpha_{k+1}^2$ minimizes 
\begin{align}
\label{eqn:min2}
\|\nabla w_{k+1,2}^\alpha \| \coloneqq
\left\|   (1-\alpha_{k+1}^1 -\alpha_{k+1}^2) \nabla (\tilde u_{k+1} - u_k) + \alpha_{k+1}^1 \nabla (\tilde u_k - u_{k-1} ) + \alpha_{k+1}^2 \nabla (\tilde u_{k-1} - u_{k-2}) \right\|.
\end{align}
\item[Step 3:] Find $u_{k+1} = g_N(\hat u_{k+1})$  by finding $u_{k+1}\in V$ satisfying \eqref{wnaa}  for all $v\in V$.
\end{enumerate}
\end{alg}
Similarly, let $\alpha_{k+1}^1, \alpha_{k+1}^2$ minimize \eqref{eqn:min2}, we define the Anderson gain with $m=2$ as
\begin{align}
\label{eqn:theta2}
\theta_{k+1,2} =& \frac{\|\nabla w_{k+1,2}^\alpha \| }{\|\nabla (\tilde u_{k+1} - u_k)\|}
.
\end{align}
Clearly, $0\le \theta_{k+1,2}  \le 1$ and $\theta_{k+1,2} =1$ if and only if $\alpha_{k+1}^1 =\alpha_{k+1}^2 =0$, which implies Algorithm \ref{alg:aapn2} is back to Algorithm \ref{alg:pn}. Moreover,   $\alpha_{k+1}^2 \neq 0$ evinces $\theta_{k+1,2} <\theta_{k+1} \le 1$, where $\theta_{k+1}$ is defined in \eqref{eqn:theta1}. Otherwise, Algorithm \ref{alg:aapn2} is back to either Algorithm \ref{alg:pn} or Algorithm \ref{alg:aapn1}. For the rest of this subsection, we always assume that $\alpha_{k+1}^2 \neq 0.$

Like Assumption \ref{ass:aapn1}, we make an uniform bounded assumption for $\{\alpha_{k+1}^i\}$ as below.
\begin{assumption}[$m=2$]
\label{ass:aapn2}
There exists a constant $C_A>0$ such that for all $k$
\begin{align}
|1-\alpha_{k+1}^1 - \alpha_{k+1}^2| + |\alpha_{k+1}^1| + |\alpha_{k+1}^2| & \le C_A,
\label{eqn:aa2} 
\end{align}
\end{assumption}

Compared to Algorithm \ref{alg:aapn1}, Step 1 and Step 3 from Algorithm \ref{alg:aapn2} are unchanged. We assume that Assumption \ref{ass:ukbd} holds in this subsection, and then the following results from the previous subsection are also satisfied here: Lemma \ref{lemma:pic}, equation \eqref{aaukbd}, Remark \ref{stab1}, and Lemma \ref{lemma10}.

  Nevertheless, we need to update equations \eqref{eqn:haterr}, Lemma \ref{lemma20} as below.
From \eqref{hatu_dfn2}, triangle inequality and followed by \eqref{eqn:picerr} and \eqref{eqn:theta2}, we have
 \begin{align}
 \label{eqn:haterrm2}
\|\nabla (\hat u_k - \hat u_{k-1})\| \le
& 
\|\nabla (\tilde u_k  - \hat u_{k-1})\| 
+\sigma C_A\|\nabla (u_{k-1} - u_{k-2})\| 
\nonumber\\
&
+ \sigma C_A\|\nabla (u_{k-2} - u_{k-3})\| 
+ (1-\beta_k) \theta_{k,2} \|\nabla (\tilde u_k -u_{k-1})\|.
\end{align}
\begin{lemma}[$m=2$]
\label{lemma20m2}
Let Assumption \ref{ass:aapn2}, \ref{ass:ukbd} and $\sigma C_A \max\{1,L\} <1$ hold,
then we have
\begin{align}
\|\nabla (u_{k+1} - u_k)\| \le &
\frac{8\nu^{-1}M\sigma^2 C_A^2}{1-\sigma C_A\max\{1,L\}} \left( \|\nabla (u_k - u_{k-1})\|^2 +  \|\nabla (u_{k-1} - u_{k-2})\|^2 \right)
\nonumber\\ &
+ 
\frac{ \nu^{-1}M (C_k^{-2} + 8)}{1-\sigma C_A\max\{1,L\} }  \|\nabla (\tilde u_{k+1} - \hat u_k)\|^2 
 + \mathcal{O}(\|\nabla (\tilde u_{k+1}- \hat u_k)\|^4).
\label{eqn:newterrm2}
\end{align} 
\end{lemma}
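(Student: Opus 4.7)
The structure of the argument will be a close parallel to the proof of Lemma \ref{lemma20} (the $m=1$ version), with the only real change being that the extra previous residual included in the depth-$2$ Anderson step propagates an additional $\|\nabla(u_{k-1}-u_{k-2})\|^2$ term into the final bound. The plan is as follows.

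First, I would subtract equation \eqref{wnaa} at step $k$ from \eqref{wnaa} at step $k+1$; this produces the identity \eqref{eqn:equi03} unchanged, because the Newton step (Step 3) has exactly the same form for $m=1$ and $m=2$. Next I would test with $v = u_{k+1} - u_k$, which eliminates the $b^*(\hat u_{k+1}, u_{k+1} - u_k, v)$ term by skew-symmetry \eqref{bsym}. Then I would apply the trilinear bound \eqref{bstarbound}, the global bound \eqref{aaukbd} on $\|\nabla \hat u_{k+1}\|$, the triangle inequality, and $2ab\le a^2+b^2$, exactly as in Lemma \ref{lemma20}, to arrive at the intermediate estimate
\begin{equation*}
\nu(1-\sigma C_A\max\{1,L\})\|\nabla(u_{k+1}-u_k)\|
\le M\|\nabla(u_k-\hat u_k)\|^2 + 2M\|\nabla(\hat u_{k+1}-\hat u_k)\|^2.
\end{equation*}

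The only real substitution that differs from the $m=1$ case comes next: I would invoke \eqref{eqn:haterrm2} (with indices shifted by one) instead of \eqref{eqn:haterr} to control $\|\nabla(\hat u_{k+1}-\hat u_k)\|$. Because this bound now has four summands on the right-hand side rather than three, I would square it using $(a+b+c+d)^2\le 4(a^2+b^2+c^2+d^2)$, which replaces the factor $6$ appearing in the proof of Lemma \ref{lemma20} by $8$, explaining the coefficient in \eqref{eqn:newterrm2}. This produces terms of the form $\|\nabla(\tilde u_{k+1}-\hat u_k)\|^2$, $\sigma^2 C_A^2\|\nabla(u_k-u_{k-1})\|^2$, $\sigma^2 C_A^2\|\nabla(u_{k-1}-u_{k-2})\|^2$, and a remaining term $(1-\beta_{k+1})^2\theta_{k+1,2}^2\|\nabla(\tilde u_{k+1}-u_k)\|^2$.

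Finally, I would handle the two residual terms that are not already of the required form via Lemma \ref{lemma10}: the relation \eqref{eqn:equiv} gives $\|\nabla(u_k-\hat u_k)\|^2 = C_k^{-2}\|\nabla(\tilde u_{k+1}-\hat u_k)\|^2$ (which combines with the $2M\cdot 4$ coefficient to yield the $M(C_k^{-2}+8)$ factor), and the quadratic estimate \eqref{eqn:quad1} gives $\|\nabla(\tilde u_{k+1}-u_k)\|^2 \le \nu^{-2}M^2 C_k^{-4}\|\nabla(\tilde u_{k+1}-\hat u_k)\|^4$, which absorbs the stray $(1-\beta_{k+1})^2\theta_{k+1,2}^2$ contribution into the $\mathcal{O}(\|\nabla(\tilde u_{k+1}-\hat u_k)\|^4)$ remainder. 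Dividing through by $\nu(1-\sigma C_A\max\{1,L\})$ then produces exactly \eqref{eqn:newterrm2}. The main obstacle is purely bookkeeping—tracking coefficients through the squaring step and verifying that the extra residual term $\|\nabla(u_{k-1}-u_{k-2})\|^2$ introduced by depth $m=2$ appears with the constant $8\nu^{-1}M\sigma^2 C_A^2/(1-\sigma C_A\max\{1,L\})$; no new analytical idea beyond the ingredients already developed for $m=1$ is required.
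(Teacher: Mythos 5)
Your proposal is correct and is exactly the argument the paper intends: it explicitly omits the proof of Lemma \ref{lemma20m2}, stating that it mirrors Lemma \ref{lemma20} with \eqref{eqn:haterr} replaced by \eqref{eqn:haterrm2}, and you have filled in the details faithfully, including the correct index shift and the change of the constant from $6=2\cdot 3$ to $8=2\cdot 4$ coming from the four-term bound. No gaps.
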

The proof is similar to Lemma \ref{lemma20} except replacing inequality \eqref{eqn:haterr} by \eqref{eqn:haterrm2}, and therefore omitted.

\begin{thm}[$m=2$]
\label{thm:aapn21}
Let $\alpha_k^2 \neq0$, Assumptions \ref{ass:aapn2} and \ref{ass:ukbd}, and $\sigma, \sigma C_A \max\{1,L\} <1$, we have 
\begin{multline}
\label{eqn:conv2}
\|\nabla (\tilde u_{k+1} - \hat u_k)\| \le 
\frac{\nu^{-1}MC_{k-1}^{-2}C_k}{1-\sigma} \left( \beta_k \sigma  \theta_{k,2} + (1-\beta_k) C_A \right) \|\nabla (\tilde u_k -\hat u_{k-1})\|^2
\\
+(1-\beta_k) \frac{C_A \nu^{-1}M}{1-\sigma} \sum\limits_{j =k-2}^{k-1} (C_{j-1}^{-2} \|\nabla (\tilde u_{j} -\hat u_{j-1})\|^2) 
\\
+ \text{higher order terms of } \{\tilde u_j - \hat u_{j-1}\}_{j=2}^{k} \text{ and } u_1 -u_0,
\end{multline}
where $\theta_{k,2}, C_A, L$ are defined in \eqref{eqn:theta2}, \eqref{eqn:aa2} and \eqref{ukbd2_beta} respectively, and $C_k$ are some constants independent of $\nu, h$.
\end{thm}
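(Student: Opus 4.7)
The plan is to parallel the proof of Theorem \ref{thm:aapn11}, adjusting the bookkeeping for the three-term combination of Picard residuals that enters the $m=2$ Anderson step. The overall strategy is: construct a variational equation for $u_k - \hat u_k$, test it against $v = u_k - \hat u_k$ to kill the leading convective term via skew-symmetry, and then convert the resulting bounds into powers of $\|\nabla(\tilde u_j - \hat u_{j-1})\|$ using Lemma \ref{lemma10}, finally absorbing the leftover $\|\nabla(u_j - u_{j-1})\|$ residues into higher-order tails by iterating Lemma \ref{lemma20m2}.

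Concretely, from \eqref{hatu_dfn2} I first rewrite
\[
u_k - \hat u_k = (u_k - \tilde u_k) + \alpha_k^1(\tilde u_k - \tilde u_{k-1}) + \alpha_k^2(\tilde u_k - \tilde u_{k-2}) + (1-\beta_k)\, w_{k,2}^\alpha.
\]
Starting from equation \eqref{eqn:betak01} and forming the weighted sum with weights $(1-\alpha_k^1-\alpha_k^2),\,\alpha_k^1,\,\alpha_k^2$ at indices $j=k,k-1,k-2$, I obtain the $m=2$ analog of \eqref{eqn:betak02}: a variational equation for $w_{k,2}^\alpha$ whose right-hand side carries the transport term $b^*(u_{k-1}, w_{k,2}^\alpha, v)$, two Picard-shift cross terms $-\alpha_k^1 b^*(u_k - u_{k-1}, \tilde u_{k-1} - u_{k-2}, v)$ and $-\alpha_k^2 b^*(u_k - u_{k-2}, \tilde u_{k-2} - u_{k-3}, v)$, and three quadratic residual terms $b^*(u_{j-1} - \hat u_{j-1}, u_{j-1} - \hat u_{j-1}, v)$ for $j=k, k-1, k-2$. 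Combining this with the analogously weighted sum of \eqref{wpaa} and with \eqref{wnaa} at iteration $k$ yields a variational identity for $u_k - \hat u_k$. Testing with $v = u_k - \hat u_k$ and invoking \eqref{bsym} kills $b^*(\hat u_k, u_k - \hat u_k, u_k - \hat u_k)$, while the remaining $b^*(u_k - \hat u_k, \tilde u_k, v)$ contributes the coercivity factor $(1-\sigma)$ on the left, exactly as in the $m=1$ case. Each right-hand term is then bounded using \eqref{bstarbound}, \eqref{eqn:aa2}, Lemma \ref{lemma:pic}, and the gain identity \eqref{eqn:theta2}: the leading quadratic term carries the coefficient $\beta_k \sigma \theta_{k,2} + (1-\beta_k) C_A$ times $\|\nabla(u_{k-1} - \hat u_{k-1})\|^2$, while the $\alpha_k^1$ and $\alpha_k^2$ weights produce two additional $(1-\beta_k)$-proportional quadratic terms $C_A\|\nabla(u_{k-2} - \hat u_{k-2})\|^2$ and $C_A\|\nabla(u_{k-3} - \hat u_{k-3})\|^2$. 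Converting each $\|\nabla(u_{j-1} - \hat u_{j-1})\|$ into $C_{j-1}\|\nabla(\tilde u_j - \hat u_{j-1})\|$ via Lemma \ref{lemma10} produces exactly the sum over $j=k-2,k-1$ in \eqref{eqn:conv2} together with the leading $j=k$ term, and the residual $\|\nabla(u_j-u_{j-1})\|$-type pieces are pushed into higher-order tails by recursive application of Lemma \ref{lemma20m2}, which carries one extra sibling term compared with Lemma \ref{lemma20} but still bottoms out at $\|\nabla(u_1 - u_0)\|^{2^j}$ type contributions.

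The main obstacle I anticipate is the cancellation step: verifying that the three-term combination
\[
-b^*(u_k, \hat u_k, v) + b^*\bigl((1-\alpha_k^1-\alpha_k^2)\tilde u_k + \alpha_k^1 \tilde u_{k-1} + \alpha_k^2 \tilde u_{k-2}, \tilde u_k, v\bigr)
-\alpha_k^1 b^*(u_{k-2}, \tilde u_k - \tilde u_{k-1}, v) - \alpha_k^2 b^*(u_{k-3}, \tilde u_k - \tilde u_{k-2}, v)
\]
collapses cleanly into $-b^*(u_k - \hat u_k, \tilde u_k, v)$ plus explicit $(1-\beta_k)$-remainders and benign cross terms of type $\alpha_k^i\, b^*(u_k - u_{k-1-i}, \tilde u_k - \tilde u_{k-i}, v)$, each of which is either directly quadratic in $(u_j - u_{j-1})$ differences or can be bounded by $\sigma$ times such differences via \eqref{eqn:picerr}. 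This is the direct three-term analog of the rearrangement performed in the $m=1$ proof, and once it is in place the remaining estimates transcribe essentially verbatim from the proof of Theorem \ref{thm:aapn11}.
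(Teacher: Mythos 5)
Your proposal follows essentially the same route as the paper's proof: the same rewriting of $u_k-\hat u_k$ from \eqref{hatu_dfn2}, the same weighted combination of \eqref{eqn:betak01} and \eqref{wpaa} with weights $(1-\alpha_k^1-\alpha_k^2),\alpha_k^1,\alpha_k^2$ to build a variational identity for $u_k-\hat u_k$, the same test function and skew-symmetry cancellation, the same term-collapse into $-b^*(u_k-\hat u_k,\tilde u_k,v)$ plus $(1-\beta_k)$-remainders and the cross terms $\alpha_k^i b^*(u_k-u_{k-1-i},\tilde u_k-\tilde u_{k-i},v)$, and the same conversion via Lemma \ref{lemma10} and recursive absorption via Lemma \ref{lemma20m2}. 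The only discrepancies are immaterial index shifts in the correction terms of the $w_{k,2}^\alpha$ equation (e.g.\ $u_k-u_{k-1}$ versus $u_{k-1}-u_{k-2}$ as a first argument), which in either form end up in the higher-order tail, so the argument is sound and matches the paper.
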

\begin{remark}
Similar to Theorem \ref{thm:aapn11}, Algorithm \ref{alg:aapn2} converges quadratically, and $\beta_{k+1}=1$ optimizes the convergence and provides global stability of the solutions when $\sigma, \sigma C_A<1$, and 
 \eqref{eqn:conv2} reduces to  
\begin{multline}
\label{eqn:conv2beta}
\|\nabla (\tilde u_{k+1} - \hat u_k)\| \le  
 \frac{ \nu^{-1}M C_{k-1}^{-2}C_k}{1-\sigma}\sigma  \theta_{k,2} \|\nabla ( \tilde u_{k} - \hat u_{k-1})\|^2 
\\+ \text{higher order terms of } \{ \tilde u_j - \hat u_{j-1} \}_{j=2}^k \text{ and } u_1 - u_0.
\end{multline}
From the inequality, we can tell that Algorithm \ref{alg:aapn2} is superior to Algorithm \ref{alg:pn} as $\theta_{k,2} <1$ when $\alpha_k^2\neq 0$ and $Re$ large enough. Otherwise, the higher-order terms may be dominant and decelerate the convergence.
Comparing \eqref{eqn:conv1beta} and \eqref{eqn:conv2beta},  we conclude that Algorithm \ref{alg:aapn2} is better than Algorithm \ref{alg:aapn1} for large Reynolds numbers as $\theta_{k,2} < \theta_k \le 1$. 
\end{remark}
\begin{proof}
From \eqref{hatu_dfn2}, 
we rewrite $ u_k - \hat u_k$ as 
\begin{align}
u_k -\hat u_k = &
u_k -(1-\alpha_k^1 -\alpha_k^2) \tilde u_k -\alpha_k^1 \tilde u_{k-1} -\alpha_k^2 \tilde u_{k-2} + (1-\beta_k)w_{k,2}^\alpha.
\label{eqn:m2eq1}
\end{align}
and would like to  construct an equation of $u_k - \hat u_k$ below. Adding  equation $(1-\alpha_k^1- \alpha_k^2) \times$ \eqref{eqn:betak01} with $j=k$, $\alpha_k^1\times $ \eqref{eqn:betak01} with $j=k-1$, and $\alpha_k^2\times$ \eqref{eqn:betak01} with $j=k-2$, we obtain an equation of $w_{k,2}^{\alpha}$
\begin{multline}
\nu (\nabla w_{k,2}^{\alpha}, \nabla v) 
+ b^*(u_{k-1}, w_{k,2}^{\alpha},v) 
- \alpha_k^1 b^*(u_{k-1}-u_{k-2} ,  \tilde u_{k-1}-u_{k-2},v)
 \\
- \alpha_k^2 b^*(u_{k-1}-u_{k-3}, \tilde u_{k-2} -u_{k-3},v) 
 + (1-\alpha_k^1 -\alpha_k^2) b^*(u_{k-1}-\hat u_{k-1}, u_{k-1}-\hat u_{k-1}, v) 
 \\
 + \alpha_k^1 b^*(u_{k-2} -\hat u_{k-2}, u_{k-2} - \hat u_{k-2}, v)
 +\alpha_k^2 b^*(u_{k-3}-\hat u_{k-3}, u_{k-3}-\hat u_{k-3},v)
 =0.
 \label{eqn:thm2eq1}
\end{multline}
Adding equation $(1-\alpha_k^1- \alpha_k^2) \times$ \eqref{wpaa} with $j=k$, $\alpha_k^1\times $ \eqref{wpaa} with $j=k-1$, and $\alpha_k^2\times$ \eqref{wpaa} with $j=k-2$ yields
\begin{multline*}
\nu (\nabla ((1-\alpha_k^1 -\alpha_k^2) \tilde u_k + \alpha_k^1 \tilde u_{k-1} +\alpha_k^2 \tilde u_{k-2} ), \nabla v) 
+  b^*((1-\alpha_k^1 -\alpha_k^2) \tilde u_k + \alpha_k^1 \tilde u_{k-1} +\alpha_k^2 \tilde u_{k-2} , \tilde u_k,v)
\\
-b^*(w_{k,2}^\alpha,\tilde u_k,v)
- \alpha_k^1 b^*(u_{k-2}, \tilde u_k - \tilde u_{k-1},v) 
- \alpha_k^2 b^*(u_{k-3}, \tilde u_k- \tilde u_{k-2}, v)
= \langle f,v\rangle.
\end{multline*}
Subtracting it from the sum of equation \eqref{wnaa} with $u_k$ and $(1-\beta_k)\times $\eqref{eqn:thm2eq1} produces an equation of $u_k -\hat u_k$
\begin{multline*}
\nu (\nabla (u_k -\hat u_k), \nabla v) 
= 
- b^*(\hat u_k, u_k -\hat u_k, v)
 - b^*(u_k, \hat u_k,v)
 +  b^*((1-\alpha_k^1 -\alpha_k^2) \tilde u_k + \alpha_k^1 \tilde u_{k-1} +\alpha_k^2 \tilde u_{k-2} , \tilde u_k,v)
 \\
 -b^*(w_{k,2}^\alpha,\tilde u_k,v)
- \alpha_k^1 b^*(u_{k-2}, \tilde u_k - \tilde u_{k-1},v) 
- \alpha_k^2 b^*(u_{k-3}, \tilde u_k- \tilde u_{k-2}, v)
- (1-\beta_k )b^*(u_{k-1}, w_{k,2}^{\alpha},v) 
\\
+ (1-\beta_k )\alpha_k^1 b^*(u_{k-1}-u_{k-2} ,  \tilde u_{k-1}-u_{k-2},v)
+(1-\beta_k )\alpha_k^2 b^*(u_{k-1}-u_{k-3}, \tilde u_{k-2} -u_{k-3},v) 
\\
 - (1-\beta_k )(1-\alpha_k^1 -\alpha_k^2) b^*(u_{k-1}-\hat u_{k-1}, u_{k-1}-\hat u_{k-1}, v) 
 -(1-\beta_k ) \alpha_k^1 b^*(u_{k-2} -\hat u_{k-2}, u_{k-2} - \hat u_{k-2}, v)
 \\
 -(1-\beta_k )\alpha_k^2 b^*(u_{k-3}-\hat u_{k-3}, u_{k-3}-\hat u_{k-3},v).
\end{multline*}
Combining the 3rd, 4th, 6th and 7th terms leads to 
\begin{align*}
 &- b^*(u_k, \hat u_k,v)
 +  b^*((1-\alpha_k^1 -\alpha_k^2) \tilde u_k + \alpha_k^1 \tilde u_{k-1} +\alpha_k^2 \tilde u_{k-2} , \tilde u_k,v)
 \\
&- \alpha_k^1 b^*(u_{k-2}, \tilde u_k - \tilde u_{k-1},v) 
- \alpha_k^2 b^*(u_{k-3}, \tilde u_k- \tilde u_{k-2}, v)
\\
= & 
 - b^*( u_k -\hat u_k - (1-\beta_k)w_{k,2}^\alpha, \tilde u_k, v) 
 - b^*(u_k, \hat u_k - \tilde u_k, v) 
  \\
&- \alpha_k^1 b^*(u_{k-2}, \tilde u_k - \tilde u_{k-1},v) 
- \alpha_k^2 b^*(u_{k-3}, \tilde u_k- \tilde u_{k-2}, v)
\\
=&
-b^*(u_k - \hat u_k, \tilde u_k,v) 
+ (1-\beta_k) b^*(w_{k,2}^\alpha, \tilde u_k, v)
+(1-\beta_k)b^*(u_k, w_{k,2}^\alpha,v)
\\&
+\alpha_k^1 b^*(u_k -u_{k-2}, \tilde u_k - \tilde u_{k-1},v)
+ \alpha_k^2 b^*(u_k - u_{k-3}, \tilde u_k - \tilde u_{k-2},v),
\end{align*}
thanks to \eqref{hatu_dfn2}, and then update the equation of $u_k - \hat u_k$ to
\begin{multline*}
\nu (\nabla (u_k -\hat u_k), \nabla v) 
= 
- b^*(\hat u_k, u_k -\hat u_k, v)
 -\beta_k b^*(w_{k,2}^\alpha,\tilde u_k,v)
-b^*(u_k - \hat u_k, \tilde u_k,v) 
\\
+(1-\beta_k)b^*(u_k- u_{k-1}, w_{k,2}^\alpha,v)
+\alpha_k^1 b^*(u_k -u_{k-2}, \tilde u_k - \tilde u_{k-1},v)
+ \alpha_k^2 b^*(u_k - u_{k-3}, \tilde u_k - \tilde u_{k-2},v)
\\
+(1-\beta_k ) \alpha_k^1 b^*(u_{k-1}-u_{k-2} ,  \tilde u_{k-1}-u_{k-2},v)
+(1-\beta_k )\alpha_k^2 b^*(u_{k-1}-u_{k-3}, \tilde u_{k-2} -u_{k-3},v) 
\\
 -(1-\beta_k ) (1-\alpha_k^1 -\alpha_k^2) b^*(u_{k-1}-\hat u_{k-1}, u_{k-1}-\hat u_{k-1}, v) 
 -(1-\beta_k ) \alpha_k^1 b^*(u_{k-2} -\hat u_{k-2}, u_{k-2} - \hat u_{k-2}, v)
 \\
 -(1-\beta_k )\alpha_k^2 b^*(u_{k-3}-\hat u_{k-3}, u_{k-3}-\hat u_{k-3},v).
\end{multline*}
Setting $v = u_k -\hat u_k$ eliminate the second term. Applying \eqref{bstarbound}, \eqref{eqn:aa2}, Lemma \ref{lemma:pic} and 
\eqref{eqn:theta2} gives
\begin{align*}
& \nu(1-\sigma ) \|\nabla (u_k - \hat u_k ) \|
\\
\le & 
\beta_k \nu \sigma \theta_{k,2}\|\nabla (\tilde u_k -u_{k-1})\|
+(1-\beta_k)M \|\nabla (u_k - u_{k-1}) \|  \cdot \theta_{k,2} \|\nabla (\tilde u_k -u_{k-1})\|
\\
&
+C_A M\|\nabla (u_k - u_{k-2})\| \cdot \sigma \|\nabla (u_{k-1} -u_{k-2} )\|
+ C_A M \|\nabla (u_k -u_{k-3})\| \cdot \sigma \|\nabla ( u_{k-1} - u_{k-3})\|
\\& 
+(1-\beta_k) C_AM \|\nabla (u_{k-1} - u_{k-2}) \|\|\nabla (\tilde u_{k-1} -u_{k-2})\| 
\\&
+ (1-\beta_k) C_A M \|\nabla (u_{k-1} -u_{k-3})\| \|\nabla (\tilde u_{k-2}- u_{k-3})\| 
+(1-\beta_k) C_A M \sum\limits_{j =k-3}^{k-1}\|\nabla (u_{j} -\hat u_{j})\|^2 .
\end{align*}
Utilizing triangle inequality, $(a+b)^2 \le 2 (a^2+ b^2)$ and $2ab\le a^2 + b^2$, we bound the RHS term-wisely as below
\begin{align*}
 & C_A M\|\nabla (u_k - u_{k-2})\| \cdot \sigma \|\nabla (u_{k-1} -u_{k-2} )\|
 \\ \le & 
 \sigma C_A M \left(  \|\nabla (u_k - u_{k-1})\|\|\nabla (u_{k-1} - u_{k-2})\| + \|\nabla (u_{k-1} - u_{k-2})\|^2\right)
 \\ \le & 
  \frac32 \sigma C_A M \|\nabla (u_{k-1} -u_{k-2})\|^2  
+ \frac12 \sigma C_AM \|\nabla (u_k - u_{k-1})\|^2,
\end{align*}
and
\begin{align*}
& C_A M \|\nabla (u_k -u_{k-3})\| \cdot \sigma \|\nabla ( u_{k-1} - u_{k-3})\|
\\ \le & 
\sigma C_A M \left(  \|\nabla ( u_{k} - u_{k-1})\| \cdot  \|\nabla ( u_{k-1} - u_{k-3})\| +  \|\nabla ( u_{k-1} - u_{k-3})\|^2 \right) 
\\ \le & 
  \frac32 \sigma C_A M ( \|\nabla (u_{k-1} -u_{k-2})\| + \|\nabla (u_{k-2}- u_{k-3})\| )^2  
+ \frac12 \sigma C_AM \|\nabla (u_k - u_{k-1})\|^2
\\ \le & 
 3 \sigma C_A M \|\nabla (u_{k-2} -u_{k-3})\|^2  
+ 3 \sigma C_A M \|\nabla (u_{k-1} -u_{k-2})\|^2
+ \frac12 \sigma C_AM \|\nabla (u_k - u_{k-1})\|^2.
\end{align*}
Now using Lemma \ref{lemma10}, we have the following bounds
\begin{align*}
 & \beta_k \nu \sigma \theta_{k,2}\|\nabla (\tilde u_k -u_{k-1})\|
+(1-\beta_k)M \|\nabla (u_k - u_{k-1}) \|  \cdot \theta_{k,2} \|\nabla (\tilde u_k -u_{k-1})\|
\\ \le & 
\beta_k \sigma MC_{k-1}^{-2} \theta_{k,2} \|\nabla (\tilde u_k -\hat u_{k-1})\|^2
+ \frac12 (1-\beta_k) \nu M^2 C_{k-1}^{-2} \theta_{k,2}  \|\nabla (\tilde u_k -\hat u_{k-1})\|^4
%\\ & 
+\mathcal{O}(\|\nabla (u_k -u_{k-1})\|^2), 
\\
 & (1-\beta_k) C_AM \|\nabla (u_{k-1} - u_{k-2}) \|\|\nabla (\tilde u_{k-1} -u_{k-2})\| 
 \\ \le & 
 (1-\beta_k) C_A \nu^{-1}M^2 C_{k-2}^{-2} \|\nabla (u_{k-1} - u_{k-2}) \|  \|\nabla (\tilde u_{k-1} -\hat u_{k-2})\|^2
 \\ \le &  
 \mathcal{O}(\|\nabla (u_{k-1} - u_{k-2})\|^2 + \|\nabla (\tilde u_{k-1} -\hat u_{k-2})\|^4),
\\
 &  (1-\beta_k) C_A M \|\nabla (u_{k-1} -u_{k-3})\| \|\nabla (\tilde u_{k-2}- u_{k-3})\| 
 \\ \le & 
  (1-\beta_k) C_A M  \|\nabla (u_{k-1} -u_{k-3})\|  \cdot \nu^{-1}M C_{k-3}^{-2} \|\nabla (\tilde u_{k-2}- \hat u_{k-3})\|^2
  \\ \le & 
(1-\beta_k) C_A \nu^{-1}M^2 C_{k-3}^{-2}  ( \|\nabla (u_{k-1} -u_{k-2})\| +  \|\nabla (u_{k-2} -u_{k-3})\|)^2
+ \mathcal{O}(\|\nabla (\tilde u_{k-2}- \hat u_{k-3})\|^4)
\\ \le & 
\mathcal{O}( \|\nabla (u_{k-1} -u_{k-2})\|^2 +  \|\nabla (u_{k-2} -u_{k-3})\|^2)
+ \mathcal{O}( \|\nabla (\tilde u_{k-2}- \hat u_{k-3})\|^4), 
 \end{align*}
 and 
 \begin{align*}
  (1-\beta_k) C_A M \sum\limits_{j =k-3}^{k-1}\|\nabla (u_{j} -\hat u_{j})\|^2 
\le 
(1-\beta_k) C_A M \sum\limits_{j =k-3}^{k-1} C_j^{-2}\|\nabla (\tilde u_{j+1} -\hat u_{j})\|^2. 
 \end{align*}
Combining the above seven inequalities, we obtain
\begin{multline*}
 \nu(1-\sigma ) C_k^{-1} \|\nabla (\tilde u_{k+1} - \hat u_k ) \|
\le 
MC_{k-1}^{-2} \left( \beta_k \sigma  \theta_{k,2} + (1-\beta_k) C_A \right) \|\nabla (\tilde u_k -\hat u_{k-1})\|^2
\\
+(1-\beta_k) C_AM \sum\limits_{j =k-2}^{k-1} (C_{j-1}^{-2} \|\nabla (\tilde u_{j} -\hat u_{j-1})\|^2) 
+  \sum\limits_{j=k-2}^k\mathcal{O}(  \|\nabla (u_{j} -u_{j-1})\|^2 )
+\sum\limits_{j = k-2}^k \mathcal{O}( \|\nabla (\tilde u_{j}- \hat u_{j-1})\|^4).
\end{multline*}
Applying Lemma \ref{lemma20m2} recursively to the fourth term produces
\begin{align*}
 & \sum\limits_{j=k-2}^k\mathcal{O}(  \|\nabla (u_{j} -u_{j-1})\|^2 )
 \\ \le  & 
\sum\limits_{j=k-3}^{k-1} \mathcal{O}(  \|\nabla (u_{j} -u_{j-1})\|^4 ) 
+\sum\limits_{j=k-2}^{k} \mathcal{O} (\|\nabla (\tilde u_j -\hat u_{j-1})\|^4 + \|\nabla (\tilde u_j -\hat u_{j-1})\|^8 )
\\ \le &
\sum\limits_{j=k-4}^{k-2} \mathcal{O}(  \|\nabla (u_{j} -u_{j-1})\|^{2^3} ) 
+ \sum\limits_{j=k-3}^{k-1} \mathcal{O} (\|\nabla (\tilde u_j -\hat u_{j-1})\|^{2^3} + \|\nabla (\tilde u_j -\hat u_{j-1})\|^{2^4}  )
\\&
+\sum\limits_{j=k-2}^{k} \mathcal{O} (\|\nabla (\tilde u_j -\hat u_{j-1})\|^4 + \|\nabla (\tilde u_j -\hat u_{j-1})\|^8 )
\\ 
\le &  \cdots 
\\ \le &
\sum\limits_{j=1}^{3} \mathcal{O}(  \|\nabla (u_{j} -u_{j-1})\|^{2^{k-2}} ) 
+ \text{ higher order terms of }\{\tilde u_j - \hat u_{j-1}\}_{j=2}^{k}
\\
\le & 
\sum\limits_{j=1}^{3} \mathcal{O}(  \|\nabla (u_{1} -u_{0})\|^{2^{k-3+j}} ) 
+ \text{ higher order terms of }\{\tilde u_j - \hat u_{j-1}\}_{j=2}^{k}.
\end{align*}
Combining the previous two inequalities and then dividing both sides by $\nu(1-\sigma) C_k^{-1}$ yields \eqref{eqn:conv2}. We complete the proof.
\end{proof}

\subsection{AAPicard-Newton $m >2$}
 By adding more previous residual terms in the optimization step (Step 2 in Algorithm \ref{alg:aapn1}, \ref{alg:aapn2}), we obtain the general AAPicard-Newton algorithm. Similar results can be obtained with additional analyses; thus, the proof is omitted here.
Now we present the algorithm and general convergence result of the AAPicard-Newton method below.
\begin{alg}[$m = 1, 2,3, \dots$]
\label{alg:aapnm}
The AAPicard-Newton iteration with $m=1,2,\dots$ consists of applying the composition of the Newton and Anderson accelerated Picard iteration for solving  Navier-Stokes equations: $g_{N}\circ g_{AP}$, i.e.,
\begin{enumerate}[Step 1:]
\item[Step 1:] Find $\tilde u_{k+1} = g_P(u_{k})$ by finding $\tilde  u_{k+1}\in V$ 
satisfying \eqref{wpaa} for all $v\in V$.
\item[Step 2:] For $0<\beta_{k+1} \le 1,$ let $m_k = \min\{k-1,m\}$, and  
\begin{multline*}
\hat u_{k+1} = \beta_{k+1} \left( \left(1- \sum\limits_{j =1}^{m_k}\alpha_{k+1}^j \right)  \tilde u_{k+1} + \sum\limits_{j =1}^{m_k }\alpha_{k+1}^j \tilde u_{k+1-j} \right) \\
+ (1-\beta_{k+1}) \left(  \left(1- \sum\limits_{j =1}^{m_k}\alpha_{k+1}^j \right)  u_{k} + \sum\limits_{j =1}^{m_k }\alpha_{k+1}^j  u_{k-j}\right)
\\
= \left(1- \sum\limits_{j =1}^{m_k}\alpha_{k+1}^j \right)  \tilde u_{k+1} + \sum\limits_{j =1}^{m_k }\alpha_{k+1}^j \tilde u_{k+1-j}  - (1-\beta_{k+1}) w_{k+1,m}^\alpha,
\end{multline*}
where $\alpha_{k+1}^j$ minimizes 
\begin{align*}
\|\nabla w_{k+1,m}^\alpha \| \coloneqq
\left\|   \left(1-\sum\limits_{j=1}^{m_k}\alpha_{k+1}^j \right) \nabla (\tilde u_{k+1} - u_k) + \sum\limits_{j=1}^{m_k } \alpha_{k+1}^j \nabla (\tilde u_{k+1-j} - u_{k-j} )  \right\|.
\end{align*}
\item[Step 3:] Find $u_{k+1} = g_N(\hat u_{k+1})$  by finding $u_{k+1}\in V$ satisfying \eqref{wnaa}  for all $v\in V$.
\end{enumerate}
\end{alg}
Clearly, Algorithm \ref{alg:aapnm} reduces to a lower depth Anderson acceleration or Algorithm \ref{alg:pn} if $\alpha_{k+1}^{m_k} =0$. Let $\alpha_{k+1}^j$ minimize $\|\nabla w_{k+1,m}^\alpha\| $ and define 
\begin{align}
\label{eqn:thetam}
\theta_{k+1,m} = \|\nabla w_{k+1,m}^\alpha \| / \|\nabla (\tilde u_{k+1} - \hat u_k)\| .
\end{align}
Thus $\theta_{k+1,m} <1$ if $\alpha_{k+1}^{m_k} \neq 0.$ Furthermore, 
we assume $\alpha_{k+1}^j$ satisfying the inequality 
\begin{align}
\label{eqn:aam}
  \left| 1-\sum\limits_{j=1}^{m_k}\alpha_{k+1}^j \right| + \sum\limits_{j=1}^{m_k } | \alpha_{k+1}^j | \le C_A,
\end{align} for some constant $C_A>0.$
With similar analyses as above, we have the quadratic convergence of AAPicard-Newton method.
\begin{thm}[$m = 1,2,3,\dots,$]
\label{thm:aapnm}
Let $\alpha_k^{m}\neq 0$, \eqref{eqn:aam}, Assumption \ref{ass:ukbd} and $\sigma C_A \max\{1,L\}, \sigma <1$ hold,
then we have
\begin{multline}
\label{eqn:convm}
\|\nabla (\tilde u_{k+1} - \hat u_k)\| \le 
\frac{\nu^{-1}MC_{k-1}^{-2}C_k}{1-\sigma} \left( \beta_k \sigma  \theta_{k,m} + (1-\beta_k) C_A \right) \|\nabla (\tilde u_k -\hat u_{k-1})\|^2
\\
+(1-\beta_k) \frac{C_A \nu^{-1}M}{1-\sigma} \sum\limits_{j =k-m}^{k-1} (C_{j-1}^{-2} \|\nabla (\tilde u_{j} -\hat u_{j-1})\|^2) 
\\
+ \text{higher order terms of } \{\tilde u_j - \hat u_{j-1}\}_{j=2}^{k} \text{ and } u_1 -u_0,
\end{multline}
where $\theta_{k,m}, C_A, L$ are defined in \eqref{eqn:thetam}, \eqref{eqn:aam} and \eqref{ukbd2_beta} respectively, and $C_k$ are some constants independent of $\nu, h$.
\end{thm}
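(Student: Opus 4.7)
The plan is to follow the same three-step template established in the proofs of Theorems \ref{thm:aapn11} and \ref{thm:aapn21}, generalizing every weighted sum from two or three indices to $m_k+1$ indices. First I would rewrite the Step 2 update of Algorithm \ref{alg:aapnm} as
\[
u_k - \hat u_k \;=\; u_k - \Big(1-\sum_{j=1}^{m_{k-1}}\alpha_k^j\Big)\tilde u_k - \sum_{j=1}^{m_{k-1}}\alpha_k^j \tilde u_{k-j} + (1-\beta_k) w_{k,m}^\alpha,
\]
so that, after constructing a governing equation for $u_k - \hat u_k$ and testing with $v = u_k - \hat u_k$, Lemma \ref{lemma10} converts the resulting $\|\nabla(u_k - \hat u_k)\|$-bound into the $\|\nabla(\tilde u_{k+1} - \hat u_k)\|$-bound on the left of \eqref{eqn:convm}.

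To produce that equation, I would form two weighted sums of the iteration equations. Adding $(1-\sum_j\alpha_k^j)\times$\eqref{wpaa} at level $k$ to $\alpha_k^j\times$\eqref{wpaa} at levels $k-j$ for $j=1,\dots,m_{k-1}$ produces an equation for $(1-\sum_j\alpha_k^j)\tilde u_k + \sum_j \alpha_k^j \tilde u_{k-j}$, accompanied by $m_{k-1}$ cross defects $\alpha_k^j b^*(u_{k-j-1},\tilde u_k-\tilde u_{k-j},v)$. An analogous weighted sum of \eqref{eqn:betak01} gives an equation for $w_{k,m}^\alpha$ carrying quadratic residuals $\alpha_k^j b^*(u_{k-j-1}-\hat u_{k-j-1}, u_{k-j-1}-\hat u_{k-j-1},v)$ together with cross differences $\alpha_k^j b^*(u_{k-1}-u_{k-j-1},\tilde u_{k-j}-u_{k-j-1},v)$. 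Subtracting the first combination from \eqref{wnaa} at iteration $k$ and adding $(1-\beta_k)$ times the second, then collapsing the three non-residual terms via the identity $\hat u_k = (1-\sum_j\alpha_k^j)\tilde u_k + \sum_j\alpha_k^j\tilde u_{k-j} - (1-\beta_k)w_{k,m}^\alpha$ (the same manoeuvre used in the $m=1,2$ cases), yields an equation for $u_k-\hat u_k$ whose right-hand side splits into: (i) a skew term vanishing at $v = u_k-\hat u_k$; (ii) the dominant term $-\beta_k b^*(w_{k,m}^\alpha,\tilde u_k,v)$; (iii) a term $-b^*(u_k-\hat u_k,\tilde u_k,v)$ absorbed into a $(1-\sigma)$ coercivity factor on the left; and (iv) an $O(m_{k-1})$ collection of cross and quadratic residual terms weighted by individual $\alpha_k^j$ or by $(1-\beta_k)$.

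Testing with $v = u_k - \hat u_k$ and applying \eqref{bstarbound}, \eqref{eqn:aam}, Lemma \ref{lemma:pic}, the Anderson-gain definition \eqref{eqn:thetam} and Lemma \ref{lemma10}, the dominant term contributes $\beta_k\sigma\theta_{k,m}MC_{k-1}^{-2}\|\nabla(\tilde u_k - \hat u_{k-1})\|^2$; the $\alpha_k^j$- and $(1-\beta_k)$-weighted quadratic residuals aggregate into exactly the summation $(1-\beta_k)C_A\nu^{-1}M\sum_{j=k-m}^{k-1}C_{j-1}^{-2}\|\nabla(\tilde u_j - \hat u_{j-1})\|^2$; and the remaining cross terms of the form $\sigma C_A M\|\nabla(u_k-u_{k-j-1})\|\|\nabla(u_{k-1}-u_{k-j-1})\|$ or $(1-\beta_k)C_AM\|\nabla(u_{k-1}-u_{k-j-1})\|\|\nabla(\tilde u_{k-j}-u_{k-j-1})\|$ are dispatched by telescoping triangle inequalities followed by $2ab\le a^2+b^2$ and by Lemma \ref{lemma10}, producing either $O(\|\nabla(u_j-u_{j-1})\|^2)$ contributions for $j\in\{k-m,\dots,k\}$ or genuine fourth-order remainders in $\{\tilde u_j - \hat u_{j-1}\}$.

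The final step is the bootstrap. A routine $m$-depth extension of Lemma \ref{lemma20m2}, obtained by replacing the $(m=2)$ triangle-inequality bound on $\|\nabla(\hat u_k - \hat u_{k-1})\|$ with its $m_{k-1}+1$-term analog drawn from the Step 2 update, gives
\[
\|\nabla(u_{k+1}-u_k)\| \;\lesssim\; \sigma^2 C_A^2\!\!\sum_{j=k-m+1}^{k}\!\!\|\nabla(u_j - u_{j-1})\|^2 + \|\nabla(\tilde u_{k+1}-\hat u_k)\|^2 + O(\|\nabla(\tilde u_{k+1}-\hat u_k)\|^4).
\]
Applying this recursively as in the $m=1,2$ proofs lifts every $\|\nabla(u_j-u_{j-1})\|^2$ remainder up to $\|\nabla(u_1 - u_0)\|^{2^{k-m+1}}$-type decay, absorbed under the ``higher order terms'' envelope of \eqref{eqn:convm}. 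I expect the main obstacle to be purely organizational: tracking the $O(m)$ cross defect terms generated by the expanded optimization and verifying that each one is absorbable into either the dominant $\theta_{k,m}$ bound, the explicit $\sum_{j=k-m}^{k-1}$ summation, or a strictly fourth-or-higher-order remainder. Once this term-counting is carried out, the $(1-\sigma)$ coercivity on the left together with the Picard estimate and the Anderson-gain identity deliver \eqref{eqn:convm} directly.
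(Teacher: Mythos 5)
The paper itself omits the proof of Theorem \ref{thm:aapnm}, stating only that it follows by "similar analyses" to the $m=1$ and $m=2$ cases, and your proposal is precisely that generalization: the same weighted-sum construction of the equations for $w_{k,m}^\alpha$ and $u_k-\hat u_k$, the same test function and term-collapsing via the Step 2 identity, the same use of Lemmas \ref{lemma:pic} and \ref{lemma10}, and the same recursive bootstrap through the depth-$m$ analog of Lemma \ref{lemma20m2}. This matches the paper's intended argument, and your accounting of the $O(m)$ cross-defect terms is consistent with how they are dispatched in the $m=2$ proof.
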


\begin{remark}
\label{remark}
Likewise before, AAPicard-Newton method converges quadratically and $\beta_{k+1}=1$ optimizes the convergence and provides global stability when $\sigma, \sigma C_A<1$.
 Reducing \eqref{eqn:convm} to  
\begin{multline}
\label{eqn:convmbeta}
\|\nabla (\tilde u_{k+1} - \hat u_k)\| \le  
 \frac{ \nu^{-1}M C_{k-1}^{-2}C_k}{1-\sigma}\sigma  \theta_{k,m} \|\nabla ( \tilde u_{k} - \hat u_{k-1})\|^2 
\\+ \text{higher order terms of } \{ \tilde u_j - \hat u_{j-1} \}_{j=2}^k \text{ and } u_1 - u_0, 
\end{multline}
we found that Algorithm \ref{alg:aapnm} is superior to the Picard-Newton method and AAPicard-Newton methods with smaller depth $l (<m)$ when $\alpha_{k+1}^m\neq0$ and $Re=\nu^{-1}$ large enough, because of $\theta_{k,m}<\theta_{k,l}\le 1$ or $\theta_{k,m}<1$. 
Otherwise, the higher-order terms may be dominant and decelerate the convergence. 

 Therefore, we recommend $\beta_{k+1}=1$ for all $k$ of Algorithm \ref{alg:aapnm} in the numeral tests with large Reynolds numbers.
\end{remark}

\section{Numerical tests for AAPicard-Newton}
In this section, we perform three numerical tests -- a 2D lid-driven cavity, a 3D lid-driven cavity, and channel flow past a cylinder -- to see how the Anderson-accelerated Picard method would affect the convergence of Newton's iteration.

 \subsection{2D lid-driven cavity}
 For this test, we consider the AAPicard-Newton method applied to the 2D lid-driven cavity benchmark problem. The domain is the unit square $\Omega = (0, 1)^2$, and we employ $(P_2, P_1^{disc})$ Scott-Vogelius elements on a barycenter refined uniform triangular meshes with a mesh size of $h = 1/64$, resulting in 98.8K velocity degrees of freedom (dof) and 73.7K pressure dof. We set  $f = 0$ and implement Dirichlet boundary conditions
that enforce no-slip velocity on the sides and bottom, while imposing a velocity of  $\langle 1, 0\rangle^T$ on the top (lid).  The initial guess is defined as $u_0=0$ within the interior of the domain, while satisfying the boundary condition.  

A plot of the velocity solutions (computed using the AAPicard-Newton $m=1$ and $\beta_{k+1}=1$)  for varying $Re \coloneqq \nu^{-1} = 5000,10000,15000$, and $20000$ is presented in Figure \ref{fig1}.  We observe that an additional small eddy appears and grows larger in the bottom right corner as $Re$ exceeds $10000$. Moreover, an additional small eddy emerges in the bottom left corner when $Re = 20000$.  These observations are consistent with findings in the literature \cite{ECC05}.

 \begin{figure}[h!]
\includegraphics[width = .24\textwidth, height=.24\textwidth,viewport=65 45 465 420, clip]{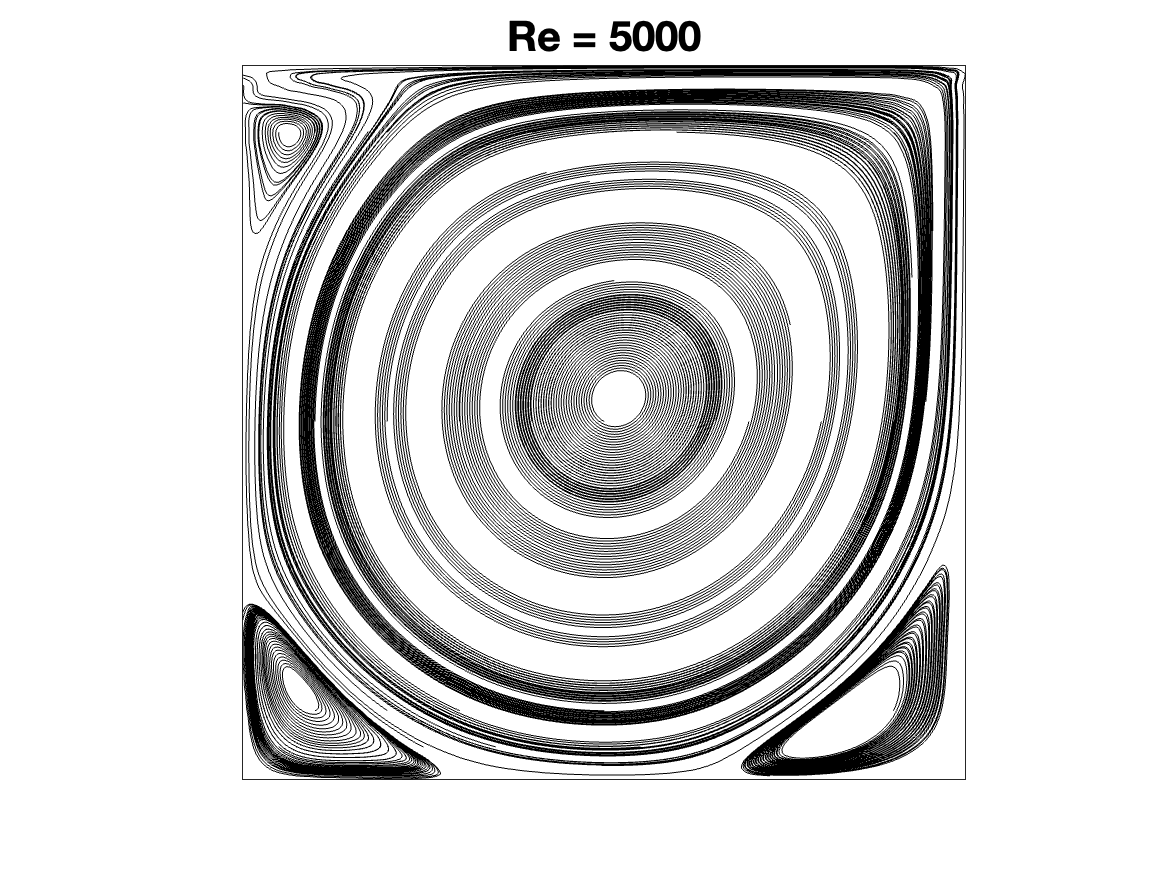}
 \includegraphics[width = .24\textwidth, height=.24\textwidth,viewport=65 45 465 420, clip]{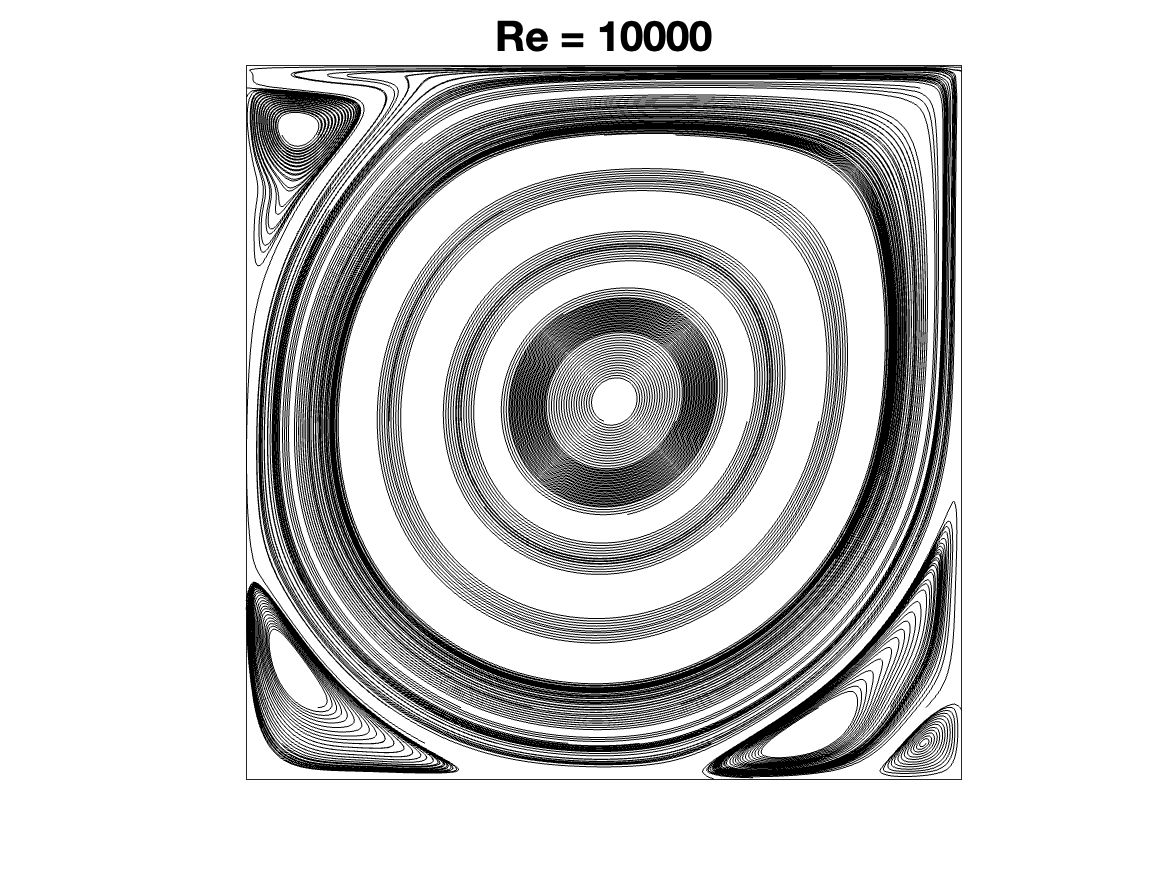}
\includegraphics[width = .24\textwidth, height=.24\textwidth,viewport=65 45 465 420, clip]{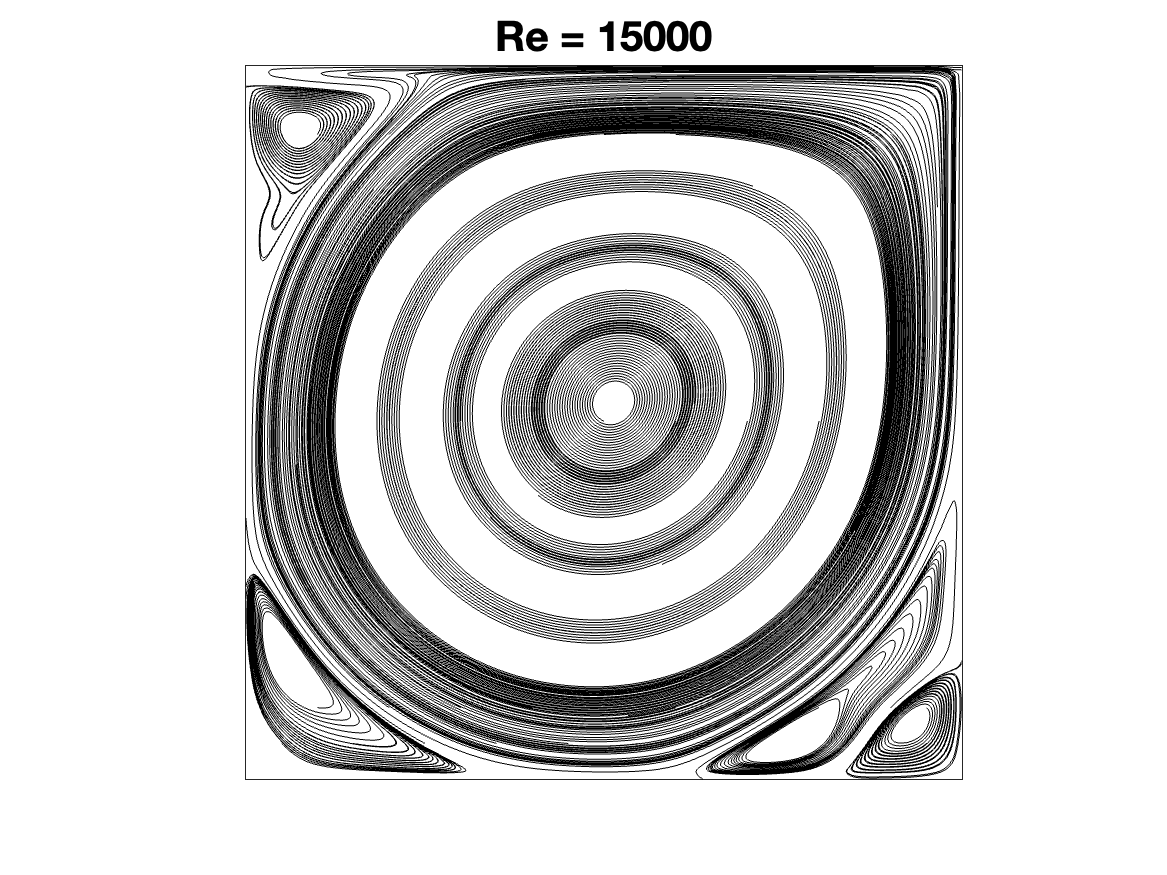}
\includegraphics[width = .24\textwidth, height=.24\textwidth,viewport=65 45 465 420, clip]{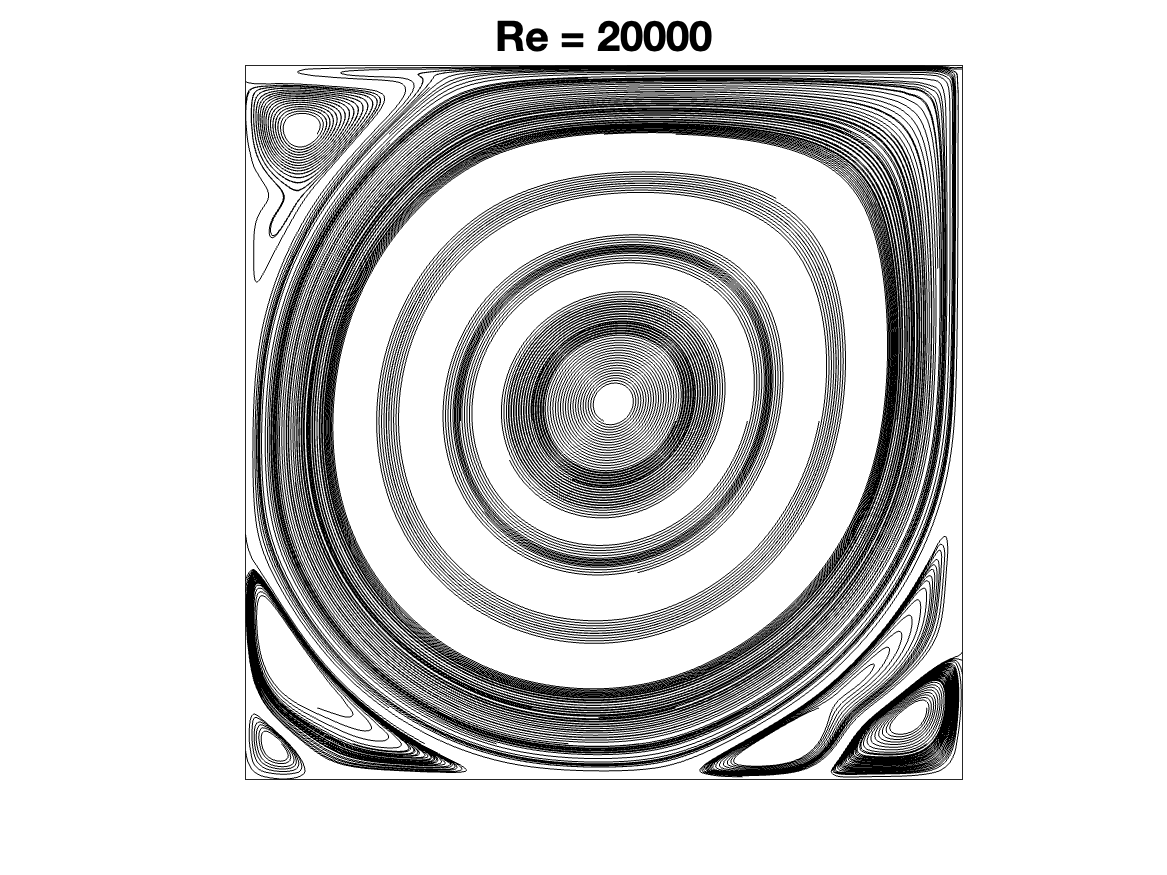}
\caption{Shown above are streamlines of velocity solutions found by the AAPicard-Newton method for the 2D lid-driven cavity problems with varying $Re$.    \label{fig1}}
 \end{figure}

The convergence plot of residuals $\|\nabla (\tilde u_{k+1} -\hat u_k)\|$ solved by the AAPicard-Newton method is shown in Figure \ref{fig2}. In this plot, $\beta_{k+1} \equiv 1$ is fixed, and $m=0$ refers to the usual Picard-Newton method Algorithm \ref{alg:pn}. We find that the AAPicard-Newton method converges quadratically; slows the convergence for medium $Re =5000$; and significantly enhances convergence for large Reynolds numbers, like $Re =10000,15000$, and $20000$.

The median values of $\theta_{m,k}$  are summarized in Table \ref{table:theta}, revealing that a larger Anderson depth $m$ results in a smaller Anderson gain $\theta$. Consequently,  for sufficiently high Reynolds number $Re$, the quadratic term dominates in \eqref{eqn:convmbeta}, and a larger depth $m$ accelerates convergence. 
Table \ref{table:comptime} presents the computational time solved by the Anderson step (Step 2), Picard step (Step 1), and Newton step (Step 3) in Algorithm \ref{alg:aapn1}, \ref{alg:aapn2}, and \ref{alg:aapnm}, under fixed $\beta =1$ and various Reynolds number $Re$ and Anderson depth $m$. We observe that the Anderson step solve time increases for large depth $m$, but it is still much cheaper than both the Picard solve and the Newton solve. Thus, we recommend using a large depth $m$ for high Reynolds number problems as the AAPicard-Newton method can dramatically cut the required number of iterations.

We also provide convergence plots for a fixed $Re = 10000$ with varied $\beta_{k+1}$ to examine how the relaxation parameter $\beta_{k+1}$ affects the performance of the AAPicard-Newton method, as shown in Figure \ref{fig:beta}. It is evident that $\beta_{k+1} \equiv 1$ optimizes convergence performance. All observations are in agreement with Theorem \ref{thm:aapnm} and  Remark \ref{remark}.

\begin{table}[h!]
\centering
\begin{tabular}{c|| ccccc }
&\multicolumn{5}{c}{median value of $\theta_{k,m}$}   \\
 $Re$  & $m=1$ & $m=2$ & $m=5$ & $m=10$ & $m=20$    \\  \hline
5000 &        0.9638 & 0.97668  & 0.5266  &0.2469 &0.1581       \\
10000 &  0.7823 & 0.7972 & 0.4961 & 0.1545 & 0.2492(F)       \\
15000 &   0.7663 &  0.7252 & 0.4514 & 0.3145 & 0.1359           \\
20000 &    0.6800(F) & 0.4447(F) & 0.2306& 0.2191 & 0.3856        
\end{tabular}
\caption{\label{table:theta}
Shown above are the median value of $\theta_{k,m}$ from the AAPicard-Newton method with fixed $\beta_{k+1} =1$ in Algorithm \ref{alg:aapn1}, \ref{alg:aapn2} and \ref{alg:aapnm}, various $m$ and $Re$. `F' means no convergence after 100 iterations.}
\end{table}

\begin{table}[h!]
\centering
\begin{tabular}{c|| ccccc || c|c}
& \multicolumn{7}{c}{Linear Solve Time (in sec)}\\ \hline
& \multicolumn{5}{c}{ Step 2}  &Step 1 & Step 3 \\
$Re$ &$m=1$ & $m=2$ & $m=5$ & $m=10$ & $m=20$ &  Picard solve & Newton solve \\ \hline\hline
$5000$ & 0.010& 0.010 & 0.013 &0.018& 0.034 & 2.2201 & 1.7974 \\
$10000$ &0.011  &0.011 & 0.018 & 0.020 & 0.055(F) & 2.0886 & 1.9405\\
$15000$ & 0.009 & 0.011 & 0.017 & 0.024 & 0.036 & 2.3679 & 1.7817\\
$20000$ & 0.024 (F) & 0.023(F) & 0.023  & 0.022 & 0.037 & 2.0158 & 1.7629\\
\end{tabular}
\caption{\label{table:comptime}
Shown above are the linear solve times in seconds (Step 2 Anderson optimization solve, Step 1 Picard solve, and Step 3 Newton solve) in Algorithm \ref{alg:aapn1}, \ref{alg:aapn2} and \ref{alg:aapnm}, with various $m$ and $Re$. `F' means no convergence after 100 iterations.}
\end{table}

 \begin{figure}[h!]
 \centering
 \includegraphics[width = .44\textwidth, height=.44\textwidth,viewport=2 2 550 420, clip]{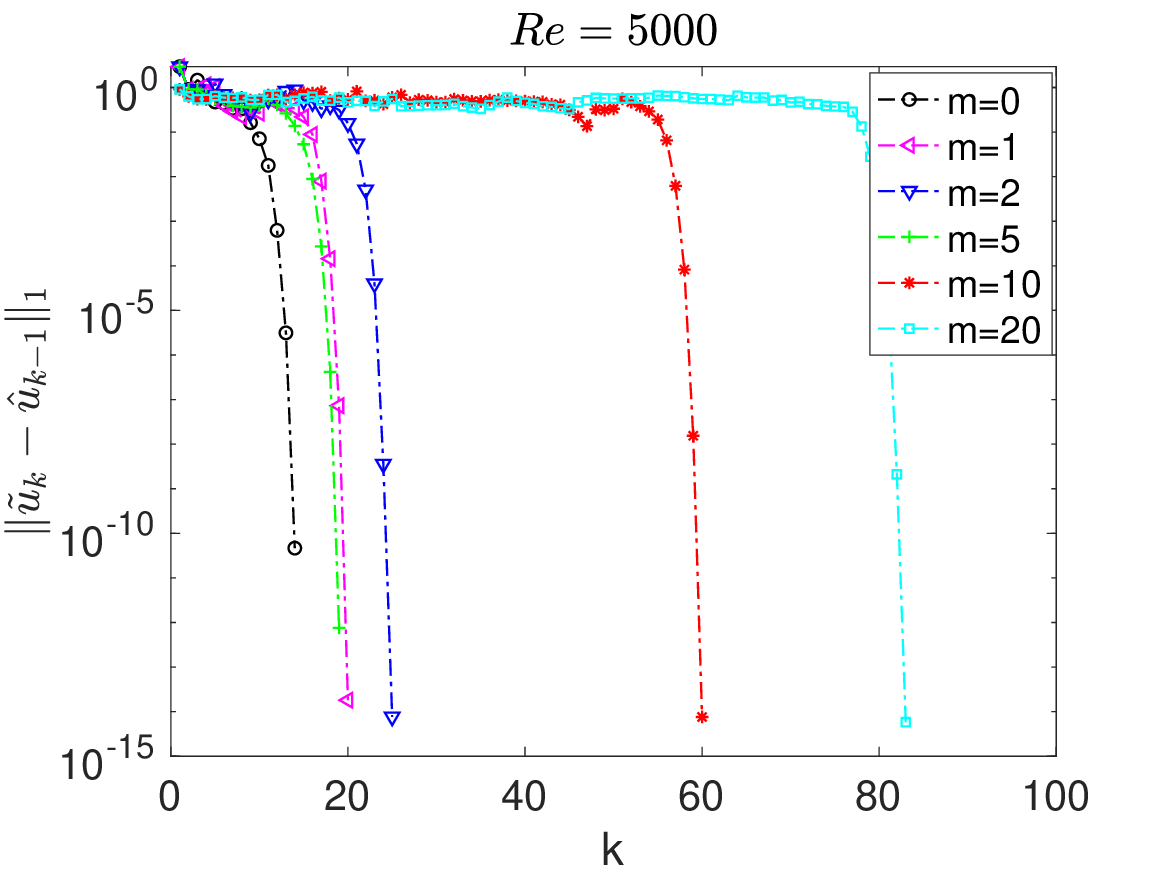}
 \includegraphics[width = .44\textwidth, height=.44\textwidth,viewport=2 2 550 420, clip]{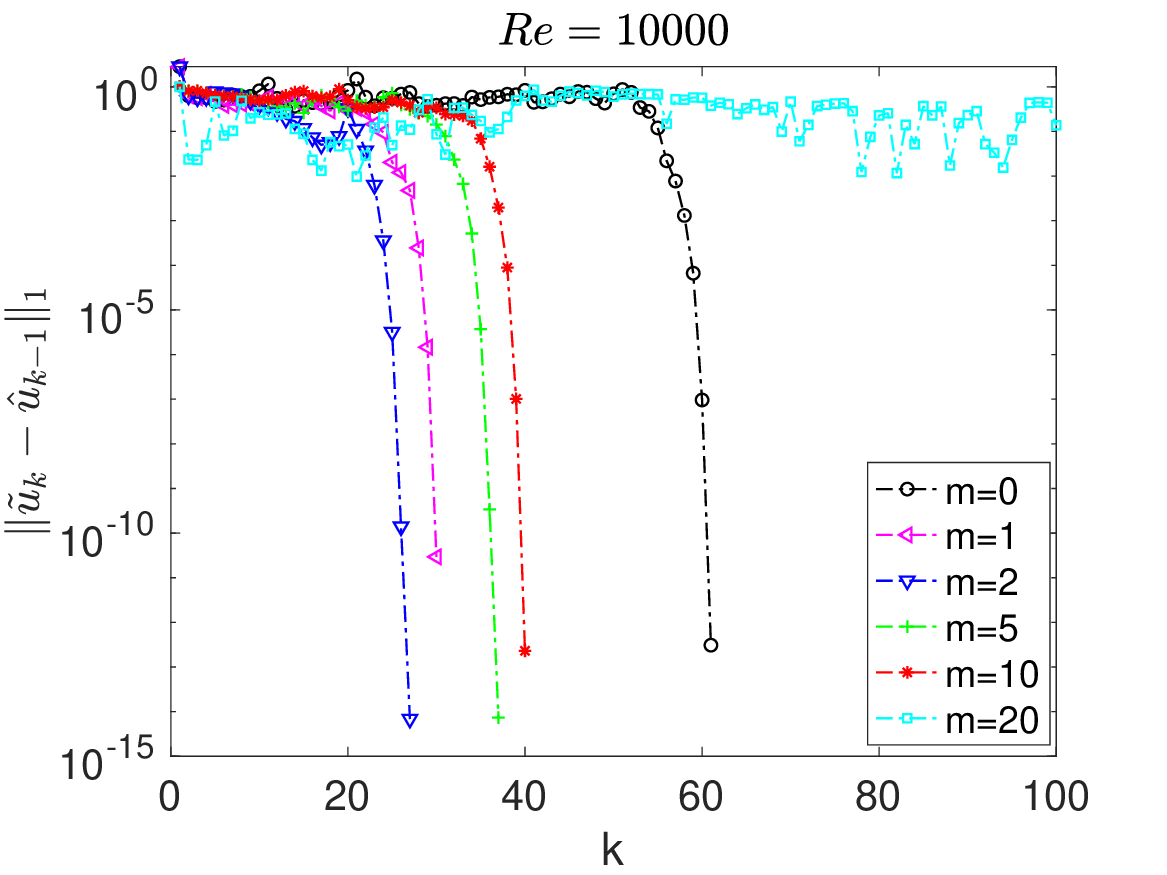}\\
\includegraphics[width = .44\textwidth, height=.44\textwidth,viewport=2 2 550 420, clip]{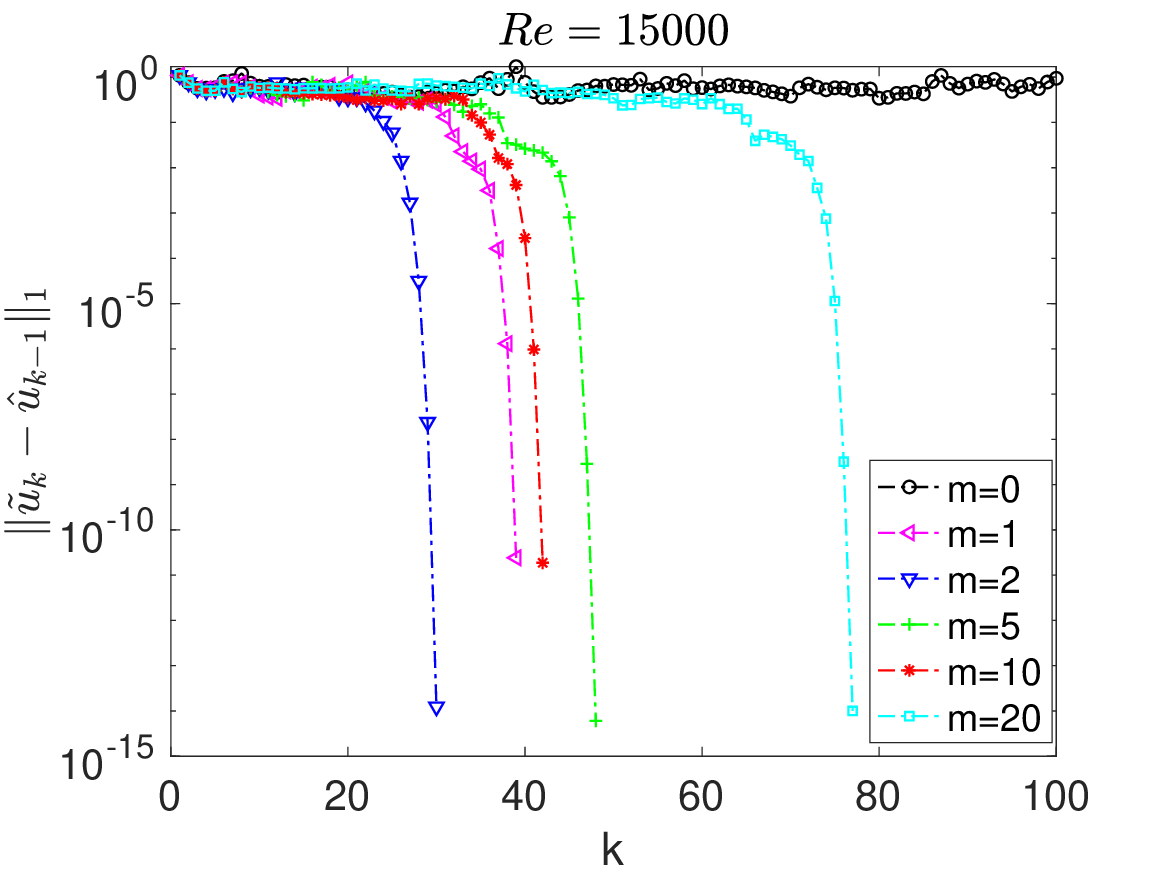}
\includegraphics[width = .44\textwidth, height=.44\textwidth,viewport=2 2 550 420, clip]{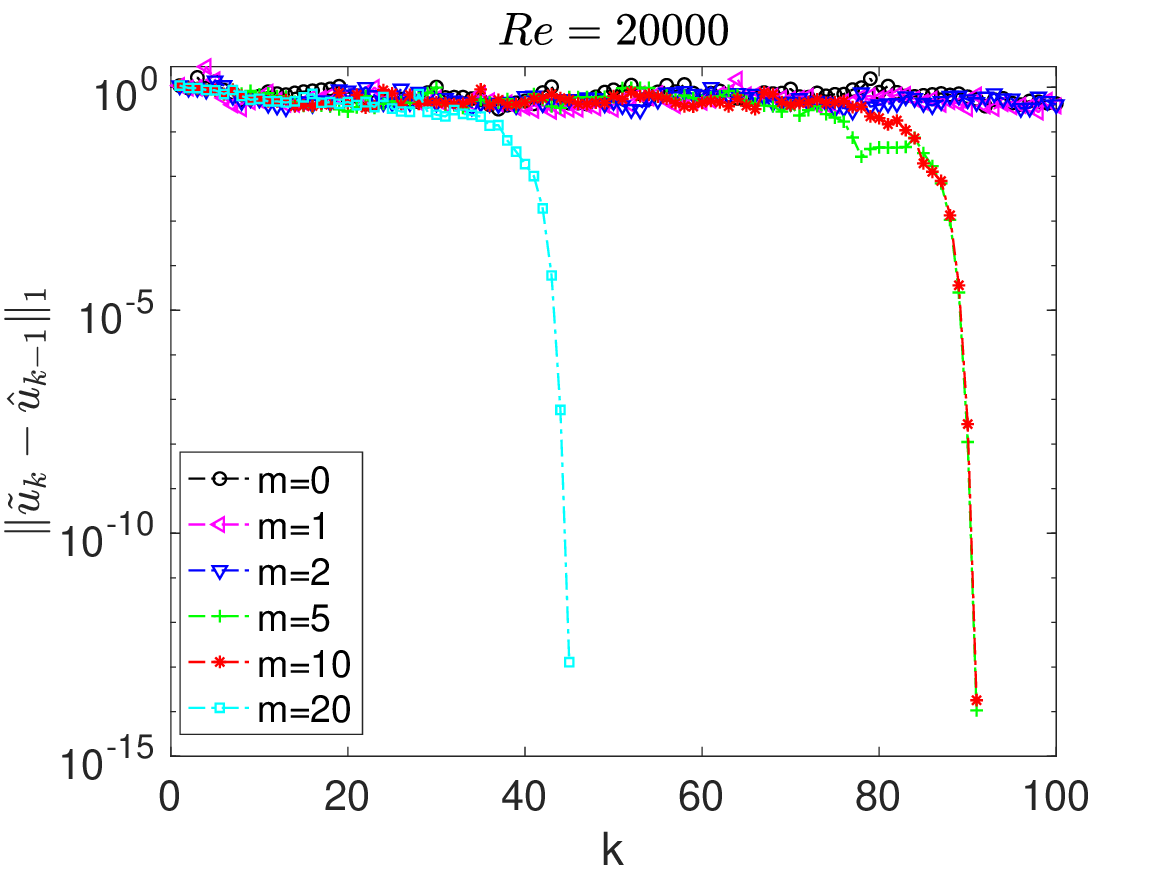}
\caption{\label{fig2} 
Shown above are the convergence plots of residual $\|\nabla (\tilde u_{k+1} -\hat u_k)\|$ found by AAPicard-Newton method with $\beta_{k+1}=1$, various Anderson acceleration depth $m$ and $Re$.}
 \end{figure}

\begin{figure}[h!]
\centering
\includegraphics[width = .44\textwidth, height=.44\textwidth,viewport=2 2 550 420, clip]{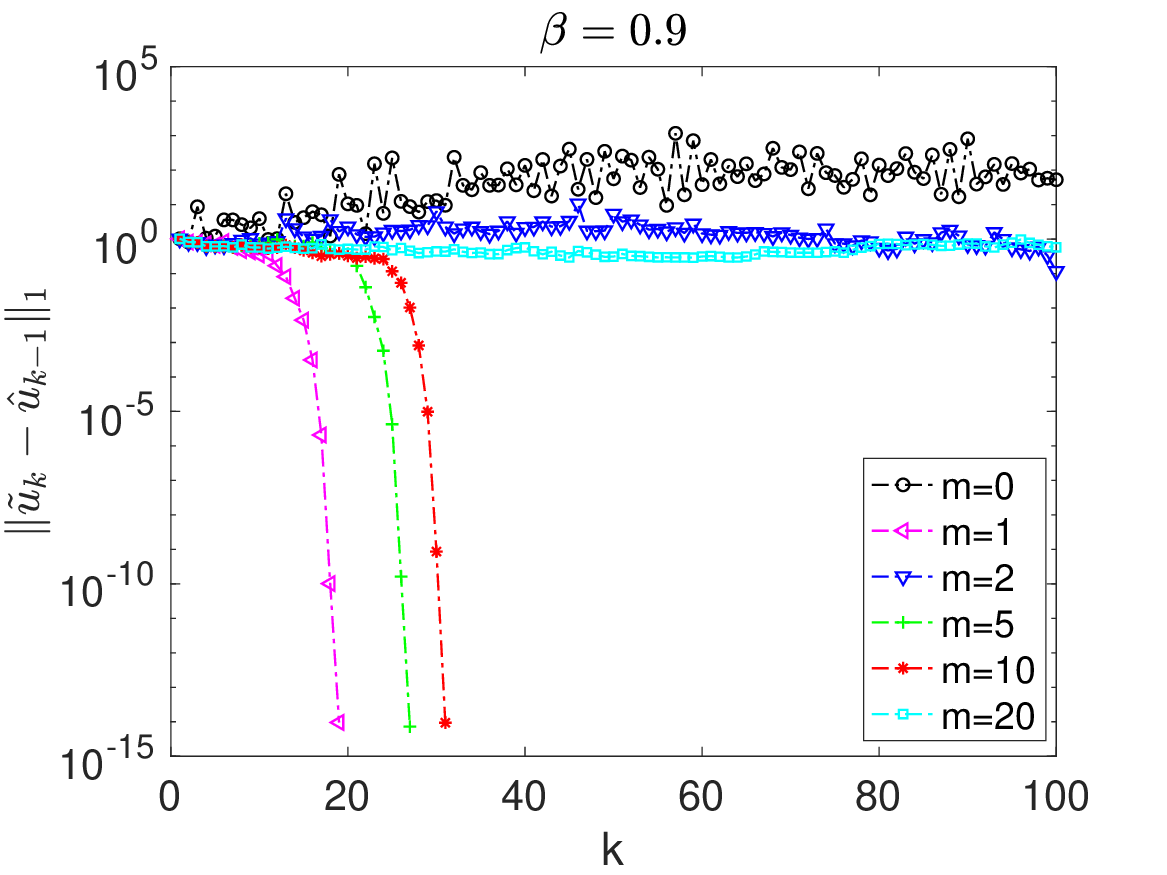}
 \includegraphics[width = .44\textwidth, height=.44\textwidth,viewport=2 2 550 420, clip]{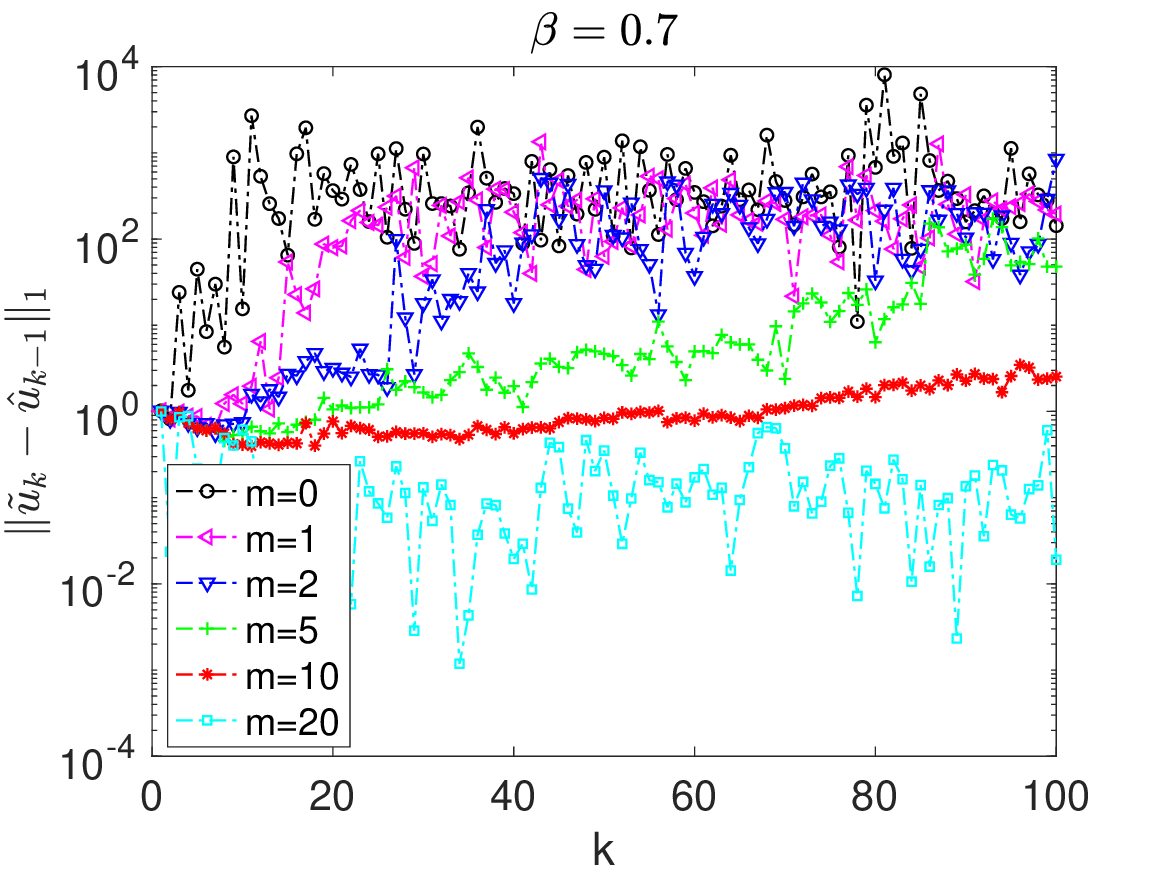}\\
\includegraphics[width = .44\textwidth, height=.44\textwidth,viewport=2 2 550 420, clip]{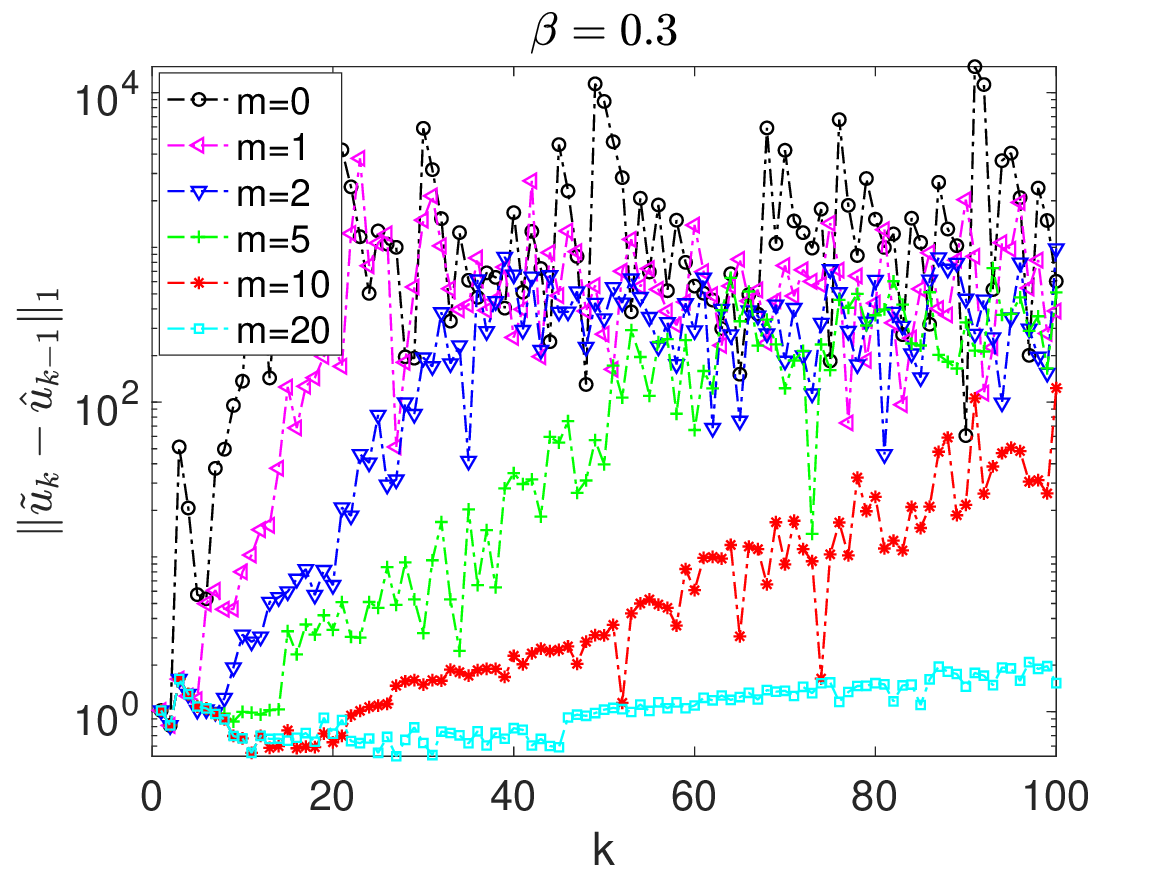}
\includegraphics[width = .44\textwidth, height=.44\textwidth,viewport=2 2 550 420, clip]{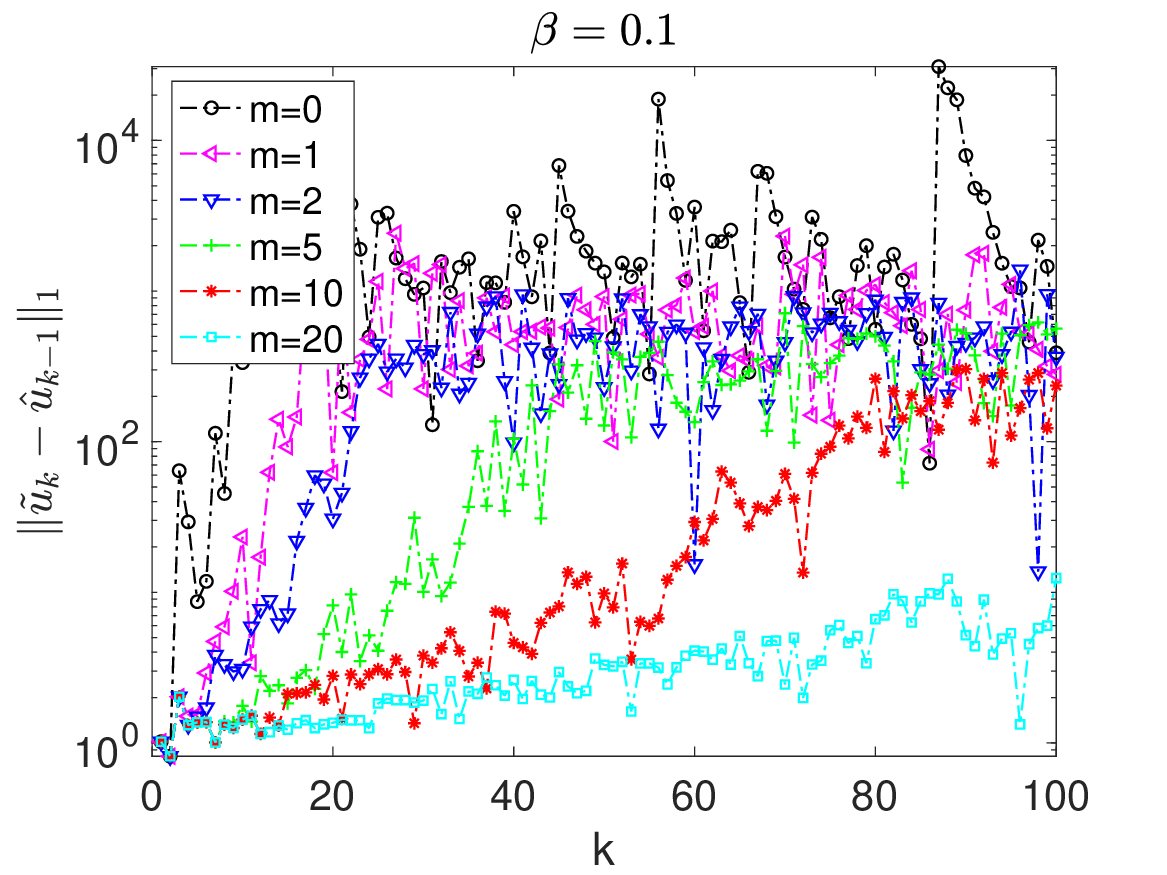}
\caption{
\label{fig:beta}
Shown above are the convergence plots of residual $\|\nabla (\tilde u_{k+1} -\hat u_k)\|$ found by AAPicard-Newton method with $Re =10000$, various Anderson acceleration depth $m$, and relaxation parameter $\beta_{k+1}$.}
\end{figure}

\subsection{3D lid-driven cavity}
Next, we examine the AAPicard-Newton method as applied to the 3D lid-driven cavity benchmark test. The domain is the unit cube $\Omega = (0,1)^3$.  We utilize $(P_3,P_2^{disc})$ Scott-Vogelius elements on a barycenter refined uniform tetrahedral mesh, which comprises a total of 796,722 dof. We set the forcing term $f=0$ and apply Dirichlet boundary conditions that impose the no-slip velocity on the sides and bottom, while applying a velocity of  $\langle 1,0,0 \rangle^T$ on the top lid. The initial guess is set as $u_0=0$ in the interior of the domain, satisfying the boundary conditions. At each iteration,  the solver employed is the incremental Picard-Yosida method as described in \cite{RVX19}. The velocity solutions obtained using the AAPicard-Newton method, with a depth $m=1$, are presented for varying $Re = \nu^{-1} = 100, 400$, and $1000$ and are illustrated in Figure \ref{fig3}. These results are consistent with those reported in the literature \cite{WB02}.

\begin{figure}[h!]
\includegraphics[width = .24\textwidth, height=.22\textwidth,viewport=5  0 550 390, clip]{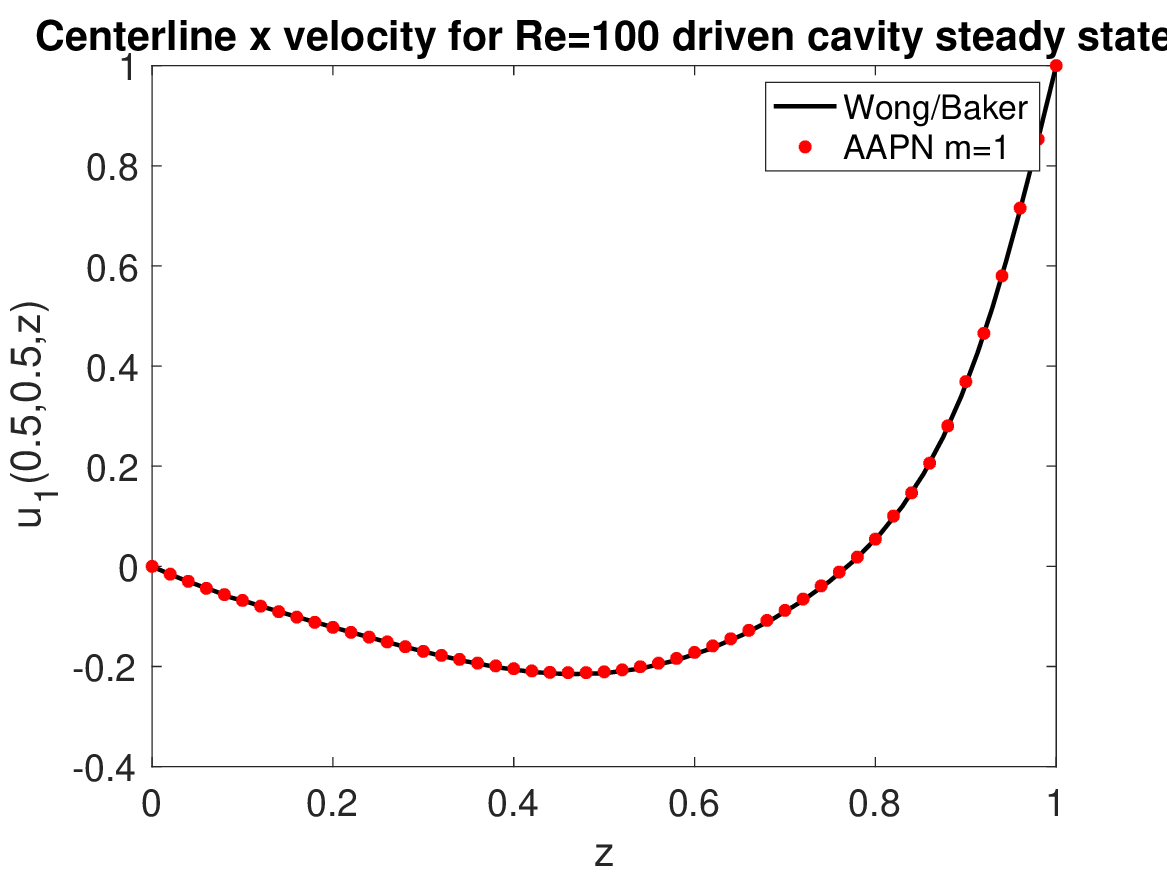}
\includegraphics[width = .74\textwidth, height=.24\textwidth,viewport=45 130 510 285, clip]{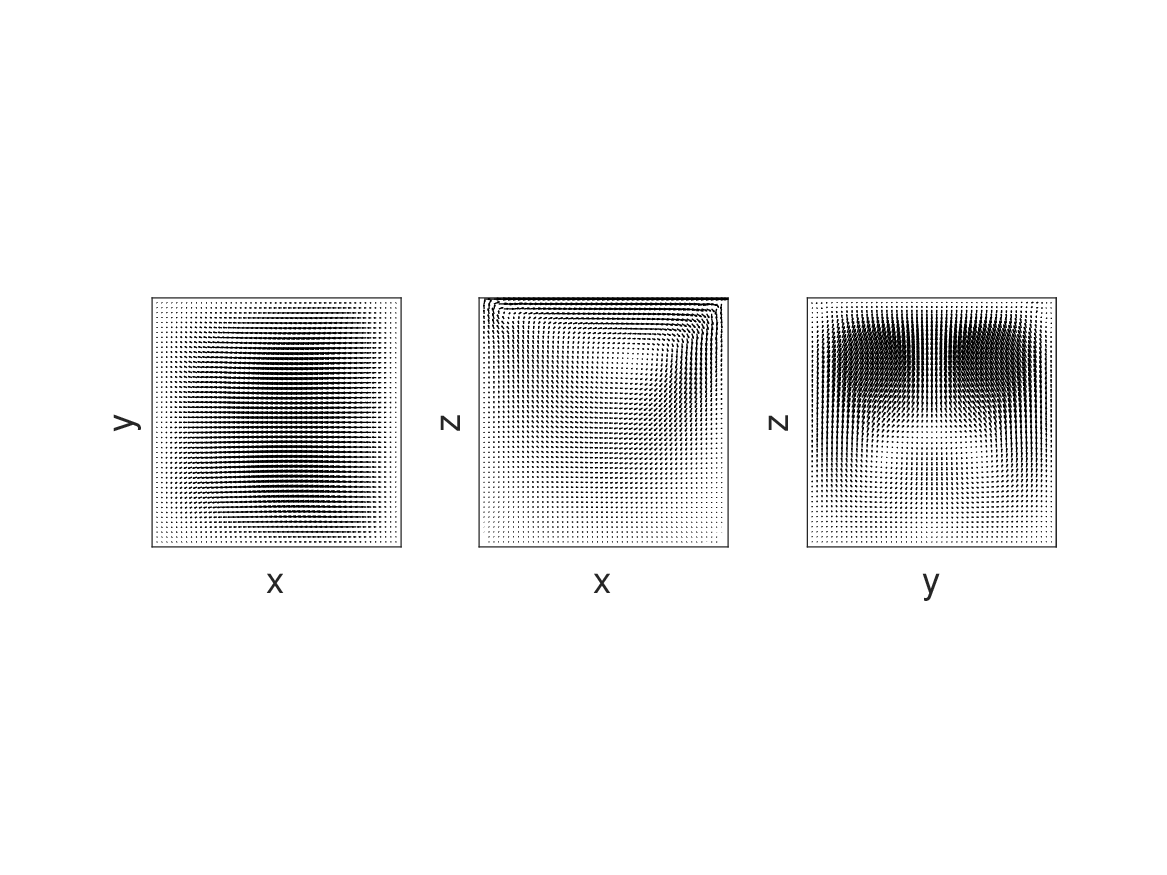}
\\
\includegraphics[width = .24\textwidth, height=.22\textwidth,viewport=5  0 550 390, clip]{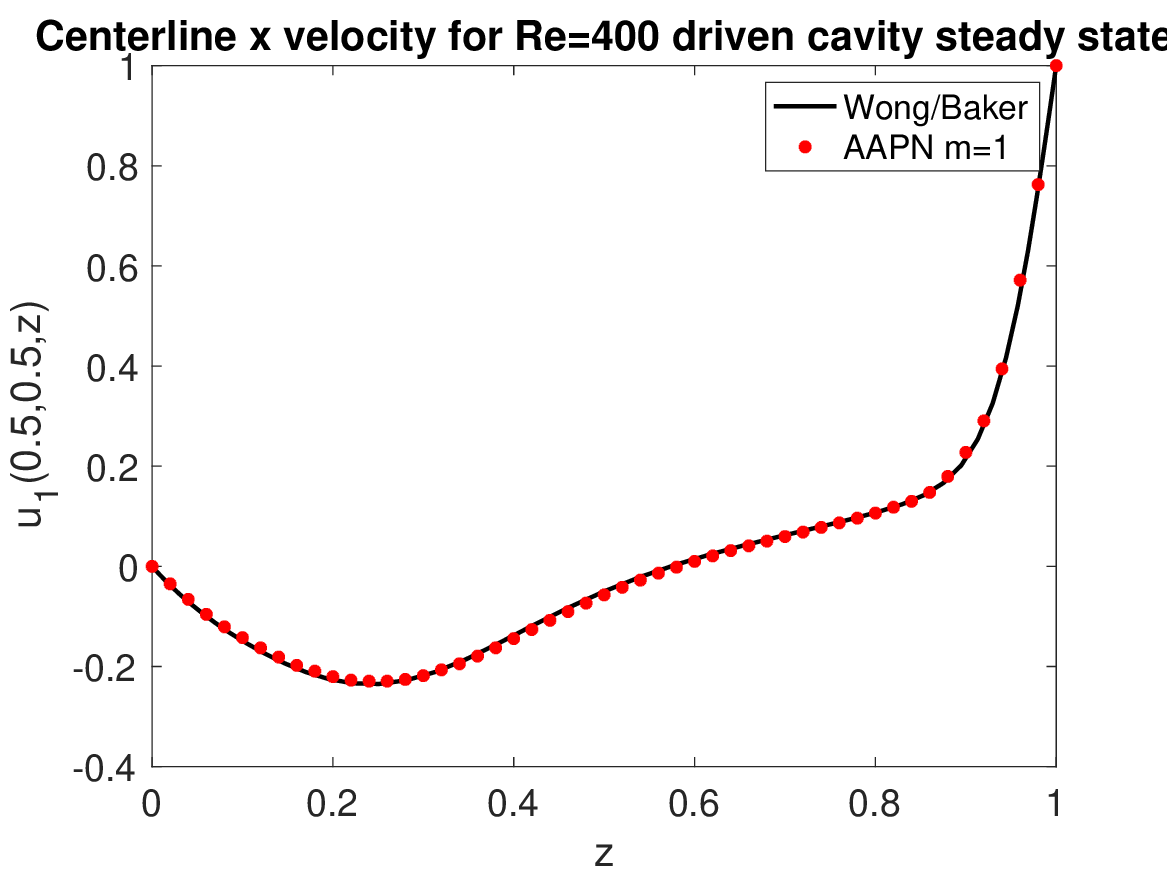}
\includegraphics[width = .74\textwidth, height=.24\textwidth,viewport=45 130 510 285, clip]{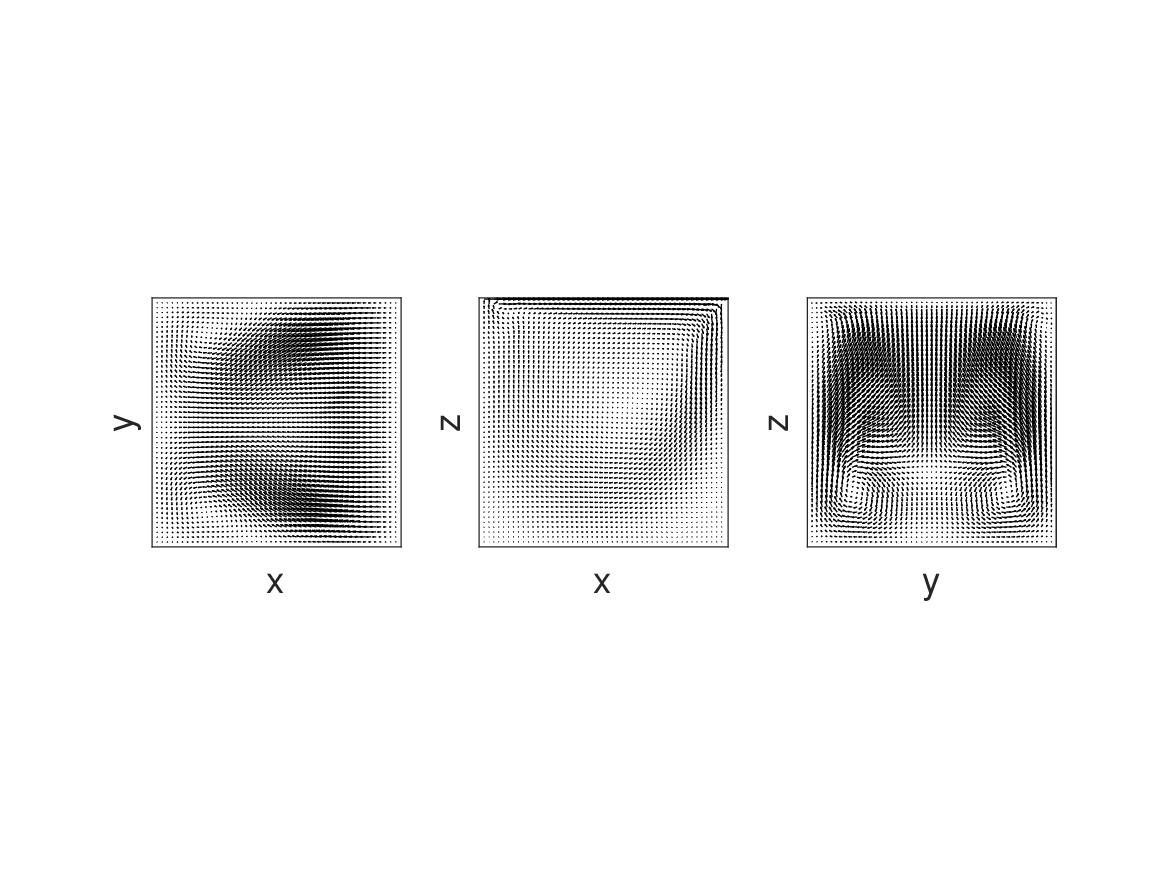}
\\
\includegraphics[width = .24\textwidth, height=.22\textwidth,viewport=5  0 550 390, clip]{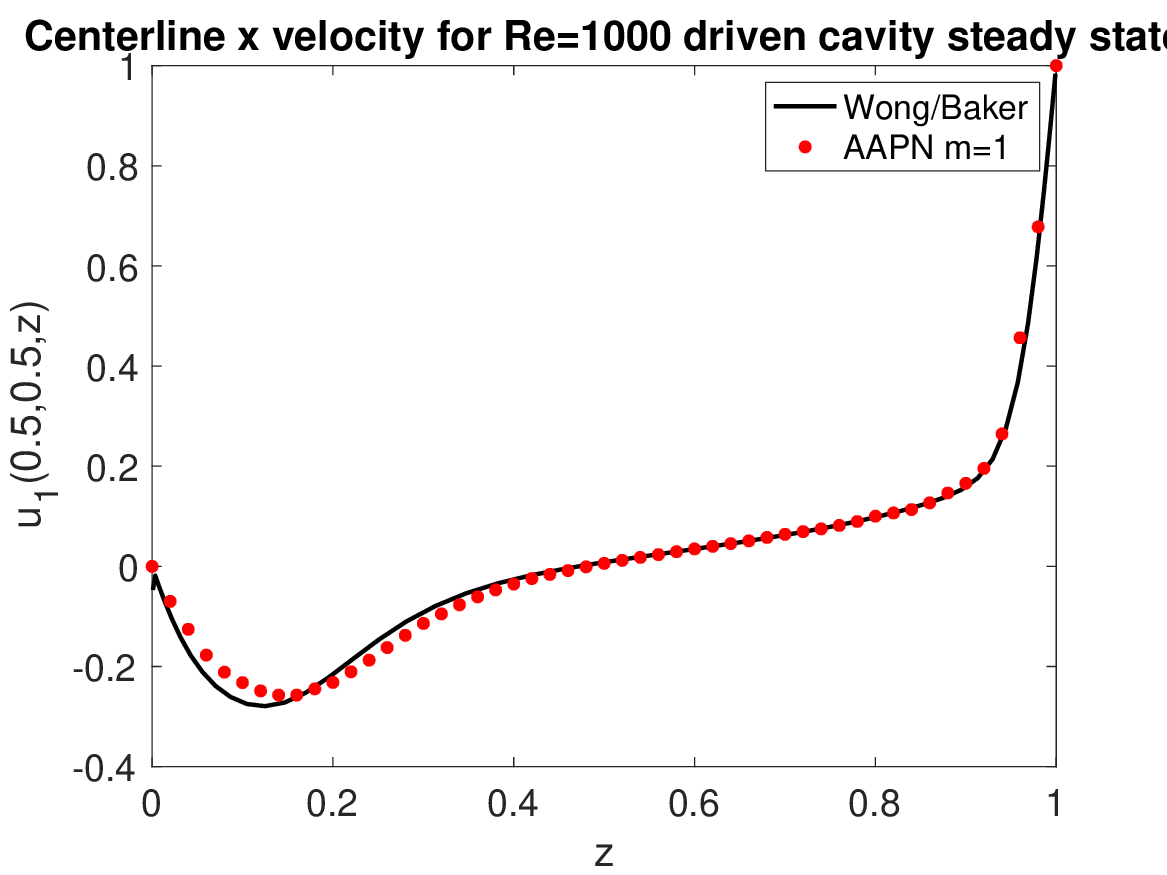}
\includegraphics[width = .74\textwidth, height=.24\textwidth,viewport=45 130 510 285, clip]{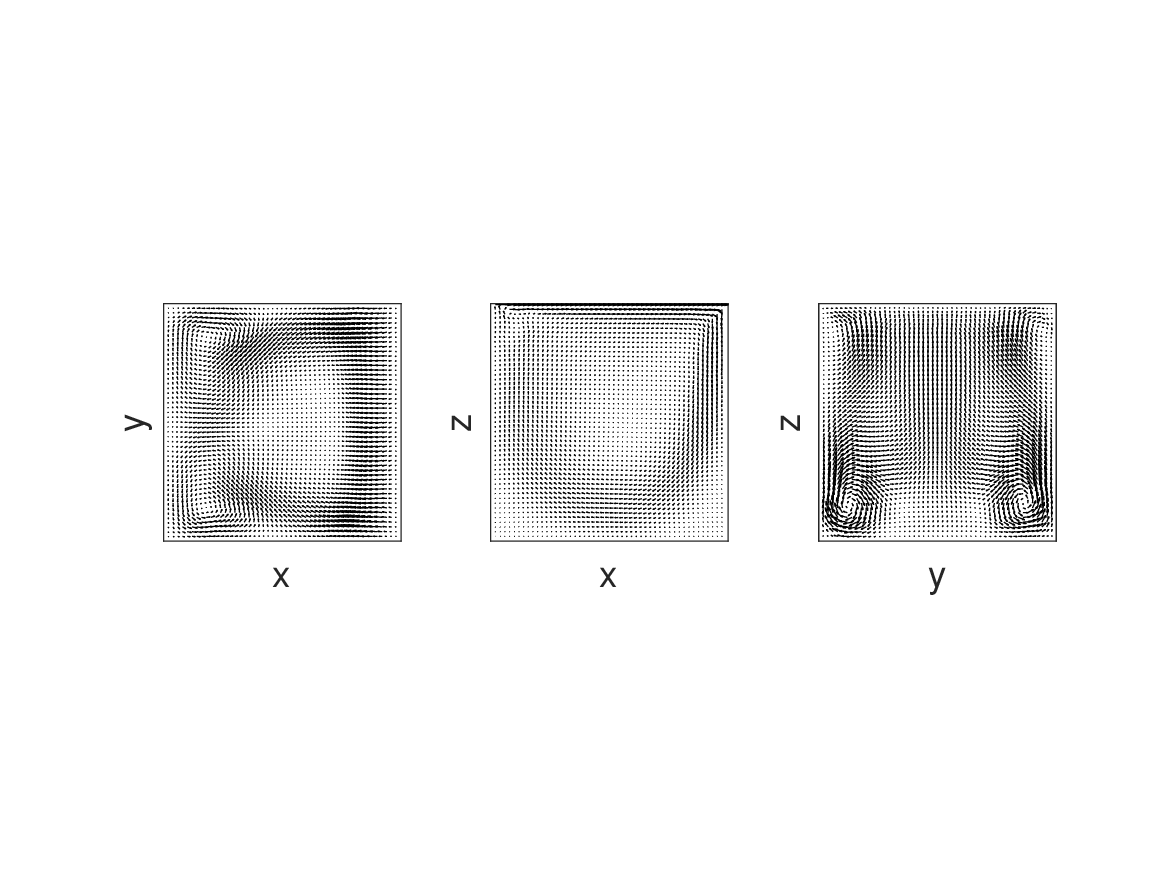}
\caption{\label{fig3}
Shown above are the centerline $x$-velocity and midsliceplanes of the solution for the 3D lid-driven cavity simulations for $Re = 100$ (top), $400$ (middle), and $1000$ (bottom), solved by the AAPicard-Newton method with depth $m=1$.
}
\end{figure}

Next, we assess the number of iterations required by the AAPicard-Newton method with varying depth $m$  for the residual to fall below $10^{-8}$ in the $H^1$ norm.  The results are presented in Table \ref{table1}. We observe that the AAPicard-Newton method significantly reduces the number of iterations needed for high $Re$ compared to the Picard-Newton method. Additionally, a larger depth proves more effective for high Reynolds numbers. 
Table \ref{table:comptime3D} presents the computational time solved by the Anderson step (Step 2), Picard step (Step 1), and Newton step (Step 3) in Algorithm \ref{alg:aapn1}, \ref{alg:aapn2}, and \ref{alg:aapnm}, under fixed $\beta =1$ and various Reynolds number $Re$ and Anderson depth $m$. We again observe that the Anderson step solve time increases for large depth $m$, but it is still significantly cheaper than both the Picard solve and the Newton solve. Thus, we recommend using a large depth $m$ for high Reynolds number problems as the AAPicard-Newton method can dramatically cut the required number of iterations.

A summary of convergence plots for large $Re$ is displayed in Figure \ref{fig4}. It is evident that the AAPicard-Newton method exhibits quadratic convergence and markedly improves convergence for large $Re$.  Furthermore, setting $\beta_{k+1} \equiv 1$ optimizes the performance of Anderson acceleration. All findings align well with Theorem \ref{thm:aapnm}.

\begin{table}[h!]
\centering
\begin{tabular}{c||c|c|c|| ccccc}
\multirow{2}*{$Re$/ Method} & \multirow{2}*{Pic.} & \multirow{2}*{Newt.} & \multirow{2}*{P-N} & \multicolumn{5}{|c}{AAP-N}\\
 &&&& $m=1$ & $m=2$ & $m=5$ & $m=10$ & $m=20$  \\ \hline
 100 &  21&5 & 4 & 4 &- &-&-&-\\
 400 &F  &8 & 6 & 6 &6& -& -&- \\
 1000 & F &B & 16 & 10 & 10& 10 & - &- \\
 1500 &  F & B &  26 & 12 & 12& 13&13 &-\\
 2000 & F &B & F &  16 & 13 &14&14&-\\
 2500 & F&B & F & F & 27 & 55 &44&51 \\
 3000 &  F&B &F & F & F & 76& 81 &39
\end{tabular}
\caption{\label{table1} 
Shown above are the convergence results (number of iterations, `F' if no convergence after 100 iterations, `B' if $H^1$ residual grows above $10^3$) for the Picard, Newton, Picard-Newton, and AAPicard-Newton methods for varying $Re$.}
\end{table}

\begin{table}[h!]
\centering
\begin{tabular}{c|| ccccc || c|c}
& \multicolumn{7}{c}{Linear Solve Time (in sec)}\\ \hline
& \multicolumn{5}{c}{ Step 2}  &Step 1 & Step 3 \\
$Re$ &$m=1$ & $m=2$ & $m=5$ & $m=10$ & $m=20$ &  Picard solve & Newton solve \\ \hline\hline
$100$  & 0.013 & - & - & - & - & 194.6 & 366.6 \\
$400$  & 0.012 & 0.013 & - & - & - & 189.4 & 363.9 \\
$1000$ & 0.010 & 0.012 & 0.021 & - & - & 185.0 & 349.6 \\
$1500$ & 0.009 & 0.012 & 0.021 & 0.035 & - & 181.6 & 346.1 \\
$2000$ & 0.010 & 0.012 & 0.021 & 0.034 & - & 181.2 & 354.9 \\
$2500$ & 0.010(F) & 0.012 & 0.021 & 0.033 & 0.064 & 181.7 & 356.1 \\
$3000$ & 0.009(F) & 0.013(F) & 0.022 & 0.034 & 0.063 & 182.4 & 346.6 \\
\end{tabular}
\caption{\label{table:comptime3D}
Shown above are the linear solve time in second (Step 2 Anderson optimization solve, Step 1 Picard solve, and Step 3 Newton solve) in Algorithm \ref{alg:aapn1}, \ref{alg:aapn2} and \ref{alg:aapnm}, with various $m$ and $Re$. `F' means no convergence after 100 iterations.}
\end{table}
 
\begin{figure}[h!]
\centering
\includegraphics[width = .44\textwidth, height=.42\textwidth,viewport=5  0 550 420, clip]{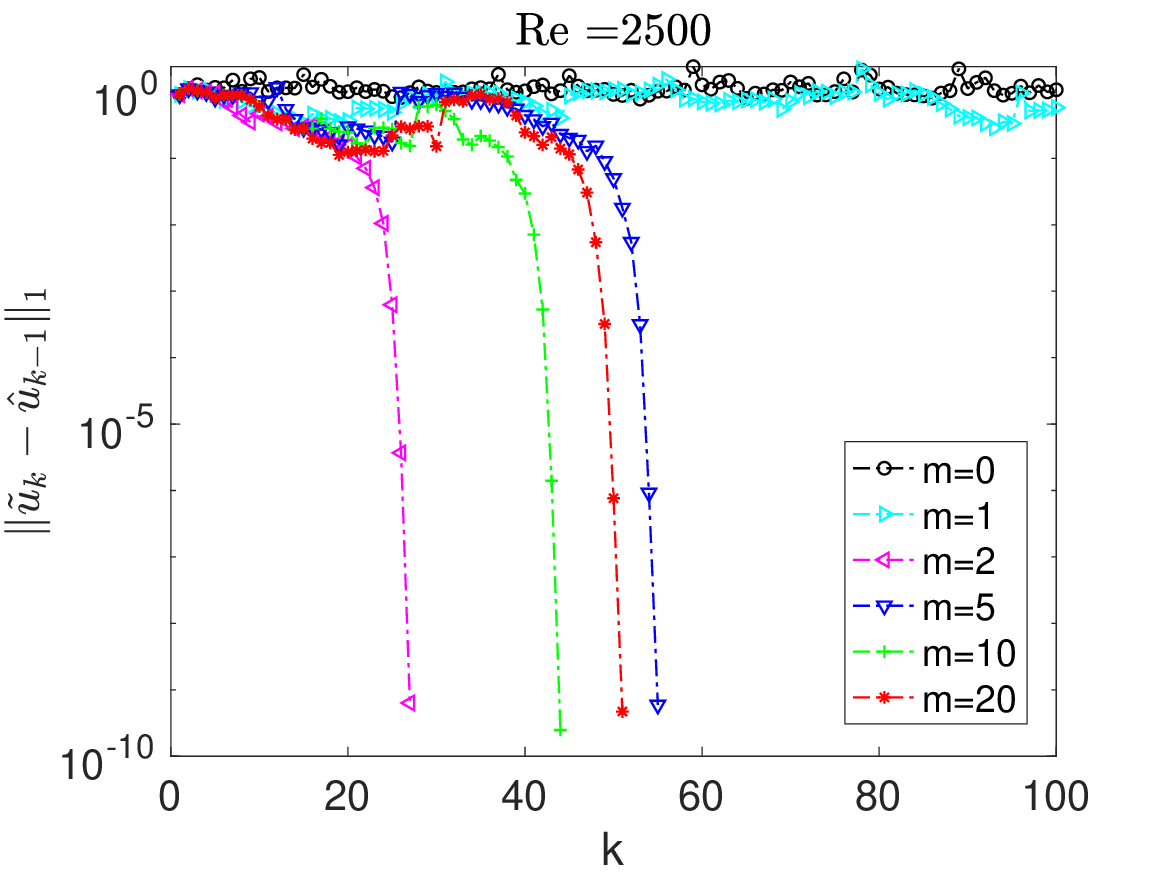}
\includegraphics[width = .44\textwidth, height=.42\textwidth,viewport=5  0 550 420, clip]{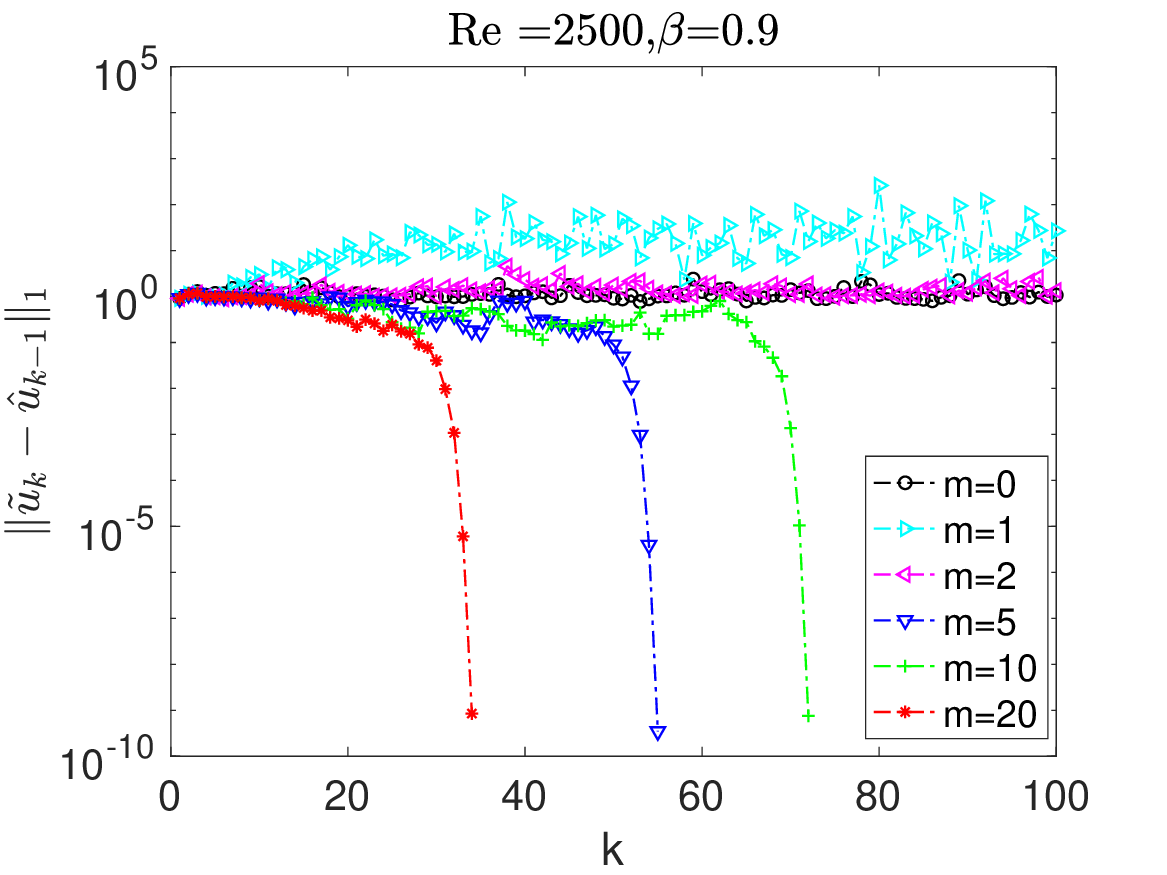}\\
\includegraphics[width = .44\textwidth, height=.42\textwidth,viewport=5  0 550 420, clip]{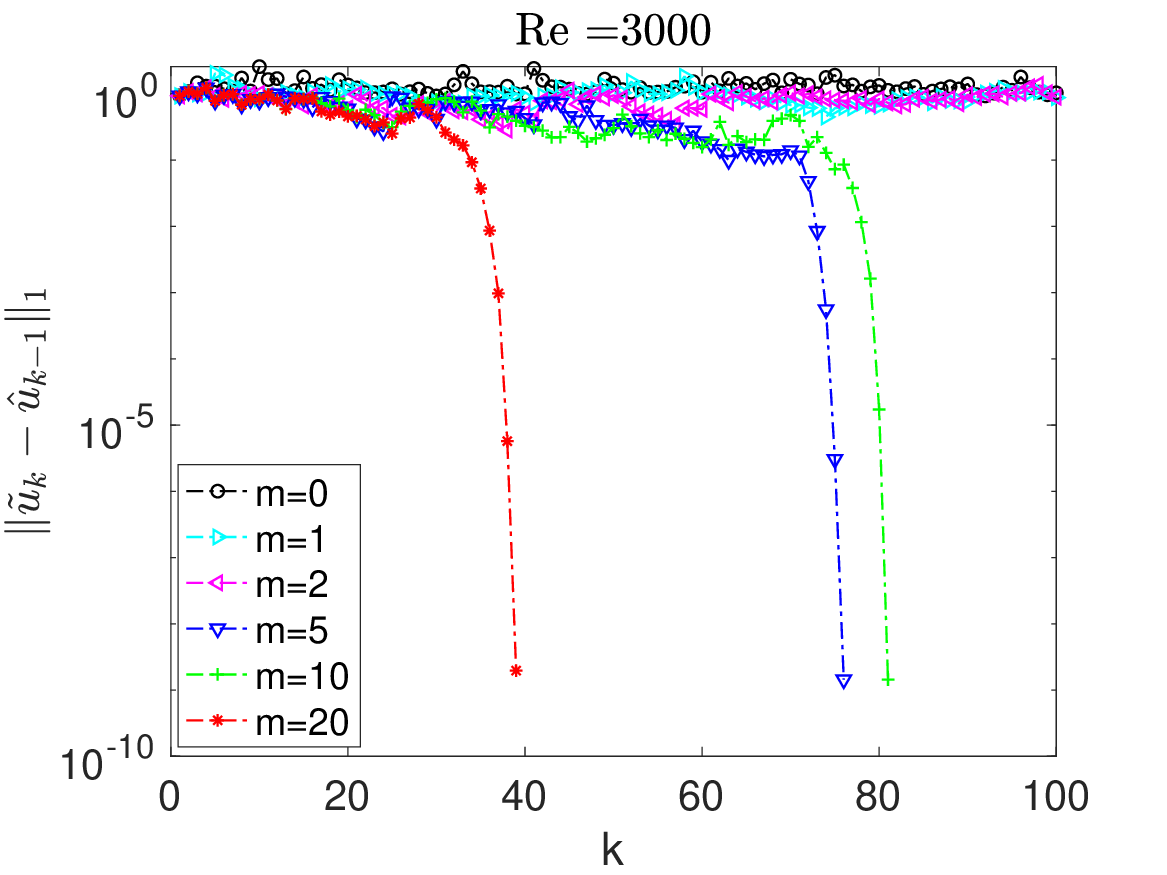}
\includegraphics[width = .44\textwidth, height=.42\textwidth,viewport=5  0 550 420, clip]{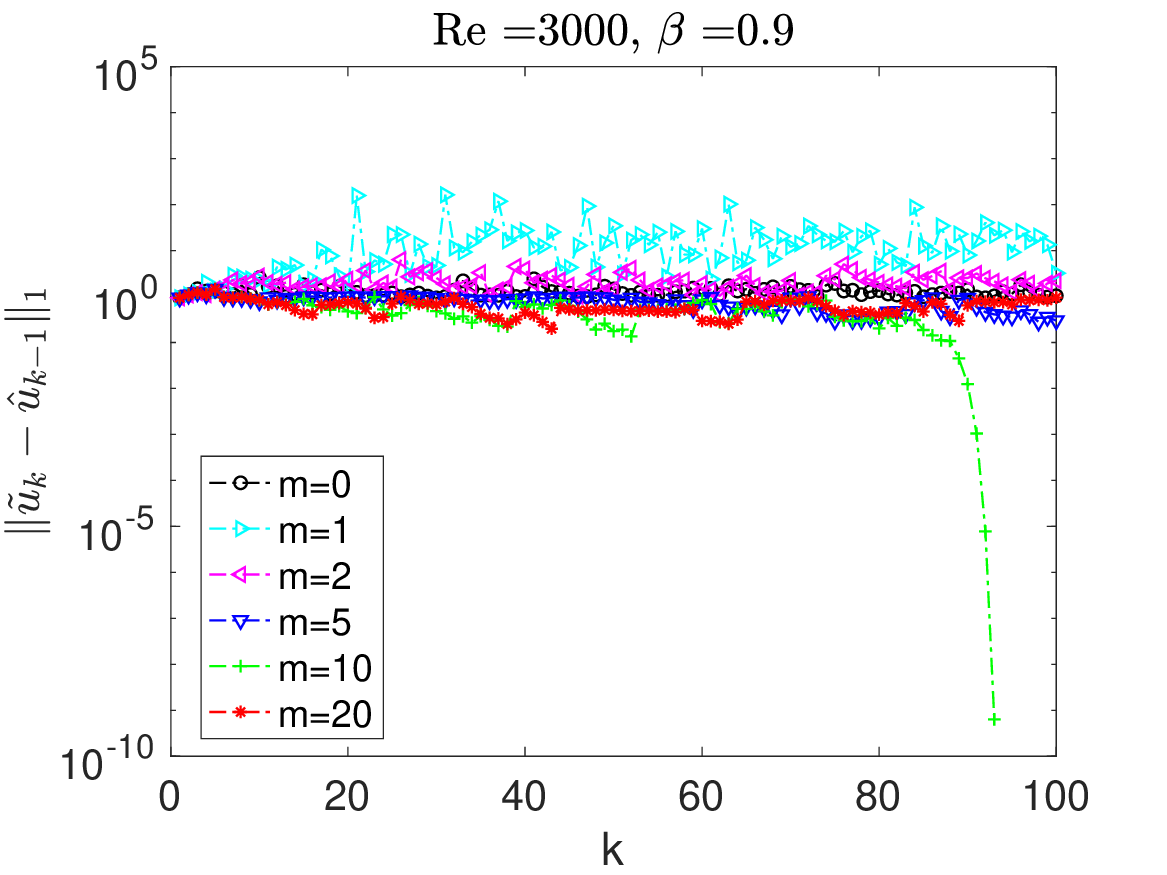}
\caption{\label{fig4}
Shown above are the convergence plots of the 3D lid-driven cavity solved by the AAPicard-Newton method with various $m = 0, 1, 2, 5,10$, and $20$, $Re = 2500$ and $3000$, and relaxation parameter $\beta =1, 0.9$.}
\end{figure}

\subsection{2D channel flow past a cylinder}
Another benchmark problem -- flow past a cylinder is presented here. The domain is a $2.2\times .41 $ rectangle, with a cylinder of radius 0.05 centered at $(0.2,0.2)$ from the bottom left corner of the rectangle. We use Scott-Vogelius elements $(P_2, P_1^{disc})$ on a barycenter mesh with a total of 79,463  dof, see Figure \ref{cylinder1} (top).

No-slip velocity boundary conditions are enforced on the cylinder and walls.  A parabolic profile is enforced nodally to be $u\vert_{\text{in/out}} = \displaystyle\left\langle 6y(0.41-y)/0.41^2, 0 \right\rangle^T $. The initial guess is set to be zero
in the interior and satisfies the boundary conditions. We solve the problem by the AAPicard-Newton method with $\beta_{k+1} \equiv 1, m=20$, and plot the contour and magnitude of the velocity field in Figure \ref{cylinder1} (bottom) for $Re=2500$.  We observe that our plots agree well with the time-averaged streamline in \cite{KT21, LSRT01, LER14}.

We also test the problem with other methods for comparison, such as Newton, Picard,  AAPicard-Newton, and Picard-Newton. Although the AAPicard-Newton method ($m=20$) uses three linear solves (one Picard solve, one optimization solve, and one Newton solve) in each iteration, we observe that it takes 15 iterations (45 linear solves) to reach the tolerance $10^{-10}$. Whereas the Picard-Newton method requires 118 iterations (236 linear solves), the Anderson accelerated Picard method with depth 20 takes 118 iterations (118 linear solves) to converge. However, the Picard and Newton methods do not converge within 150 iterations. It is worth using the AAPicard-Newton method, though it takes three linear solves at each iteration, because it significantly reduces the required number of iterations.
Thus, we conclude the AAPicard-Newton method outperforms the other methods for complicated problems (either complex geometry or high Reynolds number, or both), see Figure \ref{cylinder3}.

\begin{figure}[h!]
\centering
\includegraphics[scale=0.4,viewport=130 90 1200 300, clip]{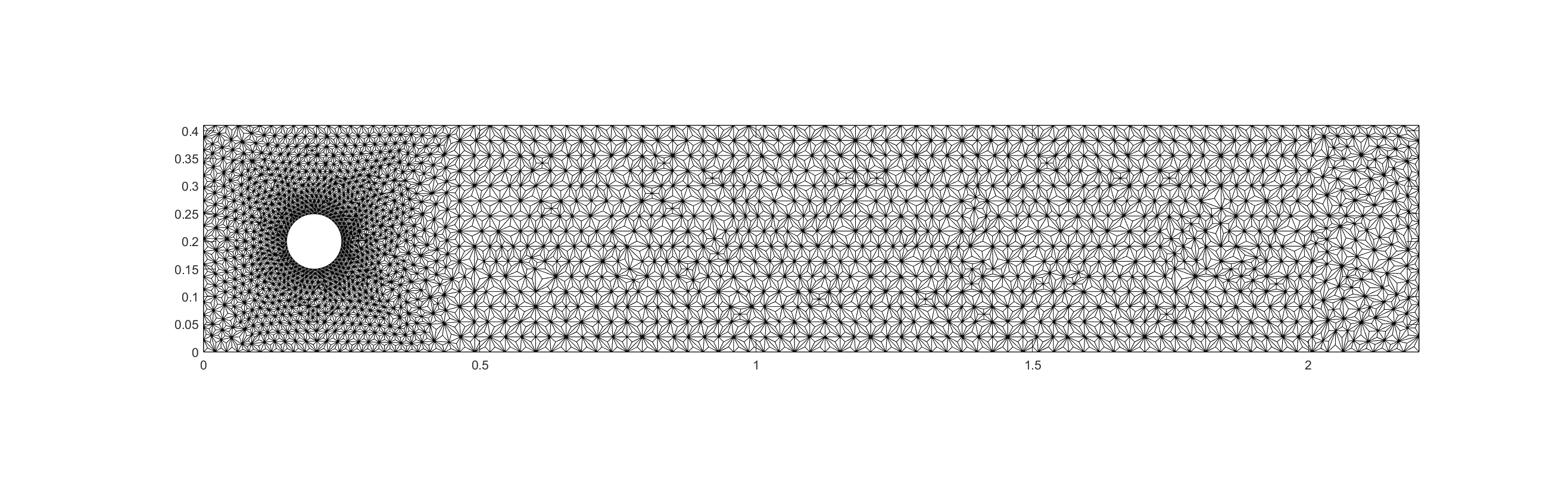}
\includegraphics[scale=0.42,viewport=70 50 1800 320, clip]{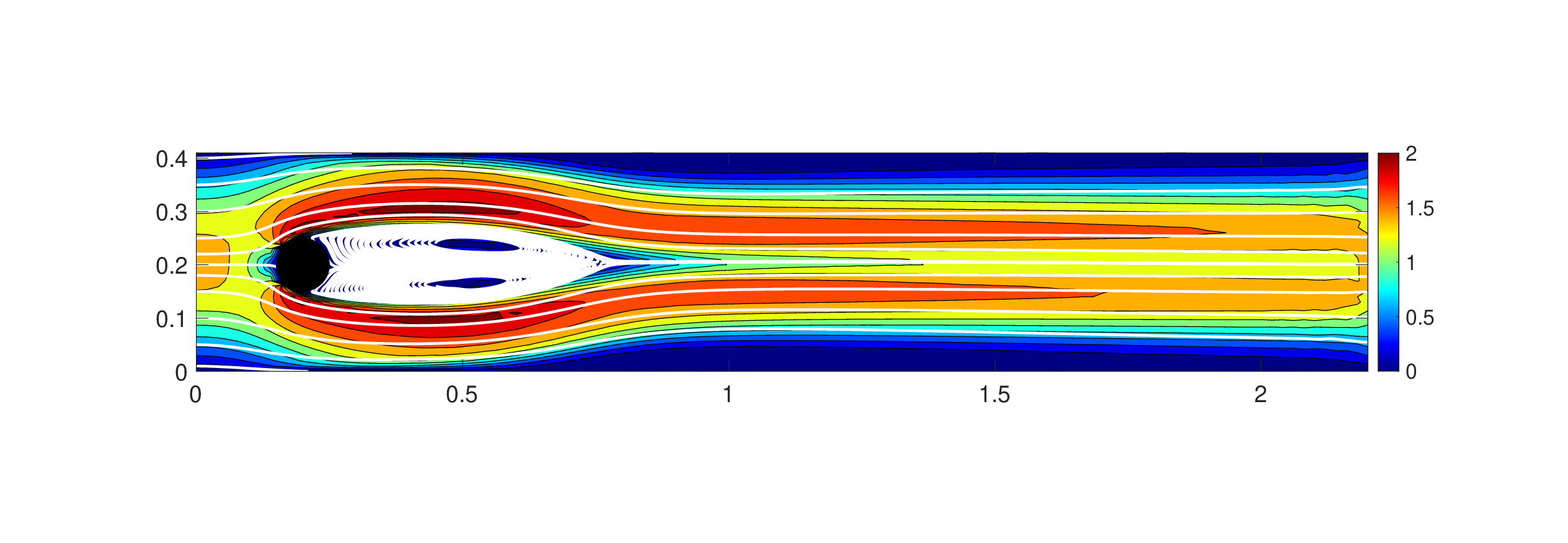}
\caption{\label{cylinder1}
Shown above are an adaptive barycenter mesh (top) and the contour level and magnitude of velocity (bottom) for flow past a cylinder test with $Re = 2500$ solved by the AAPicard-Newton method.}
\end{figure}

\begin{figure}[h!]
\centering
\includegraphics[width = .44\textwidth, height=.42\textwidth,viewport=5  0 550 420, clip]{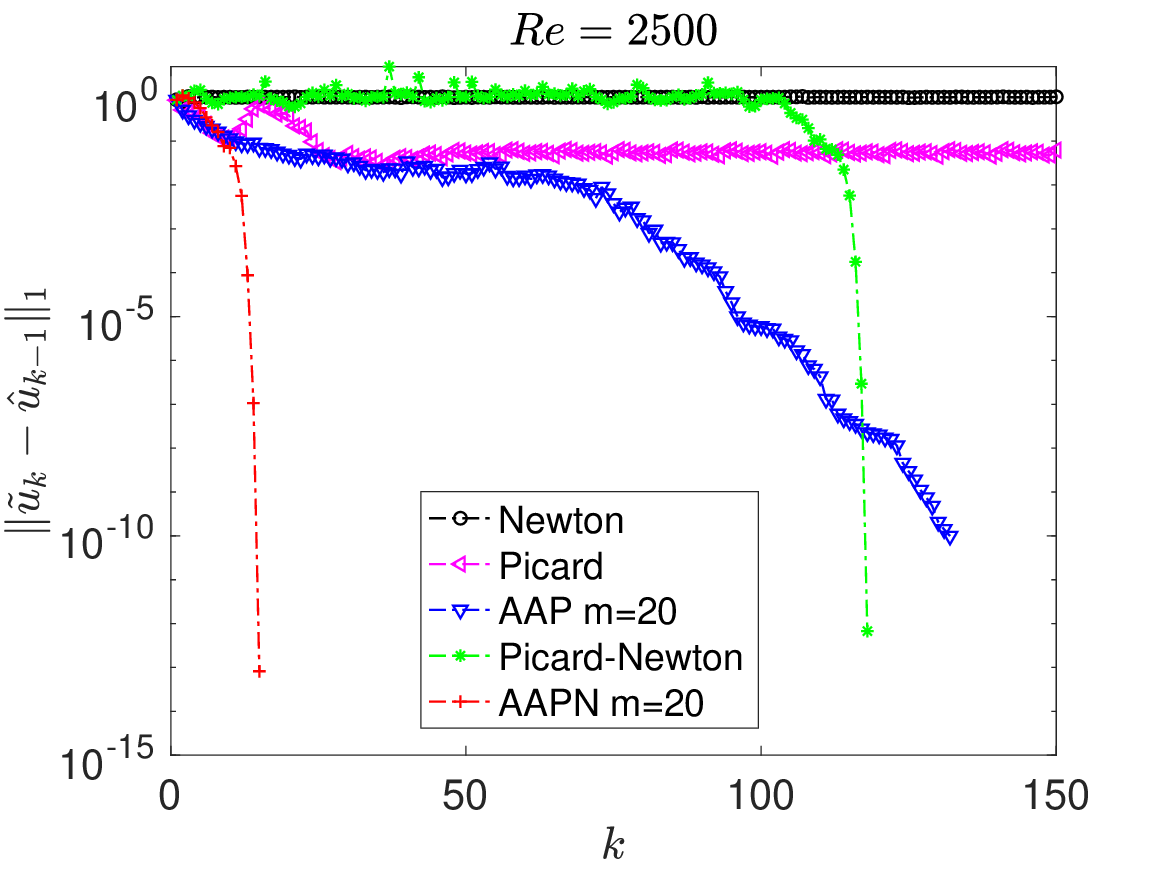}
\caption{\label{cylinder3}
Shown above is the convergence plot for flow past a cylinder test with $Re = 2500$ solved by Newton, Picard, Anderson accelerated Picard (AAP) with depth $m=20$, Picard-Newton method, and the AAPicard-Newton method (AAPN) with depth $m=20$.
}
\end{figure}

\section{Conclusions and future directions}
We proposed an easy-to-implement nonlinear preconditioning technique for Newton's method, aimed at solving the steady Navier-Stokes equations. The AAPicard-Newton method incorporates an Anderson accelerated Picard step in each iteration of Newton's method. This approach maintains quadratic convergence while ensuring global stability when the Anderson relaxation parameter $\beta_{k+1}\equiv  1$,  and a reduced convergence rate when $Re$ is sufficiently large.
Although it takes three linear solves (one Picard solve, one linear solve for optimization, and one Newton solve) in each iteration, it significantly reduces the number of required convergent iterations for higher Reynolds numbers.  
Several benchmark numerical tests demonstrate that this method offers  a much larger domain of convergence compared to the usual Newton’s method. For example, the 2D lid-driven cavity problem converges for $Re \le 20,000$ on a uniform barycenter mesh with 172.5K total dof. This performance surpasses that of the Picard method ($Re \le 4,000$), Newton's method ($Re \le 2,500$), and the Picard-Newton method ($Re \le 12,000$). In the case of the 3D lid-driven cavity, we observe convergence for $Re \le 3,000$ on a uniform barycenter mesh with 796K dof, which is still better than the Picard method ($Re \le 200$), Newton's method ($Re \le 400$), and the Picard-Newton method ($Re \le 1800$).
It is important to note that while increasing the depth $ m $ may lead to longer computational times in the linear solve for the Anderson step (Step 2 of Algorithm \ref{alg:aapnm}), it significantly reduces the number of iterations needed to satisfy the tolerance. When we compare the solve times, Anderson step is significantly cheaper than the Picard step (Step 1) and Newton step (Step 3), see Table \ref{table:comptime} and Table \ref{table:comptime3D}. Therefore, we highly recommend the AAPicard-Newton method with a depth of $m=20$ or so as an effective and optimal choice,  particularly for high Reynolds number problems, or complex geometries.
In the future, we plan to apply the AAPicard-Newton method to other fluid models that involve more complex nonlinearity, such as Bingham's problem and Boussinesq models. We will also explore whether the mesh size affects the behavior of AAPicard-Newton methods.

\bibliographystyle{plain}
\bibliography{graddiv}

\end{document}